\theoremstyle{plain}
\newtheorem{mainthm}{Theorem}
\newtheorem{thm}{Theorem}[subsection]
\newtheorem{lem}[thm]{Lemma}
\newtheorem{prop}[thm]{Proposition}
\newtheorem*{cnj-nonum}{Conjecture}
\theoremstyle{definition}
\newtheorem{dfn}[thm]{Definition}
\newtheorem*{claim-nonum}{Claim}
\theoremstyle{remark}
\newtheorem{rem}[thm]{Remark}
\newtheorem*{remnonum}{Remark}
\newtheorem{rems}[thm]{Remarks}
\theoremstyle{plain}
\newcommand{\Qed}{\hfill \qedsymbol \medskip}
\newcommand{\id}{\textnormal{id}}
\newcommand{\R}{\mathbb{R}}
\newcommand{\fuk}{\mathcal{F}uk}
\newcommand{\thmb}{T^{^{\uparrow}}}
\newcommand{\tthmb}{\mathcal{T}^{^{\uparrow}}}
\newcommand{\vbl}{L}
\newcommand{\ocnote}[1]{#1}
\newcommand{\pbaddress}{biran@math.ethz.ch}
\newcommand{\ocaddress}{cornea@dms.umontreal.ca}
\begin{document}

\title{Bounds on the Lagrangian spectral metric in cotangent bundles}

\date{\today}

\thanks{The second author was supported by an individual NSERC
  Discovery grant.}

\author{Paul Biran and Octav Cornea}

\address{Paul Biran, Department of Mathematics, ETH-Z\"{u}rich,
  R\"{a}mistrasse 101, 8092 Z\"{u}rich, Switzerland}
\email{\pbaddress}
 
\address{Octav Cornea, Department of Mathematics and Statistics,
  University of Montreal, C.P. 6128 Succ.  Centre-Ville Montreal, QC
  H3C 3J7, Canada} \email{\ocaddress}

\bibliographystyle{alphanum}

%

%

\begin{abstract}
  
Let $N$ be a closed manifold and $U \subset T^*(N)$ a bounded domain in
the cotangent bundle of $N$, containing the zero-section. A conjecture
due to Viterbo asserts that the spectral metric for Lagrangian
submanifolds that are exact-isotopic to the zero-section is
bounded. In this paper we establish an upper bound on the spectral
distance between two such Lagrangians $L_0, L_1$, which depends
linearly on the boundary depth of the Floer complexes of $(L_0, F)$
and $(L_1, F)$, where $F$ is a fiber of the cotangent bundle.

\end{abstract}

\maketitle

%
%




\section{Introduction and main results} \label{s:intro}

Let $N$ be a closed manifold and $T^*(N)$ its cotangent bundle,
endowed with its standard symplectic structure. A domain
$U \subset T^*(N)$ is called bounded if there is a Riemannian metric
$g$ on $N$ such that $U$ is contained inside the unit-ball cotangent
bundle of $T^*(N)$ with respect to the metric associated to $g$ on the
fibers of $T^*(N)$. More specifically,
$U \subset \{ v \in T^*(N) \mid |v| \leq 1\}$, where $| \cdot |$ is
the norm on the fibers of $T^*(N)$ corresponding to the metric $g$ via
the isomorphism $T^*(N) \cong T(N)$ induced by $g$. Since $N$ is
compact, the boundedness of $U$ is independent of the choice of $g$.

For a domain $W \subset T^*(N)$ we denote by
$\mathcal{L}_{\text{ex}}(W)$ the collection of closed exact Lagrangian
submanifolds of $W$ (exactness is considered here with respect to the
canonical Liouville form) and by
$\mathcal{L}_{\text{ex}, N}(W) \subset \mathcal{L}_{\text{ex}}(W)$ the
collection of Lagrangians that are exact isotopic (within $T^*(N)$) to
the zero section $N \subset T^*(N)$.

There are several $\textnormal{Ham}$-invariant metrics on
$\mathcal{L}_{\text{ex},N}(T^*(N))$. For example, the Hofer metric on
$\textnormal{Ham}(T^*(N))$ descends to a non-degenerate metric
$d_{\text{Hof}}$ on $\mathcal{L}_{\text{ex}, N}(T^*(N))$. Another
important metric, due to Viterbo, is the spectral metric. This was
originally defined for $\mathcal{L}_{\text{ex}, N}(T^*(N))$, but
thanks to more recent developments can be extended to the entire of
$\mathcal{L}_{ex}(T^*(N))$. (See
Remark~\ref{r:spec-choices}~-~(\ref{i:extend-metric}) and
Remark~\ref{r:exact-lags}~-~(\ref{i:FSS-exact-lags}) for more on
this.) The spectral distance $\gamma(L_0, L_1)$ between two elements
$L_0, L_1 \in \mathcal{L}_{ex}(T^*(N))$ is define as:
\begin{equation} \label{eq:gamma} \gamma(L_0, L_1) = c([N]; L_0,L_1) -
  c([\textnormal{pt}]; L_0,L_1),
\end{equation}
where $c([N]; L_0,L_1)$, $c([\textnormal{pt}]; L_0, L_1)$ stand for
the spectral invariants associated to $(L_0, L_1)$, for the
fundamental class $[N] \in H_n(N)$ and for the class of a point
$[\textnormal{pt}] \in H_0(N)$,
correspondingly. See~\S\ref{s:spectral-1} (and more
specifically~\S\ref{sb:fil-HF} and~\S\ref{sbsb:action-filtrations})
below for the precise definitions.

It is well known that $\gamma(L_0, L_1) \leq d_{\text{Hof}}(L_0,L_1)$
for all $L_0, L_1 \in \mathcal{L}_{\text{ex}, N}(T^*(N))$. However
beyond this inequality, little is known about the relation between
these two metrics.

Let $U \subset T^*(N)$ be a bounded domain. It is also known that, at
least for some $N$'s, the Hofer metric on
$\mathcal{L}_{\text{ex}, N}(U)$ is unbounded. This has been proved for
several cases like $N=S^1$ by Khanevsky~\cite{Kh:diameters} and is
conjectured to hold for all $N$'s.

In contrast to the Hofer metric, there is the following conjecture
regarding the spectral metric:
\begin{cnj-nonum}[Viterbo] \label{p:vit-conj}
  The spectral metric on $\mathcal{L}_{\text{ex},N}(U)$ is bounded.
\end{cnj-nonum}
This was conjectured by Viterbo in~\cite{Vi:homogen} for the case
$N=\mathbb{T}^n$, and is expected to hold for all closed manifolds
$N$. Recently Shelukhin~\cite{Shl:vit-1, Shl:vit-2} proved this
conjecture for several classes of manifolds $N$ (including
$\mathbb{T}^n$).

Our main result, which applies to all closed manifolds $N$, is the
following.
\begin{mainthm} \label{t:main-a} Let $N$ be any closed manifold and
  $U \subset T^*(N)$ a bounded domain. There exist constants $A, B >0$
  that depend only on $U$ such that for every
  $L_0, L_1 \in \mathcal{L}_{\text{ex},N}(U)$ we have:
  \begin{equation} \label{eq:bound-main-thm} \gamma(L_0, L_1) \leq B
    \bigl( \beta(CF(L_0, F_q)) + \beta(CF(L_1, F_q)) \bigr) + A.
  \end{equation}
  Here $F_q = T_q^*(N)$ is the fiber of the cotangent bundle at an
  arbitrary point $q \in N$, viewed as a Lagrangian submanifold of
  $T^*(N)$, and $\beta(CF(L_i, F_q))$ is the boundary depth of the
  Floer complex of the pair $(L_i, F_q)$, $i=0,1$, defined with
  coefficients in $\mathbb{Z}_2$. See~\S\ref{sb:filtered} for the
  definition of boundary depth.
\end{mainthm}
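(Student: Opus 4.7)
The plan is to view $\gamma(L_0, L_1)$ as the spread between two infinite bars in the barcode of the filtered complex $CF(L_0, L_1)$, and to bound this spread by passing through the fiber $F_q$ using the triangle product in the Fukaya category of $T^*(N)$. The central tool is the filtered chain map
\begin{equation*}
  \mu_2 : CF(L_0, F_q) \otimes CF(F_q, L_1) \longrightarrow CF(L_0, L_1),
\end{equation*}
which respects action filtrations up to a universal shift $A_0 = A_0(U) > 0$. This shift comes from an a priori confinement of the contributing Floer triangles to a bounded enlargement of $U$, ensured by a fibered-at-infinity choice of almost complex structure outside $U$. Since each $L_i$ is exact-isotopic to $N$ and $N \cap F_q$ consists of a single point, $HF(L_i, F_q) \cong \mathbb{Z}_2$, so the barcode of $CF(L_i, F_q)$ has one infinite bar and finite bars of length at most $\beta(CF(L_i, F_q))$.

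To bound $c([\textnormal{pt}]; L_0, L_1)$, I would pick representatives $e_i \in CF(L_i, F_q)$ of the unique homology generator. The cycle $\mu_2(e_0 \otimes e_1) \in CF(L_0, L_1)$ represents the point class under the PSS isomorphism $HF(L_0, L_1) \cong H(N)$, by a local model computation near $q$, so $c([\textnormal{pt}]; L_0, L_1) \leq \ell(e_0) + \ell(e_1) + A_0$. To bound $c([N]; L_0, L_1)$ I would use Poincar\'{e} duality in Lagrangian Floer theory, relating $c([N]; L_0, L_1)$ to $-c([\textnormal{pt}]; L_1, L_0)$, together with the same product bound applied to dual generators $e_i' \in CF(F_q, L_i)$. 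The quantitative content is that the sums $\ell(e_i) + \ell(e_i')$ are controlled by the boundary depth $\beta(CF(L_i, F_q))$ up to universal constants depending only on $U$, since these sums measure the action-width of the acyclic part of the same filtered complex viewed through the duality pairing.

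Subtracting the two bounds, the terms $\ell(e_i), \ell(e_i')$ cancel modulo the boundary depths, yielding
\begin{equation*}
  \gamma(L_0, L_1) \leq B \bigl( \beta(CF(L_0, F_q)) + \beta(CF(L_1, F_q)) \bigr) + A,
\end{equation*}
with $A, B$ depending only on $U$. The main obstacle I foresee is the chain-level realization of Poincar\'{e} duality and the precise accounting of how boundary depth controls the action discrepancy between cycle and cocycle representatives of the one-dimensional homology generators; this is where the quantitative content of the bound resides. The more routine ingredients are verifying that $\mu_2$ is a filtered chain map with universal shift $A_0$ (requiring an adapted, fibered almost complex structure on $T^*(N) \setminus U$) and identifying $\mu_2(e_0 \otimes e_1)$ with the point class via a local computation near $q \in N$.
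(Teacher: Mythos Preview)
Your approach has a fundamental gap: the inequalities you obtain from the triangle product go in the \emph{wrong direction} for bounding $\gamma$ from above. The product $\mu_2: CF(L_0,F_q)\otimes CF(F_q,L_1)\to CF(L_0,L_1)$ only gives an \emph{upper} bound on $c([\textnormal{pt}];L_0,L_1)$ (by exhibiting one cycle), and Poincar\'e duality $c([N];L_0,L_1)=-c([\textnormal{pt}];L_1,L_0)$ combined with the same product bound gives a \emph{lower} bound on $c([N];L_0,L_1)$. Subtracting produces $\gamma(L_0,L_1)\geq -2A_0$, which is trivial. To bound $\gamma$ from above you would need an upper bound on $c([N])$ and a lower bound on $c([\textnormal{pt}])$, and neither is accessible through a single fiber: the image of $\mu_2$ through $F_q$ is one-dimensional and hits only the point class, never $[N]$. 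Your claim that $\ell(e_i)+\ell(e_i')$ is ``controlled by the boundary depth'' is also off: filtered Poincar\'e duality gives $\sigma(CF(L_i,F_q))+\sigma(CF(F_q,L_i))=0$ exactly (or a constant depending only on markings), with no boundary-depth contribution, so even if the directions worked out, $\beta$ would not appear in your final estimate.

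The paper takes a genuinely different route. It first reduces by the triangle inequality to bounding $\gamma(L,N)\leq\rho(CF(L,N))$, and then controls the entire spectral range $\rho(CF(L,N))$ at once. The mechanism is a \emph{filtered iterated cone decomposition} of the Yoneda module of $N$ in terms of \emph{several} fibers $F_{q_1},\ldots,F_{q_k}$ (one per critical point of a Morse function on $N$), realised geometrically by embedding a neighbourhood of $N$ into a real Lefschetz fibration and invoking the Seidel exact triangle iteratively. Evaluating on $L$ expresses $CF(L,N)$ as an iterated cone whose only $L$-dependent factors are the complexes $CF(L,F_{q_i})$. A purely algebraic lemma then bounds $\rho$ of an iterated cone by a linear expression in the spectral ranges and boundary depths of its factors; since $HF(L,F_{q_i})\cong\mathbb{Z}_2$ the spectral ranges of those factors vanish, and only their boundary depths survive. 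This is where $\beta(CF(L,F_q))$ actually enters. In short: a single $\mu_2$ through one fiber sees only $[\textnormal{pt}]$, whereas the cone decomposition through many fibers captures all of $CF(L,N)$, and only the latter yields the two-sided control on spectral invariants that the theorem requires.
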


\begin{rems} \label{r:main-thm}
  \begin{enumerate}
  \item \label{i:beta-vit} Clearly, the above conjecture of Viterbo
    would follow from Theorem~\ref{t:main-a} if we can show that the
    boundary depth $\beta(CF(L, F_q))$ is uniformly bounded in
    $L \in \mathcal{L}_{\text{ex},N}(U)$.
  \item \label{i:vit-beta} \ocnote{The converse to the statement made
      at point~(\ref{i:beta-vit}) above turns out to be also true.
      Namely, if the conjecture of Viterbo holds true then the
      boundary depth $\beta(CF(L, F_q))$ is uniformly bounded in
      $L \in \mathcal{L}_{\text{ex},N}(U)$. This follows by a
      relatively simple argument that we summarize
      in~\S\ref{sb:specm-betta-bound}.}
  \item The chain complex $CF(F_q, L_i)$ depends on the point
    $q \in N$ and so does its boundary depth $\beta(CF(L_i,
    F_q))$. However, as we will see in~\S\ref{s:prf-main},
    Lemma~\ref{l:diff-fibers}, the difference
    $|\beta(CF(L, F_{q'})) - \beta(CF(L, F_{q''}))|$ is bounded,
    uniformly in $q', q'' \in N$,
    $L \in \mathcal{L}_{\text{ex},N}(U)$. Therefore the formulation of
    inequality~\eqref{eq:bound-main-thm} with constants $A, B$ that do
    not depend on $q$, makes sense.
  \item While the chain complex $CF(L, F_q)$ might be complicated and
    have arbitrary large rank, its homology is very simple:
    $HF(L, F_q) \cong \mathbb{Z}_2$ for every $q \in N$,
    $L \in \mathcal{L}_{\text{ex},N}(U)$.
  \end{enumerate}
\end{rems}


\subsection{Strategy and main ideas in the proof} \label{sb:main-i}
The starting point of the proof is borrowed from~\cite{FSS:ex} - we
embed a tubular neighborhood $\mathcal{U}$ of the zero section of
$T^*(N)$ into a real affine algebraic manifold $E$ which also serves
as the total space of a Lefschetz fibration
$\pi: E \longrightarrow \mathbb{C}$ endowed with a real structure. The
embedding can be arranged such that the zero section is sent to (one
of the components of) the real part of $E$.

The 2'nd step appeals to our previous work~\cite{Bi-Co:lefcob-pub}
which establishes canonical presentations of Lagrangians $K$ in
Lefschetz fibrations as iterated cone decompositions with standard
factors. These iterated cone decompositions take place in the category
of modules over the Fukaya category of $E$ and hold up to
quasi-isomorphisms. The factors in the decomposition of $K$ consist of
the Yoneda modules of certain Lefschetz thimbles emanating from the
critical points of $\pi$ along $N$, as well as some factors that
involve the Floer complexes of pairs of thimbles and pairs of the type
$(\text{Thimble}, K)$. This makes it possible to express $CF(L,K)$ for
every exact Lagrangians $L$, as an iterated cone involving chain
complexes of the types $CF(L, \text{Thimble})$, $CF(\text{Thimble},K)$
and $CF$ of pairs of thimbles. Note that the 2'nd and 3'rd types do
not involve $L$.

By specializing to the case $K=N$ and taking the $L$ to correspond to
a Lagrangian in the neighborhood $\mathcal{U}$ of the zero-section,
the previous cone decomposition of $CF(L,N)$ reduces now to terms of
the type $CF(L,F_{q})$, for different critical points $q \in N$ of
$\pi$, and some other fixed chain complexes that do not depend on
$L$. The terms $F_q$ appear here because the previously mentioned
thimbles coincide within $\mathcal{U}$ with the fibers $F_q$ of the
cotangent bundle. A ``local to global'' argument in Floer theory shows
that replacing the thimble emanating from a critical point $q \in N$
of $\pi$ by $F_q$ does not change the respective Floer complexes.

The next step is to analyze the spectral metric using the above cone
decompositions. This requires a refinement of the cone decomposition
in the realm of filtered Floer theory. It turns out that the above
cone decomposition continues to hold in the filtered sense up to a
bounded action shift. Therefore, in principle once can recover (up to
a bounded shift) the filtered Floer homology of $(L,N)$ from the
filtered Floer homology of the factors mentioned above and the
knowledge of the chain maps between the factors which form the
cones. In practice this not so effective, as these chain maps are in
general hard to describe explicitly. Fortunately, this obstacle can be
overcome by algebraic means which are described next.

The next step in the proof is purely algebraic. Here we obtain a
coarse uniform upper bounds on the spectral range of filtered mapping
cones $C = \text{Cone}(C' \xrightarrow{\; f \;} C'')$ between two
filtered chain complexes $C'$ and $C''$. By ``spectral range'' of a
filtered chain complex we mean the difference between the highest and
the lowest spectral invariants of that complex. It turns out that one
can derive such a bound on the spectral range of $C$ which involves
only the following pieces of data: the spectral ranges of $C'$ and
$C''$, the boundary depths of $C'$ and $C''$ and the amount of
filtration shift in the map $f$. A crucial point here is that our
bound is uniform in $f$ in the sense that it does not involve specific
information on the map $f$, except of the extent by which it shifts
the filtrations.  We also establish an analogous upper bound for the
boundary depth of $C$.  Having these two algebraic ingredients at
hand, we can derive similar upper bounds for the spectral range and
boundary depth of iterated cones.

The final step puts the geometry and algebra together. We apply the
algebraic estimates on the spectral range to the previously mentioned
cone decomposition of $CF(L,N)$. While it is possible to describe relatively precisely 
the chain maps between the terms in this decomposition, this is delicate. Fortunately, this is not
needed here as we can easily bound the amount by which these maps shift filtrations. Consequently we
obtain an upper bound on the spectral range of $CF(L,N)$ as the sum of
two terms: one of them is a constant $A$ that comes from the spectral
ranges of the factors in our cone decomposition (these are
straightforward to determine) and some uniformly bounded errors that
come from our coarse estimates. This constant depends on $\mathcal{U}$
but {\em not} on $L$ since the only appearance of $L$ in the cone
decomposition of $CF(L,N)$ is in terms of the type
$CF(L,F_q)$. However, the spectral range of such terms is $0$ because
$HF(L, F_q)$ is $1$-dimensional. The second summand in our bound looks
like $B \beta(CF(L,F_q))$, where $B$ is a constant and
$\beta(CF(L,F_q))$ is the boundary depth of $CF(L,F_q)$. Our main
result now easily follows from these bounds.

The above is only an outline of the main ideas in the proof. Along the
way there are several additional ingredients required for the proof to
work. These have to do with technicalities in Floer theory, Lefschetz
fibrations and filtered homological algebra.

\subsection{Organization of the paper} \label{sb:org-p} The rest of
the paper is organized as follows. Section~\ref{s:spectral-1} reviews
necessary preliminaries on filtered Floer theory in the framework of
exact Lagrangian submanifolds in Liouville manifolds. We also prove in
Section~\ref{sb:Fl-subdomains} a general ``local vs.~global'' result,
comparing the Lagrangian Floer persistent homologies in a Liouville
subdomain with the same type of homology in the entire Liouville
manifold.

Section~\ref{s:lef} is devoted to Lefschetz fibrations and their
relevance to our problem. We go over real Lefschetz fibrations in
general and then review a construction from~\cite{FSS:ex} which gives
an embedding of a neighborhood of the zero-section in $T^*(N)$ into a
real Lefschetz fibration $E$. We then go over a construction coming
from~\cite{Bi-Co:lefcob-pub} which alters the Lefschetz fibration $E$
into an an extended Lefschetz fibration $E'$ containing a collection
of matching spheres that will be useful for our purposes. Part of this
section is devoted to showing that the construction of $E'$ can be
made while preserving a geometric setting amenable to Floer theory
like exactness etc.

Section~\ref{s:Fl-E-E'} is dedicated to comparison between the
filtered Floer theory inside $E$ and the same theory viewed in $E'$.
In particular we show there that the matching spheres from $E'$,
constructed in Section~\ref{s:lef}, correspond in $E$ to some
Lefschetz thimbles emanating from $N$. These in turn coincide near $N$
with cotangent fibers of $T^*(N)$. We show that these correspondences
hold also in a Floer-theoretic sense.

Section~\ref{s:cone-decomp} is central for the proof of the main
theorem. There we discuss iterated cone decompositions in the Fukaya
categories of $E$ and $E'$. In particular we show how to represent
Lagrangian submanifolds in $E'$ as iterated cones with standard terms
coming from the matching spheres from Section~\ref{s:lef}. Moreover,
in Section~\ref{sb:filt-cd-Lef} and~\ref{sb:prfs-filt-cd} we extend
these decompositions to the realm of Fukaya categories endowed with
action filtrations. In particular we also derive a filtered version of
the Seidel exact triangle associated to a Dehn-twist.

Section~\ref{s:prf-main} combines the geometric contents of the
previous sections together with some filtered homological algebra
(developed in Section~\ref{s:fhomalg}) to conclude the proof of the
main theorem. We also sketch the argument for the converse.

The algebraic ingredients necessary for the paper are concentrated in
Section~\ref{s:fhomalg}. This is a purely algebraic section in which
we study spectral invariants and boundary depth of filtered chain
complexes. Special attention is given to filtered mapping cones and we
establish estimates on the spectral range and boundary depth in that
case.

The paper can be read linearly, with the exception of
Section~\ref{s:fhomalg} which is the last one, but is being referred
to at many instances along the paper. At the same time,
Section~\ref{s:fhomalg} is independent of the rest the paper and can
be read separately.

\subsection{Acknowledgments} \label{sb:ack} \ocnote{We thank Egor
  Shelukhin for suggesting this project to us and also pointed out the
  relevance of \cite{FSS:ex} in this context.}  This work was
initiated and partially carried out during our two weeks stay at the
Mathematical Research Institute of Oberwolfach in May 2017, in the
framework of the Research in Pairs program. We would like to thank the
Oberwolfach Institute for the wonderful hospitality and working
conditions during our visit. We would like to thank Sobhan Seyfaddini
for useful discussions related to~\S\ref{sb:specm-betta-bound}.

\tableofcontents 

\section{Lagrangian Floer theory and spectral invariants}
\label{s:spectral-1}

Here we briefly recall the definitions of spectral invariants,
boundary depth and the spectral metric on the space of Lagrangian
submanifolds. We refer the reader to~\cite{PRSZ:top-pers,
  Po-Sh-St:pers-op, Oh:symp-top-act-1, DKM:spectral, KMN:spec,
  Leclercq:spectral, Le-Za:spec-inv, Usher-Zhang:perh, Usher1, Usher2,
  Vi:generating-functions-1} for more details on the general theory of
these concepts.

\subsection{Filtered chain complexes and their
  invariants} \label{sb:filtered} Fix a unital ring $R$ and let $C$ be
a chain complex of $R$-modules.  By a filtration on $C$ we mean an
increasing filtration of subcomplexes of $R$-modules, indexed by the
real numbers. More specifically, for every $\alpha \in \mathbb{R}$ we
are given a subcomplex $C^{\leq \alpha} \subset C$ of $R$-modules and
for every $\alpha \leq \beta$ we have
$C^{\leq \alpha} \subset C^{\leq \beta}$.  For simplicity we will
assume from now on that the filtration on $C$ is exhaustive, i.e.
$\cup_{\alpha \in \mathbb{R}} C^{\leq \alpha} = C$.

The inclusions $C^{\leq \alpha} \subset C^{\leq \beta}$,
$\alpha \leq \beta$, and $C^{\leq \alpha} \subset C$ induce maps in
homology which we denote by:
$$i^{\beta, \alpha}: H_*(C^{\leq \alpha}) \longrightarrow H_*(C^{\leq
  \beta}), \quad i^{\alpha}: H_*(C^{\leq \alpha}) \longrightarrow
H_*(C).$$

Given a homology class $a \in H_*(C)$ we define its spectral invariant
$\sigma(a) \in \mathbb{R} \cup \{-\infty\}$ to be
\begin{equation} \label{eq:sig-a}
  \sigma(a) := \inf \{ \alpha \in \mathbb{R} \mid  a \in
  \textnormal{image\,} i^{\alpha} \}.
\end{equation}
Note that $\sigma(0) = -\infty$.

Another important measurement for our purposes is the boundary depth
$\beta(C)$ of a filtered chain complex $C$, which is defined as
follows:
$$\beta(C) := \inf \{ r \geq 0 \mid \, \forall \alpha,
\forall c \in C^{\leq \alpha} \text{ which is a boundary in } C,
\exists \, b \in C^{\leq \alpha + r} \text{ s.t. } c = d(b)\}.$$

We will elaborate more on spectral invariants, boundary depth and
other measurements of filtered chain complexes in~\S\ref{s:fhomalg}.

\subsection{Filtered Lagrangian Floer theory} \label{sb:fil-HF}

In what follows all symplectic manifolds and their Lagrangian
submanifolds will be implicitly assumed to be connected, unless
otherwise mentioned. And all Hamiltonian functions
$[0,1] \times W \longrightarrow \mathbb{R}$ will be implicitly assumed
to be compactly supported.

\subsubsection{Liouville and Stein manifolds} \label{sbsb:LiSt} In the
following we will be mainly concerned with symplectic manifolds of two
types: Liouville domains and manifolds that are Stein at infinity. We
refer the reader to~\cite{El-Ci:Weinstein-book} for the foundations of
the theory of such manifolds and much more. Below we briefly recall
the basic notions needed for our purposes.

A compact Liouville domain $(W, \omega = d\lambda)$ consists of a
compact manifold $W$ with boundary $\partial W$ and an exact
symplectic structure $\omega$, with a given primitive $1$-form
$\lambda$ (called the Liouville form) such that the following holds:
the Liouville vector field $X_{\lambda}$, defined by
$i_{X_{\lambda}} \omega = \lambda$, is outward transverse to
$\partial W$. Under this assumption the restriction
$\lambda_{\partial W} := \lambda|_{\partial W}$ is a contact form and
we denote by $\xi_{\lambda} := \ker \lambda_{\partial W}$ the contact
structure defined by $\lambda_{\partial W}$ on $\partial W$. We write
$\psi_t: W \longrightarrow W$, $t \leq 0$, for the flow of
$X_{\lambda}$ (which exists for all $t \leq 0$). We have
$\psi_t^* \lambda = e^t \lambda$ and $\psi_t^*\omega = e^t \omega$.

For a Liouville domain $(W, \omega = d\lambda)$, consider the
embedding $\Psi: (-\infty, 0] \times \partial W \longrightarrow W$,
$(s, x) \longmapsto \psi_s(x)$. We have:
$\Psi^*\lambda = e^s \lambda_{\partial W}$,
$\Psi^* \omega = d(e^s \lambda_{\partial W})$. Define an almost
complex structure \label{p:J-lambda-1} $J^{\lambda}$ on
$(-\infty, 0] \times \partial W$ as follows. Fix an almost complex
structure $J_{\xi_{\lambda}}$ on $\xi_{\lambda}$ which is compatible
with $\omega|_{\xi_{\lambda}}$.  Denote by
$R_{\lambda_{\partial W}} \in T(\partial W)$ the Reeb vector field
corresponding to $\lambda_{\partial W}$. Define
$J^{\lambda}|_{\xi_{\lambda}} := J_{\xi_{\lambda}}$ and
$J^{\lambda}(\tfrac{\partial}{\partial s}) := R$. Note $J^{\lambda}$
is compatible with $\Psi^*\omega$ and moreover the function
$\phi: (-\infty, 0] \times \partial W \longrightarrow \mathbb{R}$,
$\phi(s,x) := e^s$, is a potential for $\Psi^*\omega$, i.e.~
$\Psi^* \omega = -d d^{J^{\lambda}} \phi$ (in fact we have
$d^{J^{\lambda}} \phi = - e^s\lambda$). In particular, $\phi$ is
$J$-plurisubharmonic (or $J$-convex). Using the map $\Psi$ we can
endow $\textnormal{image}(\Psi)$ with the almost complex structure
$\Psi_*(J^{\lambda})$ which, by abuse of notation, will also be
denoted by $J^{\lambda}$. (Note that in general $J^{\lambda}$ does not
extend from $\textnormal{image}(\Psi)$ to the entire of $W$.)


Sometimes it will be useful to work with the completion
$(\widehat{W}, \widehat{\omega} = d\widehat{\lambda})$ of a compact
Liouville domain $(W, \omega = d\lambda)$. More precisely, set
$$\widehat{W} := W \cup_{\Psi}
\bigl( [-\epsilon,\infty) \times \partial W \bigr),$$ where the gluing
identifies $[-\epsilon, 0] \times \partial W$ with a collar
neighborhood of $\partial W$ in $W$ via the map $\Psi$. The Liouville
form $\widehat{\lambda}$ is defined by extending $\lambda$ from $W$ to
the cylindrical part $[0,\infty) \times \partial W$ by
$\widehat{\lambda} = e^s \lambda_{\partial W}$, where
$s \in [0,\infty)$. We denote the corresponding symplectic structure
by $\widehat{\omega} := d \widehat{\lambda}$.

All the previous structures, like $X_{\lambda}$, $\psi_t$, $\phi$ and
$J^{\lambda}$, extend in an obvious way to the completion. More
specifically, the Liouville vector field $X_{\widehat{\lambda}}$
(defined by
$i_{X_{\widehat{\lambda}}} \widehat{\omega} = \widehat{\lambda}$)
extends $X_{\lambda}$ by $\tfrac{\partial}{\partial s}$ along the
cylindrical part. We denote the flow of $X_{\widehat{\lambda}}$ by
$\widehat{\psi}_t$. Note that this flow is complete (i.e.~exists for
all times $t$, both positive and negative). Next, we extend the almost
complex structure $J^{\lambda}$ from $\textnormal{image\,}\Psi$ to an
almost complex structure $\widehat{J}^{\lambda}$ on
$(\textnormal{image\,}\Psi) \cup_{\Psi} \bigl( [-\epsilon,\infty)
\times \partial W \bigr) \subset \widehat{W}$ by the same recipe
defining $J^{\lambda}$, namely: $\widehat{J}^{\lambda} := J^{\lambda}$
on $\textnormal{image\,}\Psi$, and
$\widehat{J}^{\lambda}_{(s,x)}|_{\xi_{\lambda}} := J_{\xi_{\lambda}}$,
$J^{\lambda}_{(s,x)}(\tfrac{\partial}{\partial s}) :=
R_{\lambda_{\partial W}}$, for every
$(s,x) \in [0,\infty) \times \partial W$ (where here we view
$\xi_{\lambda} \subset T_{(s,x)}( s \times \partial W)$) . Finally
note that the plurisubharmonic function
$\phi: \textnormal{image\,}\Psi \longrightarrow \mathbb{R}$ extends to
the cylindrical part $[0,\infty) \times \partial W$ by
$\widehat{\phi}(s,x) = e^s$ and
$\widehat{\lambda} = -d^{\widehat{J}^{\lambda}} \widehat{\phi}$,
$\widehat{\omega} = -dd^{\widehat{J}^{\lambda}} \widehat{\phi}$.


Another type of symplectic manifolds that we will encounter are Stein
manifolds, which are very much related to the above. By a Stein
manifold we mean a triple $(V, J_V, \varphi)$, where $(V, J_V)$ is an
open complex manifold (with integrable $J_V$) and
$\varphi: V \longrightarrow \mathbb{R}$ is an exhaustion
plurisubharmonic function. Exhaustion means that $\varphi$ is proper
and bounded from below, and plurisubharmonic means that the $2$-form
$\omega_{\varphi} := -dd^{J_V} \varphi$ is compatible with $J_V$
(i.e.~$\omega_{\varphi}(u, J_Vu)>0$, $\forall u$ and
$\omega_{\varphi}(J_V u, J_V v) = \omega_{\varphi}(u, v)$,
$\forall u, v$). Denote $\lambda_{\varphi} := -d^{J_V} \phi$ and for
$R \in \mathbb{R}$,
$V_{\varphi \leq R} := \{ x \in V \mid \varphi(x)\leq R\}$. (Similarly
we have $V_{\varphi < R}$, $V_{\varphi \geq R}$ etc.)  Below we will
implicitly assume that $(V, J_V, \varphi)$ is of finite type, namely
that $\varphi$ has a finite number of critical points.  Note that if
$R$ is a regular value of $\varphi$ then
$(V_{\varphi \leq R}, \omega_{\varphi} = d\lambda_{\varphi})$ is a
compact Liouville domain.

Another variant is symplectic manifolds that are Stein at infinity:
$(V, J_V, \varphi, R_0, \omega)$. Here $V$ is a symplectic manifold,
endowed with a (possibly non-exact) symplectic structure $\omega$.
Next we have $\varphi: V \longrightarrow \mathbb{R}$, an exhaustion
function with finitely many critical points. The parameter
$R_0 \in \mathbb{R}$ is a regular value of $\varphi$, and $J_V$ is an
integrable complex structure defined on $V_{\varphi \geq R_0}$, and
the following holds along $V_{\varphi\geq R_0}$:
$\omega = -dd^{J_V} \varphi$ is compatible with $J_V$.  Thus $\varphi$
is $J_V$-convex on $V_{\varphi\geq R_0}$.

Symplectic manifolds that are Stein at infinity admit a slightly
different variant of completion, which we now briefly recall
(see~\cite{El-Gr:convex, Bi-Ci:Stein, El-Ci:Weinstein-book} for more
details). Let $(V, J_V, R_0, \varphi, \omega )$ be a symplectic
manifold manifold which is Stein at infinity. Let $R \geq R_0$ and
assume that $\textnormal{Crit}(\varphi) \subset V_{\varphi < R}$. Then
there exists a function $\varphi_R: V \longrightarrow \mathbb{R}$ with
the following properties:
\begin{enumerate}
\item $\varphi_R$ is an exhaustion function and
  $V_{\varphi_R \leq R} = V_{\varphi \leq R}$. Moreover,
  $\varphi_R = \varphi$ on $V_{\varphi \leq R}$.
\item $\varphi_R$ has no critical points in $V_{\varphi_R \geq R}$.
\item $\varphi_R$ is plurisubharmonic on $V_{\varphi \geq R_0}$,
  i.e.~$-dd^{J_V} \varphi_R$ is compatible with $J_V$ along
  $V_{\varphi \geq R_0}$.
\item Define the $1$-form $\widehat{\lambda}_R:= -d^{J_V} \varphi_R$
  on $V_{\varphi \geq R_0}$. Define $\widehat{\omega}_R$ on $V$ by
  setting it to be $\omega$ on $V_{\varphi \leq R_0}$ and
  $\widehat{\omega}_R := d \widehat{\lambda}_R$ on
  $V_{\varphi \geq R_0}$. Let $X_{\widehat{\lambda}_R}$ be the
  Liouville vector field, defined along $V_{\varphi \geq R_0}$, by
  $i_{X_{\widehat{\lambda}_R}} \widehat{\omega}_R =
  \widehat{\lambda}_R$. Then the flow
  $\widehat{\psi}^R_t : V_{\varphi \geq R_0} \longrightarrow
  V_{\varphi \geq R_0}$ of $X_{\widehat{\lambda}_R}$ exists for all
  $t \geq 0$.
\end{enumerate}
We will call $(V, J_V, R_0, \varphi_R, \widehat{\omega}_R)$ a
completion of $(V, J_V, R_0, \varphi, \omega)$.

Finally, we will also need the notion of Liouville manifolds that are
Stein at infinity. These are symplectic manifolds that are Stein at
infinity, $(V, J_V, \varphi, R_0, \omega = d\lambda)$, but now we
assume in addition that the symplectic structure $\omega$ is globally
exact with a prescribed primitive $\lambda$. Moreover, $\lambda$ is
assumed to satisfy $\lambda = -d^{J_V} \varphi$ along
$V_{\varphi\geq R_0}$.

Note that, as for the case of Stein manifolds, if $R \geq R_0$ is a
regular value of $\varphi$ then
$(V_{\varphi \leq R}, \omega = d\lambda)$ is a compact Liouville
domain.

Note also that for the completion of Liouville manifolds that are
Stein at infinity, the Liouville vector field
$X_{\widehat{\lambda}_R}$ is defined all over $V$ and moreover, its
flow exists for all $t \in \mathbb{R}$.

\subsubsection{Floer theory} \label{sbsb:HF} We will work here with
Floer homology and singular homology, both taken with coefficients in
$\mathbb{Z}_2$. We will generally omit the $\mathbb{Z}_2$ from the
notation (e.g.~writing $H_*(L)$ for $H_*(L; \mathbb{Z}_2)$). Our
setting is almost identical to~\cite[Chapter~III,
Section~8]{Se:book-fukaya-categ}, with two slight
differences. Firstly, we work with homological conventions rather than
with cohomological ones. Secondly, we work in an ungraded setting.

Let $(V, \omega = d\lambda)$ be an exact symplectic manifold with a
given primitive $\lambda$ for the symplectic structure. We assume
further that this symplectic manifold is of one of the following three
types:
\begin{enumerate}
\item $(V, d\lambda)$ is a compact Liouville
  domain. \label{i:compact-LD}
\item $(V, d\lambda)$ is the completion
  $(\widehat{V'}, \widehat{\omega'} = d\widehat{\lambda'})$ of a
  compact Liouville domain $(V', \omega' =
  d\lambda')$. \label{i:complete-LD}
\item $(V, d\lambda)$ can be endowed with a structure
  $(V, J_V, R_0, \varphi, \omega = d\lambda)$ of a Liouville manifold
  which is Stein at infinity. In that case we also fix the additional
  structures $J_V, \varphi, R_0$. \label{i:stein-infty}
\end{enumerate}
We denote by $\textnormal{Int\,}V$ the interior of $V$. (Note that
only in case~\eqref{i:compact-LD}, we have
$\textnormal{Int\,}V \subsetneqq V$.) Denote by $\mathcal{J}_V$ the
space of $\omega$-compatible almost complex structures on $V$ which
coincide with, $J^{\lambda}$ near the boundary of $V$ in
case~\eqref{i:compact-LD}, or with $\widehat{J}^{\lambda}$ at infinity
in case~\eqref{i:complete-LD}, or coincide with $J_V$ on
$V_{\varphi\geq R}$ for some $R \geq R_0$ in
case~\eqref{i:stein-infty}.

Let $L_0, L_1 \subset \textnormal{Int\,} V$ be two closed exact
Lagrangian submanifolds. (Exactness of a Lagrangian $L$ will be
generally considered with respect to the given Liouville form
$\lambda$. In case we want to emphasize the form with respect to which
$L$ is exact we will call $L$ a $\lambda$-exact Lagrangian.)  We fix
primitive functions $h_{L_i}: L_i \longrightarrow \mathbb{R}$ to
$\lambda|_{L_i}$, $i=0, 1$.

Let $H: [0,1] \times V \longrightarrow \mathbb{R}$ be a Hamiltonian
function. Write $H_t(x) = H(t,x)$. Henceforth we will implicitly
assume that there exists a compact subset
$K \subset \textnormal{Int\,} V$ such that for all $t \in [0,1]$, the
function $H_t$ is constant outside of $K$. The Hamiltonian vector
field $X_t^{H} = X^{H_t}$ of $H$ is given by
$\omega(X_t^H, \, \cdot \,) = -dH_t(\, \cdot \,)$.

Denote by
$\mathcal{P}_{L_0, L_1} = \bigl\{ \gamma:[0,1] \longrightarrow V \mid
\gamma(0) \in L_0, \, \gamma(1) \in L_1\bigr\}$ the space of paths
with end points on $L_0$, $L_1$. The action functional
$\mathcal{A}_{H}: \mathcal{P}_{L_0, L_1} \longrightarrow \mathbb{R}$
is defined as follows:
\begin{equation} \label{eq:action-funct} \mathcal{A}_{H} (\gamma) :=
  \int_0^1 H(t, \gamma(t))dt - \int_0^1 \lambda(\dot{\gamma}(t)) dt +
  h_{L_1}(\gamma(1)) - h_{L_0}(\gamma(0)).
\end{equation}

Denote by
$\mathcal{O}(H) = \mathcal{O}_{L_0, L_1}(H) \subset \mathcal{P}_{L_0,
  L_1}$ the set of Hamiltonian chords with endpoints on $(L_0, L_1)$,
namely the set of orbits $\gamma:[0,1] \longrightarrow W$ of $X_t^H$
with $\gamma(0) \in L_0$, $\gamma(1) \in L_1$.

Let $\mathscr{D} = (H, J)$ be a regular Floer datum, consisting of a
Hamiltonian function $H:[0,1] \times W \longrightarrow \mathbb{R}$ and
a time-dependent almost complex structure $J = \{J_t\}_{t \in [0,1]}$,
with $J_t \in \mathcal{J}_V$ for every $t$. Sometimes we will write
$\mathcal{O}_{L_0,L_1}(\mathscr{D})$ (or $\mathcal{O}(\mathscr{D})$)
for $\mathcal{O}_{L_0, L_1}(H)$.

The negative gradient flow of $\mathcal{A}_H$ (with respect to a
metric on $\mathcal{P}_{L_0,L_1}$ induced by $J$) gives rise to the
Floer equation associated to $\mathscr{D}$:
\begin{equation} \label{eq:floer-eq-1}
  \begin{aligned}
    & u: \mathbb{R} \times [0,1] \longrightarrow M, \quad u(\mathbb{R}
    \times 0) \subset L_0, \; u(\mathbb{R} \times 1) \subset L_1, \\
    & \partial_s u + J_t (u) \partial_t u =
    J_t X_t^H(u), \\
    & E(u) := \int_{-\infty}^{\infty} \int_{0}^{1} \lvert \partial_su
    \rvert^2 dt ds < \infty.
  \end{aligned}
\end{equation}
where $(s,t) \in \mathbb{R} \times [0,1]$. The quantity $E(u)$ in the
last line of~\eqref{eq:floer-eq-1} is the energy of a solution $u$ and
we consider only finite energy solutions. (Note also that the norm
$\lvert \partial_s u \rvert$ in the definition of $E(u)$ is calculated
with respect to the Riemannian metric associated to $\omega$ and
$J_t$.)  Solutions $u$ of~\eqref{eq:floer-eq-1} are also called Floer
trajectories.

For $\gamma_{-}, \gamma_{+} \in \mathcal{O}(H)$ we have the space of
{\em parametrized} Floer trajectories $u$ connecting $\gamma_{-}$ to
$\gamma_+$:
\begin{equation} \label{eq:floer-traj-space-1} \mathcal{M}(\gamma_-,
  \gamma_+; \mathscr{D}) = \Big\{ u \mid u \text{
    solves~\eqref{eq:floer-eq-1} and} \lim_{s \to \pm \infty} u(s,t) =
  \gamma_{\pm}(t) \Big\}.
\end{equation}
Note that $\mathbb{R}$ acts on this space by translations along the
$s$-coordinate. This action is generally free, with the only exception
being $\gamma_- = \gamma_+$ and the stationary solution
$u(s,t) = \gamma_-(t)$ at $\gamma_-$.

Whenever, $\gamma_- \neq \gamma_+$ we denote by
\begin{equation}
  \mathcal{M}^*(\gamma_-,
  \gamma_+; \mathscr{D}) := \mathcal{M}(\gamma_-,
  \gamma_+; \mathscr{D}) \big/ \mathbb{R}
\end{equation}
the quotient space (i.e. the space of non-parametrized solutions).

For a generic choice of Floer datum $\mathscr{D}$ the space
$\mathcal{M}^*(\gamma_-, \gamma_+; \mathscr{D})$ is a smooth manifold
(possibly with several components having different
dimensions). Moreover, its $0$-dimensional component
$\mathcal{M}^*_0(\gamma_{-}, \gamma_{+}; \mathscr{D})$ is compact
hence a finite set.

The Floer complex $CF(L_0, L_1; \mathscr{D})$ is the vector space,
over $\mathbb{Z}_2$, with a basis formed by the set $\mathcal{O}(H)$:
\begin{equation} \label{eq:CF-1} CF(L_0, L_1; \mathscr{D}) =
  \bigoplus_{\gamma \in \mathcal{O}(H)} \mathbb{Z}_2 \gamma.
\end{equation}
Its differential
$d: CF(L_0, L_1; \mathscr{D}) \longrightarrow CF(L_0, L_1;
\mathscr{D})$ is defined by counting solutions of the Floer equation:
\begin{equation} \label{eq:mu-1} d(\gamma_{-}) := \sum_{\gamma_{+} \in
    \mathcal{O}(H)} \#_{\mathbb{Z}_2} \mathcal{M}^*_0(\gamma_{-},
  \gamma_{+}; \mathscr{D}) \gamma_+, \quad \forall \; \gamma_{-} \in
  \mathcal{O}(H),
\end{equation}
and extending linearly over $\mathbb{Z}_2$. The homology of
$CF(L_0, L_1; \mathscr{D})$ is denoted by $HF(L_0,L_1; \mathscr{D})$ -
the Floer homology of $(L_{0},L_{1})$.

The Floer homology is independent of the choice of the Floer datum in
the sense that for every two regular choices of Floer data
$\mathscr{D}$, $\mathscr{D}'$ there is a quasi-isomorphism, canonical
up to chain homotopy,
$\psi_{\mathscr{D}, \mathscr{D}'}: CF(L_0,L_1; \mathscr{D})
\longrightarrow CF(L_0,L_1; \mathscr{D}')$, called a continuation map.
The (now canonical) isomorphisms induced in homology
$H(\psi_{\mathscr{D}, \mathscr{D}'}): HF(L_0,L_1; \mathscr{D})
\longrightarrow HF(L_0,L_1; \mathscr{D}')$ form a directed system and
we can regard the collection of vector spaces
$HF(L_0,L_1; \mathscr{D})$, parametrized by regular Floer data
$\mathscr{D}$, as one vector space and denote it by $HF(L_0,L_1)$.

\subsubsection{PSS and naturality} \label{sbsb:pss-nat} Given a
Hamiltonian function $F: [0,1] \times V \longrightarrow \mathbb{R}$,
denote by $\widebar{F}(t,x) := -F(t, \phi_t^F(x))$ and
$\widehat{F}(t,x) = -F(1-t, x)$. The flows of these functions are
$\phi_t^{\widebar{F}} = (\phi_t^F)^{-1}$ and
$\phi_t^{\widehat{F}} = \phi_{1-t}^F \circ (\phi_1^F)^{-1}$
respectively. Note that both these flows have the same time-$1$ map:
$\phi_1^{\widebar{F}} = \phi_1^{\widehat{F}} = (\phi_1^F)^{-1}$.  For
two Hamiltonian functions
$F, G: [0,1] \times V \longrightarrow \mathbb{R}$, denote by
$G \# F: [0,1] \times V \longrightarrow \mathbb{R}$ the function
$(G \# F)(t,x) = G(t,x) + F(t, (\phi_t^{G})^{-1}(x))$. Its Hamiltonian
flow is $\phi_t^{G \# F} = \phi_t^G \circ \phi_t^F$. Given a Floer
datum $\mathscr{D} = (F, J)$ and a Hamiltonian flow $\phi_t^G$
generated by $G$ we denote by
$\phi^G_* \mathscr{D} := (G \# F, \phi^G_* J)$ the push-forward Floer
datum, where
$(\phi^G_* J)_t := D \phi_t^G \circ J_t \circ (D \phi_t^G)^{-1}$.

Let $L_0, L_1 \subset \textnormal{Int\,} V$ be two exact Lagrangians
and assume that the Floer datum $\mathscr{D} = (F, J)$ is regular. Let
$G$ be another Hamiltonian function. There is a {\em naturality map}
\begin{equation} \label{eq:nat-G}
  \begin{aligned}
    & \mathcal{N}_G : CF(L_0, L_1; \mathscr{D}) \longrightarrow CF(L_,
    \phi_1^G(L_1); \phi^G_* \mathscr{D}), \\
    & \mathcal{N}_G(\gamma)(t):= \phi_t^G \gamma(t), \; \; \forall
    \gamma \in \mathcal{O}_{L_0,L_1}(F).
  \end{aligned}
\end{equation}
The map $\mathcal{N}_G$ is a chain isomorphism.

Consider now a Lagrangian $L'_1$ which is exact isotopic to $L_1$. Fix
a Hamiltonian function $G$ such that $\phi_1^G(L_1) = L'_1$. The map
induced in homology by $\mathcal{N}_G$ is compatible with the
homological maps induced by continuation. Therefore $\mathcal{N}_G$
induces as well defined isomorphism
$HF(L_0, L_1) \longrightarrow HF(L_0, L'_1)$. Moreover, this
isomorphism is independent of the choice of $G$ (among Hamiltonian
functions $G$ with $\phi_1^G(L_1) = L'_1$). We thus obtain a system of
canonical isomorphisms
$\mathcal{N}^{L_0}_{L'_1, L_1}: HF(L_0, L_1) \longrightarrow HF(L_0,
L'_1)$, defined for every pair of exact isotopic Lagrangians
$L_1, L'_1$.  Moreover,
$$\mathcal{N}^{L_0}_{L_1, L_1} = \id, \quad
\mathcal{N}^{L_0}_{L''_1, L'_1} \circ \mathcal{N}^{L_0}_{L'_1, L_1} =
\mathcal{N}^{L_0}_{L''_1, L_1}.$$

\begin{rems} \label{r:N-maps}
  \begin{enumerate}
  \item For the latter statement to hold it is important that the
    Lagrangians are exact, or more generally weakly exact. Indeed, in
    the presence of holomorphic disks (e.g.~for monotone Lagrangians)
    the isomorphisms $\mathcal{N}^{L_0}_{L'_1, L_1}$ might depend on
    the homotopy class of the path between $L_1$ and $L'_1$ inside the
    space of exact Lagrangians.
  \item Denote by
    $*: HF(L_0, L_1) \otimes HF(L_1, L'_1) \longrightarrow HF(L_0,
    L'_1)$ the product induced by the chain level $\mu_2$-operation.
    Then there exists a class $c_{L_1, L'_1} \in HF(L_1, L'_1)$ such
    that $\mathcal{N}^{L_0}_{L'_1, L_1}(a) = a * c_{L_1, L'_1}$ for
    every $a \in HF(L_0, L_1)$. In fact,
    $c_{L_1, L'_1} = \mathcal{N}^{L_1}_{L'_1, L_1}(e_{L_1})$, where
    $e_{L_1} \in HF(L_1, L_1)$ is the unity.
  \end{enumerate}
\end{rems}

Similarly to the maps $\mathcal{N}^{L_0}_{L'_1, L_1}$ we also have
canonical isomorphisms
$\mathcal{N}^{L'_0, L_0}_{L_1}: HF(L_0, L_1) \longrightarrow
HF(L_0',L_1)$, defined in an analogous way.

We now turn to the PSS isomorphism. Let
$L \subset \textnormal{Int\,} V$ be an exact Lagrangian. Let
$\mathfrak{m} = (f, \rho)$ be a Morse datum, consisting of a Morse
function $f: L \longrightarrow \mathbb{R}$ and a Riemannian metric
$\rho$ on $L$. Denote by $\mathcal{C}(L; \mathfrak{m})$ the Morse
complex associated to $\mathfrak{m}$. Let $\mathscr{D} = (H, J)$ be a
regular Floer datum for the pair $(L,L)$. The PSS map is a
quasi-isomorphism
\begin{equation} \label{eq:PSS} PSS_{\frak{m}, \mathscr{D}}:
  \mathcal{C}(L; \mathfrak{m}) \longrightarrow CF(L,L; \mathscr{D})
\end{equation}
canonical up to chain homotopy. Moreover, the maps
$PSS_{\mathfrak{m}, \mathscr{D}}$, defined for different
$\mathfrak{m}$, $\mathscr{D}$, are compatible with the corresponding
continuation maps up to chain homotopy. Consequently, the isomorphism
induced by $PSS$ in homology
$$PSS: H_*(L) \longrightarrow HF(L,L),$$ which we also denote by $PSS$, is
independent of the data $\mathfrak{m}$, $\mathscr{D}$. Moreover, this
map is multiplicative (with respect to the intersection product on
$H_*(L)$ and the triangle product induced by $\mu_2$ on $HF(L,L)$) and
it sends the fundamental class $[L]$ to the unit $e_L \in HF(L,L)$. We
refer the reader to~\cite{Kat-Mil:PSS, Alb:PSS} for the definition and
properties of this map.

\begin{rems} \label{r:exact-lags}
  \begin{enumerate}
  \item \label{i:exact-monod} Let
    $L_0, L_1 \subset \textnormal{Int\,} V$ be two exact Lagrangians
    that are exact isotopic. Choose any exact isotopy
    $\phi_t: L_0 \longrightarrow \textnormal{Int\,} V$, $t \in [0,1]$,
    with $\phi_0 = \text{inclusion of } L_0 \subset V$ and
    $\phi_1(L_0) = L_1$. By a result of
    Hu-Lalonde-Leclercq~\cite{HLL:monodromy} the map
    ${\phi_1}_* : H_*(L_0; \mathbb{Z}_2) \longrightarrow H_*(L_1;
    \mathbb{Z}_2)$, induced in homology by $\phi_1$, is independent of
    the choice of the isotopy $\{ \phi_t \}$. Therefore there is a
    canonical map
    $\phi_*: H_*(L_0; \mathbb{Z}_2) \longrightarrow H_*(L_1;
    \mathbb{Z}_2)$ between any two exact isotopic exact Lagrangians in
    $\textnormal{Int\,} V$. The map $\phi_*$ is compatible with Floer
    theory in the following sense. First note that if $\{\phi_t\}$ is
    an exact isotopy as above its time-$1$ map induces a map in Floer
    homology
    ${\phi_1}^{HF}: HF(L_0, L_0) \longrightarrow HF(L_1,
    L_1)$. Moreover, this map is independent of the choice of the
    isotopy (in fact,
    $\phi_1^{HF} = \mathcal{N}_{L_1}^{L_1, L_0} \circ
    \mathcal{N}^{L_0}_{L_1, L_0}$). Write $\phi^{HF} :=
    \phi_1^{HF}$. Standard arguments then show that $\phi_*$ equals
    the composition
    \begin{equation*}
      H_*(L_0;\mathbb{Z}_2) \xrightarrow{PSS} HF(L_0, L_0)
      \xrightarrow{ \phi^{HF}} HF(L_1,L_1) \xrightarrow{ PSS^{-1}}
      H_*(L_1;\mathbb{Z}_2).
    \end{equation*}
  \item \label{i:FSS-exact-lags} In general the space of exact
    Lagrangians in $V$ might be disconnected (and even contain
    Lagrangians of different topological types). However, in certain
    situation this is not expected to be so. For example, a version of
    the nearby Lagrangian conjecture asserts that if $V = T^*(N)$ is
    the cotangent bundle of a closed manifold $N$ then all exact
    Lagrangians are exact isotopic to the zero-section. While this is
    still open in general, a result of
    Fukaya-Seidel-Smith~\cite{FSS:ex, FSS:cbndl} and independently of
    Nadler~\cite{Na:cotangent}, says that under mild topological
    assumptions on $N$ the following holds. Every exact Lagrangian
    $L \subset T^*(N)$ is canonically isomorphic, when viewed as an
    objects in the (compact) derived Fukaya category of $T^*(N)$, to
    the zero-section. Moreover, this isomorphism induces the same map
    $HF(L,L) \longrightarrow HF(N,N)$ as the one induced by the
    projection $\textnormal{pr}: T^*(N) \longrightarrow N$ on homology
    $H_*(L) \longrightarrow H_*(N)$, under the canonical
    identifications $HF(L,L) \cong H_*(L)$ and $HF(N,N) \cong H_*(N)$.
  \end{enumerate}
\end{rems}

\subsubsection{Action filtrations and Floer persistent
  homology} \label{sbsb:action-filtrations} We begin by recalling the
fundamentals of filtered Lagrangian Floer theory in the exact setting.
Much of the general theory has been developed
in~\cite{Oh:symp-top-act-1, Oh:symp-top-act-2, Leclercq:spectral,
  Le-Za:spec-inv, DKM:spectral, KMN:spec}, though in somewhat
different frameworks like monotone (and weakly exact) Lagrangians. The
essence however remains the same and a considerable part of these
papers applies with minor changes to the exact case too.

In order to define the action functional and its induced filtrations
in Floer theory we need to endow each exact Lagrangian $L$ with a
primitive $h_L: L \longrightarrow \mathbb{R}$ of the exact form
$\lambda|_L$. We will refer to $h_L$ as a {\em marking of $L$} and to
the pair $(L, h_L)$ as a {\em marked Lagrangian}. However, for
simplicity of notation we will often continue to denote marked
Lagrangians by a single letter, e.g.~$L$, with the understanding that
the primitive $h_L$ has been fixed.

Let $L_0, L_1 \subset \textnormal{Int\,} V$ be two marked Lagrangians.
Let $\mathscr{D} = (H, J)$ be a regular Floer datum for
$(L_0,L_1)$. For $\alpha \in \mathbb{R}$ denote
\begin{equation} \label{eq:CF-alpha}
  CF^{\leq \alpha}(L_0, L_1; \mathscr{D}) :=
  \bigoplus_{\gamma \in \mathcal{O}(H), \, \mathcal{A}_H(\gamma) \leq
    \alpha} \mathbb{Z}_2 \gamma.
\end{equation}
For convenience we extend $\mathcal{A}_H$ to all elements of
$CF(L_0, L_1; \mathscr{D})$ by defining it on
$\lambda = \sum_{i=1}^k a_i \gamma_i$, $a_i \in \mathbb{Z}_2$, to be:
$$\mathcal{A}_H(\lambda) =
\max \{ \mathcal{A}_H(\gamma_i) \mid a_i \neq 0\} = \inf \bigl\{
\alpha \mid \lambda \in CF^{\leq \alpha}(L_0,L_1; \mathscr{D})
\bigr\}.$$ Here we use the convention that $\max \emptyset = -\infty$,
so that $\mathcal{A}_H(0) = -\infty$.

The subspaces $CF^{\leq \alpha} \subset CF$ are in fact subcomplexes.
This is so because for every Floer trajectory
$u \in \mathcal{M}(\gamma_{-}, \gamma_{+}; \mathscr{D})$ we have the
following action-energy relation:
$\mathcal{A}_H(\gamma_{+}) = \mathcal{A}_{H}(\gamma_{-}) - E(u) \leq
\mathcal{A}_{H}(\gamma_{-})$. Therefore
$\mathcal{A}_H(d\gamma) \leq \mathcal{A}_H(\gamma)$, hence
$d (CF^{\leq \alpha}(L_0, L_1; \mathscr{D})) \subset CF^{\leq
  \alpha}(L_0, L_1; \mathscr{D})$.

We write
$HF^{\leq \alpha}(L_0, L_1; \mathscr{D}) := H_*(CF^{\leq \alpha}(L_0,
L_1; \mathscr{D}))$ and for $\alpha \leq \beta \leq \infty$ we denote
by
$i_{\beta, \alpha} : HF^{\leq \alpha}(L_0, L_1; \mathscr{D})
\longrightarrow HF^{\leq \beta}(L_0, L_1; \mathscr{D})$ the map
induced by the inclusion
$CF^{\leq \alpha}(L_0, L_1; \mathscr{D}) \subset CF^{\leq \beta}(L_0,
L_1; \mathscr{D})$. For $\beta = \infty$ we abbreviate
$i^{\alpha} := i^{\infty, \alpha}$.

The homologies $HF^{\leq \alpha}(L_0,L_1; \mathscr{D})$,
$\alpha \in \mathbb{R}$, and the maps $i_{\beta, \alpha}$,
$\alpha \leq \beta$, fit together into a persistence module which we
denote by $HF^{\leq \bullet}(L_0,L_1; \mathscr{D})$ and call the Floer
persistent homology.

Next, we briefly discuss to what extent the Floer persistent homology
depends on the Floer data. The continuation maps
$\psi_{\mathscr{D}', \mathscr{D}}$ do not preserve action-filtrations
in general, hence there is no meaning to write
$H(CF^{\leq \alpha}(L_0,L_1))$ without specifying the Floer datum.
Nevertheless, if $\mathscr{D}' = (H, J')$ and
$\mathscr{D}'' = (H, J'')$ are two regular Floer data with the same
Hamiltonian function $H$, then one can choose the continuation map
$\psi_{\mathscr{D}'', \mathscr{D}'} : CF(L_0, L_1; \mathscr{D}')
\longrightarrow CF(L_0, L_1; \mathscr{D}'')$ to be action
preserving. Moreover, for such Floer data, the chain homotopies
between
$\psi_{\mathscr{D}', \mathscr{D}''} \circ \psi_{\mathscr{D}'',
  \mathscr{D}'}$ and $\text{id}$ can be also chosen to preserve
action. It follows that $\psi_{\mathscr{D}'', \mathscr{D}'}$ induces
an isomorphism between the persistence modules
$HF^{\leq \bullet}(L_0, L_1; \mathscr{D}')$ and
$HF^{\leq \bullet}(L_0, L_1; \mathscr{D}'')$. Moreover, standard
arguments imply that this isomorphism is canonical (in the sense that
there is a preferred such isomorphism). Thus the Floer persistent
homology of $(L_0, L_1)$ depends only on the Hamiltonian function in
the Floer data, hence will sometimes be denoted by
$HF^{\leq \bullet}(L_0, L_1;H)$. In case $L_0 \pitchfork L_1$ we can
take the Hamiltonian function to be $0$, and the Floer persistent
homology using this choice will be abbreviated as
$HF^{\leq \bullet}(L_0, L_1)$.

The persistence modules $HF^{\leq \bullet}(L_0, L_1; \mathscr{D})$
give rise to a variety of numerical invariants. The most important for
us will be spectral invariants and boundary depth. 

Given $a \in HF(L_0, L_1; \mathscr{D})$ we denote by
$\sigma(a; L_0, L_1; \mathscr{D})$ the spectral invariant of $a$,
defined by the recipe in~\eqref{eq:sig-a} of~\S\ref{sb:filtered} for
the chain complex $CF(L_0, L_1; \mathscr{D})$. By the preceding
discussion the spectral invariants $\sigma(a; L_0, L_1; (H,J))$ as
well as boundary depth $\beta(CF(L_0, L_1; (H,J))$ do not depend on
$J$, hence we will sometimes denote them by $\sigma(a; L_0, L_1; H)$
and $\beta(CF(L_0, L_1; H))$ respectively.

Next we discuss the version of spectral invariants involved in the
definition of the spectral metric, namely $c(a; L_0, L_1)$, where
$L_0 \subset \textnormal{Int\,} V$ is a marked exact Lagrangian,
$a \in H_*(L_0)$, and $L_1 \subset \textnormal{Int\,} V$ is another
marked Lagrangian which is exact isotopic to $L_0$. (Here the marking
on $L_1$ is arbitrary and is not assumed to be related in any way to
the given marking of $L_0$ via any isotopy going from $L_0$ to $L_1$.)
Consider the following composition of isomorphisms
$$H_*(L_0) \xrightarrow{\; PSS \;} HF(L_0, L_0)
\xrightarrow{\; \mathcal{N}^{L_0}_{L_1, L_0} \;} HF(L_0, L_1).$$
Assume first that $L_1$ intersects $L_0$ transversely. Choose an
almost complex structure $J$ such that the Floer datum $(0, J)$ is
regular. Consider the chain complex $CF(L_0, L_1; (0, J))$ endowed
with the action filtration, as defined
at~\eqref{eq:CF-alpha}. Consider also the class
$\mathcal{N}^{L_0}_{L_0, L_1} \circ PSS (a)$ viewed as an element of
$H_*(CF(L_0, L_1; (0, J))) = HF(L_0,L_1)$. We then define
\begin{equation} \label{eq:ca} c(a; L_0, L_1) := \sigma
  \bigl(\mathcal{N}^{L_0}_{L_1, L_0} \circ PSS (a); L_0, L_1; 0
  \bigr).
\end{equation}
In case $L_0$ and $L_1$ do not intersect transversely, we
define
$$c(a; L_0, L_1) = \lim_{\|H\| \to 0} \sigma \bigl(\mathcal{N}^{L_0}_{L_1,
  L_0} \circ PSS (a); L_0, L_1; H \bigr),$$ where
$\|H\| := \int_0^1 \bigl(\max_{x \in V} H(t, x) - \min_{x \in V}
H(t,x)\bigr) dt$, and $\|H\| \to 0$ through Hamiltonian functions for
which $\phi_1^H(L_0) \pitchfork L_1$. The fact that the limit exists
and is finite follows from Lipschitz continuity of the spectral
invariants $\sigma$ with respect to the Hofer norm (see
e.g.~\cite{Leclercq:spectral}).

Finally, given an exact Lagrangian $L \subset \textnormal{Int\,} V$ we
define the spectral distance $\gamma$ on the space
$\mathcal{L}_{\text{ex}, L}(\textnormal{Int\,} V)$ of Lagrangians in
$\textnormal{Int\,} V$ which are exact isotopic to $L$ by
\begin{equation} \label{eq:gamma-dist-def} \gamma(L_0, L_1) = c([L_0];
  L_0, L_1) - c([\textnormal{pt}]; L_0, L_1), \quad \, \forall L_0,
  L_1 \in \mathcal{L}_{\text{ex}, L}(\textnormal{Int\,}V).
\end{equation}

\begin{rems} \label{r:spec-choices}
  \begin{enumerate}
  \item The primitives $h_{L_i}: L_i \longrightarrow \mathbb{R}$ for
    the exact $1$-forms $\lambda|_{L_i}$, $i=1,2$, are uniquely
    determined only up to additions of constants. Similarly, one can
    add to the Hamiltonian function $H$ a (time dependent) constant
    $C(t)$. Different such choices have no effect on Floer complex
    $CF(L_0, L_1; \mathscr{D})$ and its homology, but they do add a
    constant to the action functional, hence shift the filtration on
    $CF(L_0, L_1; \mathscr{D})$ by an overall constant. Consequently,
    the spectral numbers $\sigma(a; L_0,L_1; H)$ and $c(a; L_0, L_1)$
    get shifted by a constant which is independent of $a$. Let
    $a', a'' \in HF(L_0, L_1)$, $b', b'' \in H_*(L_0)$. It follows
    that each of the differences
    $$\sigma(a''; L_0, L_1; H) - \sigma(a'; L_0, L_1; H), \quad
    c(b''; L_0, L_1) - c(b'; L_0, L_1)$$ is {\em independent} of the
    preceding choices. In particular, the spectral distance
    $\gamma(L_0, L_1)$ is independent of any choice of marking on
    $L_0$ and $L_1$.
  \item \label{i:spec-lambda} The action functional and the spectral
    invariants depend on the choice $\lambda$ of the Liouville
    form. However, altering $\lambda$ by an exact $1$-form has no
    effect on these quantities. More specifically, let
    $f: V \longrightarrow \mathbb{R}$ be a smooth function and
    consider $\lambda' = \lambda + df$. The latter is also a primitive
    of the symplectic form $\omega$.

    Clearly, a Lagrangian in $V$ is $\lambda$-exact if and only if it
    is $\lambda'$-exact. Let $L_0, L_1 \subset V$ be two
    $\lambda$-exact Lagrangians and fix primitives
    $h_{L_i}: L_i \longrightarrow \mathbb{R}$ for $\lambda|_{L_i}$,
    $i=0,1$. Then $h'_{L_i} := h_{L_i} + f|_{L_i}$ is a primitive for
    $\lambda'|_{L_i}$. Denote by
    $\mathcal{A}'_{H}: \mathcal{P}_{L_0,L_1} \longrightarrow
    \mathbb{R}$ the action functional defined using $\lambda'$ and the
    primitives $h'_{L_i}$, and by $\mathcal{A}_H$ the one defined
    using $\lambda$ and the $h_{L_i}$'s.  A simple calculation shows
    that $\mathcal{A}'_H = \mathcal{A}_H$.  It follows that the
    spectral invariants $\sigma$ and $c$ remain the same when
    replacing $\lambda$ by $\lambda'$ (provided we use the primitives
    $h'_{L_i}$ as above). Consequently, the spectral metric $\gamma$
    remains unchanged too (the latter does not even depend on the
    choices of the primitive functions $h_{L_i}$ or $h'_{L_i}$).
  \item \label{i:extend-metric} In case $V = T^*(N)$ is the cotangent
    bundle of a closed manifold $N$, one can extend the definition of
    the spectral invariants $c(a; L_0, L_1)$ as well as the spectral
    metric $\gamma(L_0, L_1)$ to arbitrary pairs of exact Lagrangians
    (i.e.~including also pairs that are, hypothetically, not isotopic
    one to the other). This follows from
    point~(\ref{i:FSS-exact-lags}) of Remark~\ref{r:exact-lags}.
  \end{enumerate}
\end{rems}

Another source of numerical invariants comes from the barcode
$\mathcal{B}(HF^{\leq \bullet}(L_0, L_1; \mathscr{D}))$ of the
persistence module $HF^{\leq \bullet}(L_0, L_1; \mathscr{D})$,
see~\cite{PRSZ:top-pers} for the definition. Of main interest for our
considerations is the boundary depth $\beta(L_0, L_1; \mathscr{D})$,
which by definition is the length of the longest finite bar in the
barcode $\mathcal{B}(HF^{\leq \bullet}(L_0, L_1; \mathscr{D}))$. We
will discuss this invariant in more detail
in~\S\ref{subs:filtered-complexes}, and give alternative equivalent
definitions of it.

\subsection{Weakly filtered Fukaya categories}
\label{sb:wf-f} Occasionally it will be convenient to view all exact
Lagrangian submanifold as objects in a Fukaya category, taking into
account action filtrations.

Denote by $\fuk(V)$ the Fukaya category whose objects are the closed
marked Lagrangian submanifolds $L \subset V$ (see the beginning
of~\S\ref{sbsb:action-filtrations} for the definition).  Note that
each underlying Lagrangian appears in this category with all its
possible markings. $\fuk(V)$ is an $A_{\infty}$-category whose
realization requires additional auxiliary structures, namely Floer
data for all pairs of objects as well as coherent perturbation data
for every tuple of objects. We will suppress these choices from the
notation, whenever these choices are clear (or irrelevant). We refer
to~\cite{Se:book-fukaya-categ} for the foundations of Fukaya
categories. In contrast to this (and most) references on the subject,
our Fukaya categories (and all Floer complexes in general) will be
ungraded.

The Fukaya category $\fuk(V)$ has the structure of a so called weakly
filtered $A_{\infty}$-category. This means that
$\hom_{\fuk(V)}(L_0, L_1)=CF(L_0,L_1)$ between every pair of objects
$(L_0, L_1)$ is a filtered chain complex, and moreover each of the
higher order operations $\mu_d$, $d \geq 2$, preserves these
filtrations up to a uniformly bounded error (i.e.~the error for
$\mu_d$ depends only on $d$, and not on the objects involved in it).
We refer the reader to~\cite[~\S2]{Bi-Co-Sh:lshadows-long} for more
details on this theory.

\subsection{Local and global Floer theory} \label{sb:Fl-subdomains}
Let $(V, J_V, \varphi, R_0, \omega = d\lambda)$ be a Liouville
manifold which is Stein at infinity. Let $W_0 \subset V$ be a compact
Liouville subdomain, endowed with the structures $\lambda$ and
$\omega$ coming from $V$. Let
$L_0, L_1 \subset \textnormal{Int\,} W_0$ be two closed marked
$\lambda$-exact Lagrangian submanifolds. Consider Hamiltonian
functions $H: [0,1] \times W_0 \longrightarrow \mathbb{R}$, compactly
supported in $[0,1] \times \textnormal{Int\,}W_0$, such that
$\phi_1^H(L_0) \pitchfork L_1$. We will view these also as Hamiltonian
functions on $V$ by extending them to be $0$ outside $W_0$.

The following proposition compares the local and global Floer
invariants of $(L_0, L_1)$. It says that the Floer homologies as well
as filtered numerical invariants of $(L_0, L_1; H)$, when viewed
either in $W_0$ (``local'') or in $V$ (``global''), coincide.

\begin{prop} \label{p:loc-glob} There exist isomorphisms of
  persistence modules
  $$j^{\leq \bullet} :
  HF^{\leq \bullet} \bigl(L_0, L_1; H; (W_0, \omega=d\lambda)\bigr)
  \longrightarrow HF^{\leq \bullet}\bigl(L_0, L_1; H; (V,
  \omega=d\lambda)\bigr)$$ defined for every pair of closed marked
  Lagrangians $(L_0, L_1)$ and $H$ as above. Moreover, the
  corresponding isomorphisms
  $j:= j^{\leq \infty}: HF \bigl(L_0, L_1; (W_0,
  \omega=d\lambda)\bigr) \longrightarrow HF\bigl(L_0, L_1; (V,
  \omega=d\lambda)\bigr)$ on the total homologies are independent of
  $H$ and have the following further properties:
  \begin{enumerate}
  \item \label{i:loc-glob-1} They are compatible with the triangle
    products.
  \item \label{i:loc-glob-2} They are compatible with the naturality
    maps $\mathcal{N}^{L_0}_{L'_1, L_1}$ from~\S\ref{sbsb:pss-nat} (in
    case $L'_1$ and $L_1$ are exact-isotopic) as well as with PSS (in
    case $L_0=L_1$).
  \item \label{i:loc-glob-3} They preserve spectral invariants, namely
    $$\sigma(j(a); L_0,L_1; H; (V, \lambda)) = \sigma(a; L_0,L_1; H; 
    (W_0, \lambda)), \; \forall a \in HF(L_0,L_1; (W_0, \omega)).$$
  \end{enumerate}
\end{prop}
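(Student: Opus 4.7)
The overall strategy is to show that with suitably chosen Floer data the local and global Floer complexes are \emph{literally equal} as filtered chain complexes, which yields every conclusion of the proposition at once. Choose a Floer datum $\mathscr{D} = (H, J)$ on $V$ with $J_t \in \mathcal{J}_V$ of contact type along $\partial W_0$: on a small collar $[-\epsilon, 0] \times \partial W_0 \subset W_0$ (obtained via the Liouville flow of $\lambda$), $J_t$ coincides with the cylindrical structure $J^{\lambda}$ from \S\ref{sbsb:HF}, and elsewhere $J_t$ is unconstrained within $\mathcal{J}_V$. Regularity can be achieved by a generic perturbation in the complement of this collar (and, in the Stein-at-infinity case, outside some sufficiently large compact set). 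Since $H$ is compactly supported in $\textnormal{Int\,}W_0$, the flow $\phi^H_t$ is the identity outside $W_0$ and hence preserves $W_0$; in particular every chord $\gamma \in \mathcal{O}_{L_0, L_1}(H)$ lies entirely in $W_0$, so the set of generators of $CF(L_0, L_1; \mathscr{D})$ computed in $W_0$ and in $V$ coincide.

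Next, every Floer strip $u$ for $\mathscr{D}$ with ends in $\mathcal{O}_{L_0, L_1}(H)$ must remain inside $W_0$. Indeed, on the collar $H$ vanishes so $u$ is honestly $J^{\lambda}$-holomorphic there, and since the function $\phi(s,x) = e^{s}$ is $J^{\lambda}$-plurisubharmonic, $\phi \circ u$ is subharmonic on $u^{-1}(\text{collar})$. The asymptotics of $u$ lie in $W_0$, so $\phi \circ u$ cannot attain an interior maximum outside $W_0$; combined with the analogous convexity at infinity of $V$ coming from $J_V$ and $\varphi$ on $V_{\varphi \geq R}$, this forces $\textnormal{image}(u) \subset W_0$. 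Consequently, $\mathcal{M}^*_0(\gamma_{-}, \gamma_{+}; \mathscr{D})$ computed in $W_0$ equals the one computed in $V$, and the differentials agree. Because $\mathcal{A}_H$ depends only on $\lambda$, $H$ and the markings $h_{L_i}$, all of which restrict from $V$ to $W_0$, this tautological equality of chain complexes strictly preserves the action filtration, giving the isomorphism of persistence modules $j^{\leq \bullet}$ and property~\ref{i:loc-glob-3} for free.

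Independence of $j = j^{\leq \infty}$ from $H$ on the total homology follows from the standard construction of $HF(L_0, L_1)$ as a canonical colimit: given two regular Floer data of the above type, the continuation map between them is defined from a homotopy $\{H_s\}$ compactly supported in $\textnormal{Int\,}W_0$ and a path of almost complex structures of contact type along $\partial W_0$; the same maximum-principle argument confines the $s$-dependent strips to $W_0$, so the local and global continuation maps coincide, and the maps $j_H$ assemble into a well-defined isomorphism on the colimit. Items~\ref{i:loc-glob-1} and~\ref{i:loc-glob-2} are established by the identical mechanism: for the $\mu_2$ pair-of-pants, for the naturality half-strips defining $\mathcal{N}^{L_0}_{L'_1, L_1}$, and for the PSS half-planes coupled with Morse data on $L_0$, one chooses coherent perturbation data whose Hamiltonians are supported in $\textnormal{Int\,}W_0$ and whose almost complex structures are of contact type along $\partial W_0$; the maximum principle then confines all contributing curves to $W_0$, so these operations computed in $W_0$ or in $V$ literally agree. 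The main technical obstacle is arranging these coherent perturbation data so that transversality holds \emph{simultaneously} with the contact-type condition at $\partial W_0$; however, since the latter is imposed only in an open collar, generic perturbations supported in the complement suffice, and this is handled by the standard transversality arguments for Fukaya categories.
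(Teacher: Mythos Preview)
Your argument has a genuine gap in the step where you claim the maximum principle confines Floer strips to $W_0$. You impose the contact-type condition on $J$ only on a collar $[-\epsilon,0]\times\partial W_0$ lying \emph{inside} $W_0$, and outside $W_0$ you allow $J$ to be arbitrary in $\mathcal{J}_V$. But then the maximum principle for $\phi(s,x)=e^s$ applies only on $u^{-1}(\text{collar})$, whose boundary consists of points mapping either to $\{s=-\epsilon\}$ (where $\phi=e^{-\epsilon}$) or to $\partial W_0=\{s=0\}$ (where $\phi=1$). The conclusion $\phi\circ u\leq 1$ on this set is perfectly consistent with $u$ touching $\partial W_0$ and then wandering off into $V\setminus W_0$, where you have no plurisubharmonic barrier whatsoever. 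The convexity of $V$ at infinity only bounds $u$ inside some large sublevel $V_{\varphi\leq R}$; it says nothing about the intermediate region $V_{\varphi\leq R}\setminus W_0$, which in the main application ($W_0=\mathcal{U}_{\leq r_0}\subset E$) carries substantial topology and is certainly not a cylindrical end of $W_0$. The direct barrier argument you sketch would work only if $V\setminus W_0$ were isomorphic to $[0,\infty)\times\partial W_0$ and $J$ were cylindrical on all of it.

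The paper circumvents this by a rescaling argument borrowed from~\cite{FSS:ex}. One extends the cylindrical structure to a long external collar $\mathcal{N}([-\delta,T])$ via the completed Liouville flow, and requires $J=\widehat{J}^{\lambda_0}$ there. Given a Floer strip $u$, one pushes it inward by $\widehat{\psi}_{-T}$ to obtain a strip $v_{-T}$ for the shrunk Lagrangians $L_i^{-T}=\psi_{-T}(L_i)$, with energy $E(v_{-T})=e^{-T}E(u)$. For $T$ large this energy is below the threshold of a standard monotonicity (energy--length) estimate, forcing $v_{-T}\subset W_0$; only \emph{then} does a maximum principle (now legitimately applied, since $v_{-T}$ is already known to lie where $J$ is cylindrical) push $v_{-T}$ further into $W_0\setminus\mathcal{N}([-\delta-T,0])$, and rescaling back gives $u\subset W_0$. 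The monotonicity step is the missing ingredient in your proposal; once it is in place, the remaining deductions (identification of filtered complexes, compatibility with continuation, $\mu_2$, naturality and PSS) proceed exactly as you outlined.
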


\begin{rem}
  Proposition~\ref{p:loc-glob} does not hold without the assumption
  that $L_0, L_1$ are exact. For example, take $L_0=L_1$ to be a
  circle in $V=\mathbb{R}^2$ endowed with the standard symplectic
  structure $\omega_{\textnormal{std}}$, and let $W_0$ be a small
  tubular neighborhood of this circle. Then
  $HF(L_0,L_1; W_0, \omega_{\textnormal{std}}) \cong H_*(S^1)$ but
  $HF(L_0,L_1; V, \omega_{\textnormal{std}}) = 0$.
\end{rem}

\begin{proof}[Proof of Proposition~\ref{p:loc-glob}]
  The main idea in the proof is based on a rescaling (or shrinking)
  argument from~\cite[Section~5]{FSS:ex} which we adapt here to our
  setting.

  We will assume without loss of generality that $L_0 \pitchfork L_1$
  and that $H \equiv 0$. This simplifies notation and the proof of
  the general case is very similar to the one we will present below.
  
  Fix $R>R_0$ such that $R> \max_{\overline{W}_0} \varphi$ (so that
  $\overline{W}_0 \subset V_{\varphi < R}$). Consider the completion
  $(V, J_V, R_0, \varphi_R, \widehat{\omega}_R = d
  \widehat{\lambda}_R)$ of
  $(V, J_V, R_0, \varphi, \omega = d\lambda)$ as described
  in~\S\ref{sbsb:LiSt}. Put $\lambda_0 := \lambda|_{\overline{W}_0}$.

  Denote by $\widehat{\psi}_t: V \longrightarrow V$ the Liouville
  flow corresponding to the completion, and by
  $\widehat{\Psi}: \mathbb{R} \times \partial W_0 \longrightarrow V$
  the embedding $\Psi(s,x) := \widehat{\psi}_s(x)$. Note that
  $\widehat{\psi}_t|_{W_0} = \psi_t|_{W_0}$ for every $t\leq 0$,
  where $\psi_t$ is the Liouville flow corresponding to the
  uncompleted Liouville manifold. For an interval
  $I \subset \mathbb{R}$ we write
  $\mathcal{N}(I) := \widehat{\Psi}(I \times \partial W_0) \subset
  V$.

  Recall from~\S\ref{sbsb:LiSt} the model almost complex structure
  $\widehat{J}^{\lambda_0}$ on $\mathbb{R} \times \partial
  W_0$. Consider now its push forward
  $\widehat{\Psi}_*\widehat{J}^{\lambda_0}$ defined on
  $\mathcal{N}(\mathbb{R})$. Slightly abusing notation we will
  continue to denote this almost complex structure by
  $\widehat{J}^{\lambda_0}$. Note that $\mathcal{N}(\mathbb{R})$ is
  invariant under the flow $\widehat{\Psi}_t$, and moreover
  $\widehat{\Psi}_t$ is $\widehat{J}^{\lambda_0}$-holomorphic along
  $\mathcal{N}(\mathbb{R})$.

  Fix $\delta>0$ small enough such that
  $L_0, L_1 \subset W_0 \setminus \mathcal{N}([-\delta,0])$. For every
  $T>0$, consider the space $\mathcal{J}_{(T)}$ of almost complex
  structures $J$ on $V$ that have the following properties:
  \begin{enumerate}
  \item $J$ is compatible with $\widehat{\omega}_R$.
  \item $J = \widehat{J}^{\lambda_0}$ on $\mathcal{N}([-\delta, T])$.
  \item $J = J_V$ at infinity.
  \end{enumerate}
  Denote the space of time-dependent almost complex structure
  $J = \{J_t\}_{t \in [0,1]}$ with $J_t \in \mathcal{J}_{(T)}$ for
  every $t$, by $\mathcal{J}_{(T)}^{[0,1]}$.

  \begin{lem} \label{p:T_0-im-u} There exists $T_0>0$ such that the
    following holds for every $T \geq T_0$: for every regular Floer
    datum $\mathscr{D} = (0,J)$ with $J \in \mathcal{J}_{(T)}^{[0,1]}$
    and every Floer strip
    $u: \mathbb{R} \times [0,1] \longrightarrow V$ corresponding to
    $(L_0, L_1; \mathscr{D})$ we have
    $\textnormal{image\,}u \subset W_0$.
  \end{lem}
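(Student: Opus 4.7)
The plan is to combine a maximum-principle argument on the model cylindrical region of the neck with a uniform energy bound, in the spirit of the rescaling argument in~\cite[Section~5]{FSS:ex}.

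First I would establish a uniform energy bound. Since $H \equiv 0$, Floer strips for $\mathscr{D} = (0, J)$ are $J$-holomorphic, and the chord space $\mathcal{O}(0)$ consists of constant paths at points of $L_0 \cap L_1 \subset W_0 \setminus \mathcal{N}([-\delta, 0])$. At such a constant path $p$ the action reduces to $\mathcal{A}_0(p) = h_{L_1}(p) - h_{L_0}(p)$, so the action-energy identity $E(u) = \mathcal{A}_0(\gamma_-) - \mathcal{A}_0(\gamma_+)$ gives $E(u) \leq E_0$ where
\[
  E_0 := \max_{p, q \in L_0 \cap L_1} \bigl( h_{L_1}(p) - h_{L_0}(p) - h_{L_1}(q) + h_{L_0}(q) \bigr).
\]
Crucially $E_0$ depends only on $(L_0, L_1, h_{L_0}, h_{L_1})$; it is independent of $T$ and of the choice of $J \in \mathcal{J}_{(T)}^{[0,1]}$.

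Second, I would use the fact that on $\mathcal{N}([-\delta, T])$ the function $\phi(s, x) = e^s$ is $\widehat{J}^{\lambda_0}$-plurisubharmonic, so $\phi \circ u$ is weakly subharmonic on $u^{-1}(\mathcal{N}((-\delta, T)))$. Since $L_0, L_1$ lie in $W_0 \setminus \mathcal{N}([-\delta, 0])$ and the asymptotic limits of $u$ lie in $L_0 \cap L_1$, for any $s_* > 0$ the set $u^{-1}(\mathcal{N}((s_*, T)))$ is open in $\mathbb{R} \times [0, 1]$, disjoint from the Lagrangian boundary $\mathbb{R} \times \{0, 1\}$, and with compact closure. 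Now suppose for contradiction that $\textnormal{image\,} u \not\subset W_0$, i.e.~$u$ enters $\mathcal{N}((0, \infty))$. If $u$ does not reach $\mathcal{N}(\{T\})$, pick a regular value $s_* \in (0, T)$ attained by $s \circ u$ and let $\Omega_{s_*} := u^{-1}(\mathcal{N}((s_*, T)))$. Its topological boundary is contained in $u^{-1}(\mathcal{N}(\{s_*\}))$, on which $\phi \circ u \equiv e^{s_*}$, so the maximum principle forces $\phi \circ u \leq e^{s_*}$ on $\Omega_{s_*}$, contradicting $\phi \circ u > e^{s_*}$ there. Letting $s_* \downarrow 0$ yields $\textnormal{image\,} u \cap \mathcal{N}((0, \infty)) = \emptyset$, contrary to the assumption.

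The remaining case is that $u$ traverses the whole slab, reaching $\mathcal{N}(\{T\})$; this is the main obstacle. To handle it I would invoke a quantitative no-escape estimate on the cylindrical region, giving a lower bound on the portion of $E(u)$ contributed by $u^{-1}(\mathcal{N}([0, T]))$ that tends to infinity as $T \to \infty$. Concretely, Stokes' theorem applied to $u^*(e^s \lambda_{\partial W_0})$ on this preimage, combined with the nonnegativity of $u^* d(e^s \lambda_{\partial W_0})$ coming from $\widehat{J}^{\lambda_0}$-holomorphicity and a lower bound on the flux across $\mathcal{N}(\{T\})$ forced by the $J$-holomorphic equations at the exit, produces an estimate of the form $E(u) \geq c(L_0, L_1)\cdot(e^T - 1)$, as in the argument of~\cite[Section~5]{FSS:ex}. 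Choosing $T_0$ so that this quantity exceeds $E_0$ rules out the case for every $T \geq T_0$. An alternative, slightly softer route is an SFT/Gromov compactness argument: a hypothetical escape sequence $u_{T_k}$ with $T_k \to \infty$ would, after reparametrization, subconverge to a nonconstant finite-energy $\widehat{J}^{\lambda_0}$-holomorphic half-plane in the symplectization $\mathbb{R} \times \partial W_0$, which is excluded by exactness of $\lambda_{\partial W_0}$.
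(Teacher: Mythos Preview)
Your overall architecture is right and, once completed, is equivalent to the paper's proof: bound the energy uniformly, use the maximum principle on the model neck, and then show that a strip reaching level $T$ would cost too much energy. The gap is in your Step~3. The Stokes computation you propose does \emph{not} yield a positive lower bound on the flux through $\mathcal{N}(\{T\})$. With $\Omega_T = u^{-1}(\mathcal{N}((0,T)))$ and $\partial_T$ oriented as part of $\partial\Omega_T$, one has
\[
\int_{\partial_T} u^*\lambda_{\partial W_0} \;=\; -\,e^{-T}\!\!\int_{u^{-1}(\mathcal{N}((T,\infty)))} u^*\omega \;\leq\; 0,
\]
so the Stokes identity $E_{(0,T)}(u)=\int_{\partial_0}u^*\lambda + e^{T}\int_{\partial_T}u^*\lambda$ has a nonpositive second term; the ``lower bound on the flux across $\mathcal{N}(\{T\})$ forced by the $J$-holomorphic equations'' that you invoke does not exist. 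A trivial-cylinder piece over a Reeb arc, reparametrized to be very slow in $s$, shows that on a long thin subdomain of the strip the neck contribution to the energy can be made small, so no purely integral-by-parts argument will do.

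The estimate $E(u)\gtrsim e^{T}$ \emph{is} correct, but its proof is monotonicity plus the Liouville scaling --- and that is exactly what the paper does, packaged more cleanly. The paper sets $v_{-T}:=\widehat{\psi}_{-T}\circ u$, a $(\widehat{\psi}_{-T})_*J$-holomorphic strip on $(L_0^{-T},L_1^{-T})$ with $E(v_{-T})=e^{-T}E(u)\le e^{-T}E_0$. On the fixed collar $\mathcal{N}([-\delta,0])$ the pushed-forward almost complex structure is still the $T$-independent model $\widehat{J}^{\lambda_0}$, so a single monotonicity (energy--length) constant forces $\mathrm{image}\,v_{-T}\subset W_0$ once $e^{-T}E_0$ is small; then the maximum principle upgrades this to $v_{-T}\subset W_0\setminus\mathcal{N}([-\delta-T,0])$, and rescaling back gives $u\subset W_0$. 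If you want to keep your organization, replace the Stokes sentence by: pick a point of $u$ at level $T-1$, apply the monotonicity lemma to $\widehat{\psi}_{-(T-1)}\circ u$ in a fixed ball at level $0$ (where the geometry is $T$-independent), and scale back to get $E(u)\ge c\,e^{T-1}$. Your SFT alternative is too sketchy to stand on its own: the limiting building has symplectization levels asymptotic to Reeb orbits, and ``exactness of $\lambda_{\partial W_0}$'' does not by itself exclude those.
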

  \begin{proof}[Proof of Lemma~\ref{p:T_0-im-u}] Consider the
    Lagrangian submanifolds $L_0^{-T}:= \psi_{-T}(L_0)$,
    $L_1^{-T}:= \psi_{-T}(L_1)$ of $W_0$. Note that $L_0^{-T}$,
    $L_1^{-T}$ are both $\lambda$-exact and
    $L_0^{-T} \pitchfork L_1^{-T}$. For $x \in L_0 \cap L_1$ write
    $x_{-T} := \psi_{-T}(x)$. Denote by $\mathcal{A}^{(L_0, L_1)}$ and
    by $\mathcal{A}^{(L^{-T}_0, L^{-T}_1)}$ the action functionals of
    $(L_0,L_1)$ and of $(L_0^{-T}, L_1^{-T})$ respectively, both
    defined with the Hamiltonian perturbation term $H \equiv 0$.  A
    simple calculation shows that
    \begin{equation} \label{eq:AT-A0} \mathcal{A}^{(L^{-T}_0,
        L^{-T}_1)}(x_{-T}) = e^{-T} \mathcal{A}^{(L_0, L_1)}(x).
    \end{equation}
    Let $u: \mathbb{R} \times [0,1] \longrightarrow \mathbb{R}$ be a
    Floer strip associated to $(L_0, L_1; (0,J))$ with
    $J \in \mathcal{J}_{(T)}^{[0,1]}$. Put
    $v_{-T} := \widehat{\psi}_{-T} \circ u$. Then $v_{-T}$ is a Floer
    strip corresponding to
    $(L_0^{-T}, L_1^{-T}; (0, (\widehat{\psi}_{-T})_*J)$. Note that
    $(\widehat{\psi}_{-T})_*J$ is compatible with
    $\widehat{\omega}_R$. Moreover, by the definition of
    $\mathcal{J}_{(T)}$ we have $(\widehat{\psi}_T)_*J = J$ on
    $\mathcal{N}([-\delta-T, 0])$. Recall also that by definition
    $J \equiv J^{\lambda_0}$ on $\mathcal{N}([\delta, 0])$.

    Denote the energy of Floer strips by $E$. We have:
    \begin{equation} \label{eq:E-T} E(v_{-T}) = e^{-T} E(u) \leq
      e^{-T} \bigl( \max_{x \in L_0 \cap L_1}
      \mathcal{A}^{(L_0,L_1)}(x) - \min_{y \in L_0 \cap L_1}
      \mathcal{A}^{(L_0, L_1)} \bigr).
    \end{equation}

    By a standard energy-length (a.k.a.~monotonicity) estimate for
    pseudo-holomorphic curves (see e.g.~\cite[Section~5.a]{FSS:ex}) we
    have that $\textnormal{image\,} v_{-T} \subset W_0$ provided that
    the right-hand side of~\eqref{eq:E-T} is small enough, which in
    turn can be assured by taking $T$ to be large enough.

    Now
    $L_0^{-T}, L_1^{-T} \subset W_0 \setminus \mathcal{N}([-\delta-T,
    0])$, hence by the maximum principle (applied to the
    $J^{\lambda_0}$-convex function
    $\phi: \mathcal{N}(\mathbb{R}) \longrightarrow \mathbb{R}$,
    $\phi(s,x) = e^s$) we in fact have:
    $$\textnormal{image\,} v_{-T} \subset W_0 \setminus
    \mathcal{N}([-\delta-T, 0]).$$ It follows that
    $u = \widehat{\psi}_T \circ v_{-T}$ has its image inside
    $W_0 \setminus \mathcal{N}([-\delta, 0]) \subset W_0$. This
    concludes the proof of Lemma~\ref{p:T_0-im-u}.
  \end{proof}

  We proceed now with the proof of Proposition~\ref{p:loc-glob}.  Fix
  $J^{\lambda_0}$ on $\mathcal{N}([-\delta, 0])$. Consider Floer data
  of the type $\mathscr{D} = (H \equiv 0, J)$ with
  $J \in \mathcal{J}_{(T)}^{[0,1]}$. By standard transversality
  arguments, for every $T>0$ there exists $J$ as above which makes
  $\mathscr{D}$ regular. Lemma~\ref{p:T_0-im-u} implies that
  there exists $T_0>0$ such that for every $T \geq T_0$ and every
  $J \in \mathcal{J}_{(T)}^{[0,1]}$ with $(0,J)$ regular, the identity
  map
  \begin{equation} \label{eq:i-CF-W_0-V} i: CF \bigl(L_0, L_1; (0,
    J|_{W_0}); (W_0, \omega = d\lambda)\bigr) \longrightarrow CF
    \bigl(L_0, L_1; (0,J); (V, \widehat{\omega}_R=d
    \widehat{\lambda}_R)\bigr)
  \end{equation}
  is a chain map. Clearly $i$ preserves action, hence induces an
  isomorphism of persistence modules
  $$i^{\leq \bullet} :
  HF^{\leq \bullet}(L_0, L_1; H=0; (W_0, \omega)) \longrightarrow
  HF^{\leq \bullet}(L_0, L_1; H=0; (V, \widehat{\omega}_R)).$$
  Finally, note that by the maximum principle the persistence modules
  $HF^{\leq \bullet}(L_0, L_1; H=0; (V, \widehat{\omega}_R= d
  \widehat{\lambda}_R))$ and
  $HF^{\leq \bullet}(L_0, L_1; H=0; (V, \omega=d\lambda))$
  coincide. Thus the isomorphism $i^{\leq \bullet}$ induces the
  isomorphism $j^{\leq \bullet}$ claimed by the proposition. It
  implies also the statement at point~(\ref{i:loc-glob-3}).

  As mentioned at the beginning of the proof, the arguments above can
  be easily adapted to the case of Floer data of the type
  $\mathscr{D} = (H,J)$ with $J \in \mathcal{J}_{(T)}^{[0,1]}$ and
  $H: [0,1] \times V \longrightarrow \mathbb{R}$ compactly supported
  inside $[0,1] \times (W_0 \setminus \mathcal{N}([-\delta, 0])$.

  Moreover, very similar arguments to the above imply that if
  $L_0, \cdots, L_d \subset W_0 \setminus \mathcal{N}([-\delta, 0])$
  are exact Lagrangians then there exists $T_0>0$ such that for every
  disk $S$ (with $(d+1)$ boundary punctures) and for every choice of
  perturbation data $\mathscr{D}_{L_0, \ldots, L_d} = (K,J)$ with
  Hamiltonian term $K$ such that $K_z$ is compactly supported in
  $W_0 \setminus \mathcal{N}([-\delta, 0])$ for every $z \in S$ and
  with almost complex structure $J = \{J\}_{z \in S}$ such that
  $J_z \in \mathcal{J}_{(T)}$ for every $z \in S$, the following
  holds: every Floer polygon $u: S \longrightarrow V$ corresponding to
  $(L_0, \ldots, L_d; (K,J))$ satisfies
  $\textnormal{image\,}u \subset W_0$.

  The statements at points~(\ref{i:loc-glob-1})
  and~(\ref{i:loc-glob-2}) readily follow.
\end{proof}


\section{Cotangent bundles and real Lefschetz fibration}
\label{s:lef}

\subsection{Real Lefschetz fibrations} \label{sb:rlef} In this paper
we will adopt the following definition of Lefschetz fibrations,
essentially as in~\cite{FSS:ex}. By a Lefschetz fibration
$\pi: E \longrightarrow \mathbb{C}$ we mean a symplectic manifold $E$,
endowed with a symplectic structure $\omega_E$ as well as an
$\omega_E$-compatible almost complex structure $J_E$ such that the
following holds:
\begin{enumerate}
\item $\pi$ is $(J_E, i)$-holomorphic and has a finite number of
  critical points. Moreover, we assume that every critical value of
  $\pi$ corresponds to precisely one critical point of $\pi$. We
  denote the set of critical points of $\pi$ by
  $\textnormal{Crit}(\pi)$ and by
  $\textnormal{Critv}(\pi) \subset \mathbb{C}$ the set of critical
  values of $\pi$. For every $z \in \mathbb{C}$ we denote by
  $E_z = \pi^{-1}(z)$ the fiber over $z$.
\item All the critical point of $\pi$ are ordinary double points in
  the following sense. For every $p \in \textnormal{Crit}(\pi)$ there
  exist a $J_E$-holomorphic chart around $p$ (hence $J_{E}$ is
  integrable on this chart) with respect to which $\pi$ is a
  holomorphic Morse function.
\item There exists and exhaustion function
  $\varphi_E: E \longrightarrow \mathbb{R}$ and $R_0 \in \mathbb{R}$
  such that $(E, J_E, R_0, \varphi_E, \omega_E)$ is a symplectic
  manifold which is Stein at infinity. (See~\S\ref{sbsb:LiSt}.)
\item
  We assume that for every compact subset $K \subset \mathbb{C}$ there
  exists $R_K \geq R_0$ such that each level set $\varphi_E^{-1}(R)$,
  $R \geq R_K$, intersects each fiber $E_z$, $z \in K$,
  transversely. Note that this implies that for every $z \in K$,
  $\textnormal{Crit}(\varphi_E|_{E_z}) \subset E_{\varphi_E \leq R}$.
  Thus $(E_z, J_E|_{E_z}, R_K, \varphi_E|_{E_z}, \omega_E|_{E_z})$ is
  a symplectic manifold which is Stein at infinity, for every
  $z \in K \setminus \textnormal{Critv}(\pi)$.
\item Denote by $\Gamma$ the symplectic connection on
  $E \setminus \textnormal{Crit}(\pi)$, associated to $\omega_E$.
  (Recall that the horizontal distribution of this connection is the
  $\omega_E$-complement of the tangent spaces of the fibers of $\pi$.)
  Let $\gamma: [0,1] \longrightarrow \mathbb{C}$ be a smooth
  curve. Then the parallel transport
  $\Pi_{\gamma}: E_{\gamma(0)} \longrightarrow E_{\gamma(1)}$ along
  $\gamma$ is well defined at infinity.
\end{enumerate}
  
We now turn to real Lefschetz fibrations. By a real structure on a
Lefschetz fibration $\pi: E \longrightarrow \mathbb{C}$ we mean an
involution $c_E: E \longrightarrow E$ which is anti
$\omega_E$-symplectic and covers (with respect to $\pi$) the standard
complex conjugation
$c_{\mathbb{C}} : \mathbb{C} \longrightarrow \mathbb{C}$. We will
assume in addition that $c_E$ is anti $J_E$-holomorphic. We denote by
$E_{\mathbb{R}} \subset E$ the fixed locus of $c_E$ and call it the
{\em real part of $E$}.  Note that $E_{\mathbb{R}}$ is automatically a
smooth Lagrangian submanifold of $E$ (of course, it might be void).

It turns out that every smooth connected closed manifold can be
realized as the real part of a Lefschetz fibration. This is proved
in~\cite[Section~3]{FSS:ex}. More precisely, in that paper the
following is proved. Given a connected closed $n$-manifold $N$ and a
Morse function $f:N \longrightarrow \mathbb{R}$ with the property that
the level set of each critical value contains precisely one critical
value, there exist the following:
\begin{enumerate}
\item A smooth affine variety $E$, endowed with a complex structure
  denote by $J_E$.
\item A proper holomorphic function
  $\pi: E \longrightarrow \mathbb{C}$.
\item A plurisubharmonic function
  $\varphi : E \longrightarrow \mathbb{R}$ which is proper and bounded
  below. Denote by $\omega_E = -dd^{\mathbb{C}} \varphi$ the
  associated symplectic structure on $E$. Put also
  $\lambda_E = -d^{\mathbb{C}} \varphi$, so that
  $\omega_E = d\lambda_E$. (Here and in what follows, for a real
  valued function $\varphi$ on a complex manifold with complex
  structure $J$ we denote by $d^{\mathbb{C}}\varphi$ the $1$-form
  $dh \circ J$.)

\item An anti-$J_E$-holomorphic involution $c_E: E \longrightarrow E$.
\end{enumerate}
with the following properties:
\begin{enumerate}
\item The function $\varphi$ is $c_E$-invariant. In particular $c_E$
  is anti-$\omega_E$-symplectic.
\item $\pi \circ c_E = c_{\mathbb{C}} \circ \pi$, i.e.~$c_E$ covers
  the standard complex conjugation $c_{\mathbb{C}}$.
\item $\pi: E \longrightarrow \mathbb{C}$ is a Lefschetz fibration (in
  the sense of the definition from the beginning of~\S\ref{sb:rlef})
  with respect to the structures $\omega_E$ and $J_E$. Moreover, when
  endowed with $c_E$, $\pi:E \longrightarrow \mathbb{C}$ is a real
  Lefschetz fibration according to the preceding definition.
\item The real part $E_{\mathbb{R}} \subset E$ (with respect to $c_E$)
  is diffeomorphic to $N$.
\end{enumerate}
Moreover, $E$ and its associated structures above can be chosen such
that there is a diffeomorphism
$\vartheta : N \longrightarrow E_{\mathbb{R}}$ with
$\pi|_{E_{\mathbb{R}}} \circ \vartheta : N \longrightarrow \mathbb{R}$
arbitrarily close to $f$ in the $C^2$-topology.


Note that $\textnormal{Critv}(\pi)$ is invariant under the conjugation
$c_{\mathbb{C}}$, hence the points of
$\textnormal{Critv}(\pi) \setminus \mathbb{R}$ come in pairs of
conjugate points. Further, we have
$\pi(E_{\mathbb{R}}) \subset \mathbb{R}$ and
$\textnormal{Critv}(\pi|_{\mathbb{R}}) = \textnormal{Critv}(\pi) \cap
\mathbb{R}$.

For a point $x \in \textnormal{Critv}(\pi) \cap \mathbb{R}$ denote by
$\thmb_x \subset E$ the Lefschetz thimble associated to the curve
$[0,\infty) \ni t \mapsto itx \in \mathbb{C}$. \label{p:thimble-x}

\subsection{Embedding the ball cotangent bundle into a real Lefschetz
  fibration} \label{sb:bcbundle-rlef}

A simple calculation shows that $\lambda_E|_{E_{\mathbb{R}}} = 0$,
hence $E_{\mathbb{R}} \subset E$ is a $\lambda_E$-exact Lagrangian
submanifold.

Fix a Riemannian metric on $N$ and denote by $| \cdot |$ the norm on
the fibers of $T^*(N)$ corresponding to the Riemannian metric via the
isomorphism $T^*(N) \cong T(N)$ induced by the same metric. We
denote $$T^*_{\leq r}(N) = \{ v \in T^*(N) \mid |v| \leq r\}$$ the
radius-$r$ ball cotangent bundle. Similarly we have $T^*_{<r}(N)$,
$T^*_{\geq r}(N)$ etc.~and more generally for any subset
$I \subset \mathbb{R}$ we write
$T^*_{I}(N) = \{ v \in T^*(N) \mid |v| \in I\}$. Denote by
$\lambda_{\text{can}} = pdq$ the standard Liouville form on $T^*(N)$
and let $\omega_{\text{can}} = d \lambda_{\text{can}}$ be the
canonical symplectic structures. We identify $N$ with the zero section
of $T^*(N)$.

Fix a diffeomorphism $\vartheta : N \longrightarrow E_{\mathbb{R}}$ as
provided by the previous construction of the real Lefschetz fibration
$\pi: E \longrightarrow \mathbb{C}$. By the Darboux-Weinstein theorem
there exists $r_0>0$ and a symplectic embedding
$\kappa: T^*_{\leq r_0}(N) \longrightarrow E$ such that
$\kappa(x) = \vartheta (x)$ for every $x \in N$. Moreover, by possibly
decreasing $r_0>0$ we can arrange that the embedding $\kappa$ sends
the cotangent fibers $T^*_{x}(N) \cap T^*_{\leq r_0}(N)$,
$x \in \vartheta^{-1}(\textnormal{Critv}(\pi|_{\mathbb{R}}))$, to the
thimbles $\thmb_{\vartheta(x)} \cap \textnormal{image\,}(\kappa)$ in
$E$.  We write from now on
$\mathcal{U}_{\leq r} = \kappa(T^*_{\leq r}(N))$ for $r\leq r_0$, and
as before we have the analogous subsets $\mathcal{U}_{<r}$,
$\mathcal{U}_{>r}$ and $\mathcal{U}_{I}$.

Next, as explained in~\cite{FSS:ex} the symplectic embedding $\kappa$
is exact. More precisely, there exists a function
$f:T^*_{\leq r_0}(N) \longrightarrow \mathbb{R}$ such that
$\kappa^* \lambda_E = \lambda_{\text{can}} + df$. Moreover, we may
assume that $f|_N = 0$. (These statements follow from the fact that
$\kappa^*\lambda_{E} - \lambda_{\text{can}}$ is closed and vanishes
along the zero-section $N \subset T^*_{\leq r_0}(N)$.) In view of
point~(\ref{i:spec-lambda}) of Remark~\ref{r:spec-choices} we can
replace $\lambda_{\text{can}}$ by $\lambda := \kappa^*\lambda_E$ and
work from now on with the form $\lambda_E$ for defining the action
functional, spectral invariants and the spectral metric for exact
Lagrangians in $\mathcal{U}_{\leq r_0} \subset E$.

Henceforth we will identify $T^*_{\leq r_0}(N)$ with
$\mathcal{U}_{\leq r_0}$ and write $T^*_{\leq r_0}(N)$ and
$\mathcal{U}_{\leq r_0}$ (resp.~$N$ and $E_{\mathbb{R}}$)
interchangeably for the same thing.

In the following we will need a slight extension of
Proposition~\ref{p:loc-glob} that holds also for the thimbles
$\thmb_{x_j}$. Clearly the thimbles $\thmb_{x_j}$ are
$\lambda_E$-exact Lagrangians and we fix a marking for them. Note that
$\mathcal{U}_{\leq r_0} \subset E$ is a compact Liouville subdomain.
The following shows that Proposition~\ref{p:loc-glob} essentially
holds also for pairs of Lagrangians of the type $(L,
\thmb_{x_j})$. For simplicity in this proposition we take the
Hamiltonian terms in the Floer data to be $0$.
\begin{prop} \label{p:loc-glob-t} There exist isomorphisms of
  persistence modules
  $$j^{\leq \bullet}:
  HF^{\leq \bullet}(L, \thmb_{x_j}; (\mathcal{U}_{\leq r_0},
  \omega_E)) \longrightarrow HF^{\leq \bullet}(L, \thmb_{x_j}; (E,
  \omega_E))$$ defined for all closed marked $\lambda_E$-exact
  Lagrangians $L \subset \mathcal{U}_{\leq r_0}$. Moreover, the
  corresponding isomorphisms
  $j:= j^{\leq \infty}: HF(L, \thmb_{x_j}; (\mathcal{U}_{\leq r_0},
  \omega_E)) \longrightarrow HF(L, \thmb_{x_j}; (E, \omega_E))$ on the
  total homologies have the following properties:
  \begin{enumerate}
  \item They are compatible with the triangle products (among closed
    Lagrangians).
  \item They are compatible with the naturality maps
    $\mathcal{N}^{L'_0, L_0}_{\thmb_{x_j}}$ from~\S\ref{sbsb:pss-nat}
    (in case $L'_0$ and $L_0$ are exact-isotopic).
  \item They preserve spectral invariants, namely
    $$\sigma(j(a); L, \thmb_{x_j}; (\mathcal{U}_{\leq r_0}, \lambda_E))
    = \sigma(a; L, \thmb_{x_j}; (E, \lambda_E)), \; \forall a \in
    HF((L,\thmb_{x_j}); (\mathcal{U}_{\leq r_0}, \omega_E)).$$
  \end{enumerate}
  Completely analogous statements to the above continue to hold also
  for pairs of the type $(\thmb_{x_j}, L)$ with
  $L \subset \mathcal{U}_{\leq r_0}$ closed $\lambda_E$-exact
  Lagrangians.
\end{prop}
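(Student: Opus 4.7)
The plan is to adapt the rescaling argument used in the proof of Proposition~\ref{p:loc-glob} to the present setting. The essential new difficulty is that $\thmb_{x_j}$ is non-compact and extends out of $\mathcal{U}_{\leq r_0}$, so the Liouville contraction cannot be applied to it. The crucial geometric observation that circumvents this obstacle is that inside $\mathcal{U}_{\leq r_0}$ the thimble coincides with the cotangent fiber $\kappa(T^*_{\vartheta^{-1}(x_j)}(N)\cap T^*_{\leq r_0}(N))$, and such fibers are setwise invariant under the standard Liouville flow of $T^*(N)$. By point~\ref{i:spec-lambda} of Remark~\ref{r:spec-choices}, spectral computations inside $\mathcal{U}_{\leq r_0}$ may be carried out using either $\lambda_E$ or $\lambda_{\text{can}}$, so we are free to use the fiber-preserving canonical Liouville flow $\psi_t$ for the shrinking construction. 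This lets us keep $\thmb_{x_j}$ fixed while contracting $L$ to $L^{-T}:=\psi_{-T}(L)$ for arbitrarily large $T$.

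Fix $\delta>0$ so that $L\subset \mathcal{U}_{\leq r_0}\setminus\mathcal{N}([-\delta,0])$, and take a regular Floer datum $(0,J)$ with $J\in \mathcal{J}_{(T)}^{[0,1]}$ equal to the cylindrical model $\widehat{J}^{\lambda}$ on the long collar $\mathcal{N}([-\delta,T])$. The bijection $x\mapsto \psi_{-T}(x)$ identifies $L\cap \thmb_{x_j}$ with $L^{-T}\cap \thmb_{x_j}$ (since all of these lie in $\mathcal{U}$) and rescales actions by $e^{-T}$ up to a bounded additive correction coming from the exact difference $\lambda_E - \lambda_{\text{can}}$ which plays no role in the ensuing estimates. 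Consequently the total action spread of the generators of $CF(L^{-T},\thmb_{x_j})$, and hence the energy of any Floer strip, is $O(e^{-T})$. The principal technical step, which is the main obstacle of the proof, is a confinement lemma asserting that for $T$ large enough every such Floer strip $u$ has image contained in $\mathcal{U}_{\leq r_0}$. Setting $v_{-T}:=\widehat{\psi}_{-T}\circ u$, the transported strip has boundary on $L^{-2T}$ and on $\widehat{\psi}_{-T}(\thmb_{x_j})$; because cotangent fibers are Liouville-invariant, the latter meets $\mathcal{U}$ in $\thmb_{x_j}\cap \mathcal{U}_{\leq e^{-T}r_0}$, so both boundary components sit deep inside the collar. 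Standard monotonicity combined with the maximum principle for the $\widehat{J}^{\lambda}$-plurisubharmonic function $\phi=e^s$ on $\mathcal{N}([-\delta,T])$, exactly in the style of~\cite[Section~5]{FSS:ex}, then confines $v_{-T}$. The additional ingredient needed here, absent from Lemma~\ref{p:T_0-im-u}, is that in cylindrical coordinates the thimble appears as a product of the Liouville direction $\partial_s$ with the Legendrian link $\thmb_{x_j}\cap \partial \mathcal{U}_{\leq r_0}$, hence is tangent to $\nabla \phi$; this rules out a boundary maximum of $\phi\circ v_{-T}$ along the $t=1$ side of the strip, precisely as in the standard treatment of cotangent-fiber boundary conditions. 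Applying $\widehat{\psi}_T$ back yields the desired confinement for $u$.

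Once the confinement lemma is in hand, the identity on generators defines an action-preserving chain isomorphism
\begin{equation*}
  i:CF(L,\thmb_{x_j};(0,J);(\mathcal{U}_{\leq r_0},\omega_E))\longrightarrow CF(L,\thmb_{x_j};(0,J);(E,\omega_E)),
\end{equation*}
which induces the required isomorphism $j^{\leq\bullet}$ of persistence modules and, as an immediate consequence, the equality of spectral invariants in~(3). The compatibility with the triangle product in~(1) and with the naturality maps $\mathcal{N}^{L'_0,L_0}_{\thmb_{x_j}}$ in~(2) follows by applying the very same confinement argument to moduli spaces of pseudoholomorphic polygons and continuation strips, all of whose closed Lagrangian inputs can be pushed simultaneously into $\mathcal{U}_{\leq e^{-T}r_0}$ by a single application of $\psi_{-T}$. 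The analogous statement for pairs $(\thmb_{x_j},L)$ follows by a completely symmetric argument, reversing the roles of the two boundary components of the Floer strip.
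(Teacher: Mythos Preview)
Your outline follows the paper's intended route (the paper itself omits the proof, referring back to Proposition~\ref{p:loc-glob}), and you correctly isolate the one new difficulty: the thimble is non-compact and crosses $\partial\mathcal{U}_{\leq r_0}$, so one needs the boundary Lagrangian to be cylindrical in the collar in order to run the maximum-principle step. Your geometric observation that $\thmb_{x_j}\cap\mathcal{U}_{\leq r_0}=F_{q_j}$ is invariant under the canonical Liouville flow is exactly the right input.

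There is, however, a genuine gap in the execution. You invoke the ``fiber-preserving canonical Liouville flow $\psi_t$'' and then set $v_{-T}:=\widehat{\psi}_{-T}\circ u$ for a Floer strip $u$ that a~priori lives in $E$. But the canonical Liouville flow is defined on $T^*(N)$ (equivalently on $\mathcal{U}_{\leq r_0}$ for negative times), not on $E\setminus\mathcal{U}_{\leq r_0}$; so $\widehat{\psi}_{-T}\circ u$ is well-defined only if $u$ already lies in $\mathcal{U}_{\leq r_0}$, which is precisely the confinement you are trying to prove. The alternative, namely the $\lambda_E$-Liouville flow on the completion of $E$ used in the proof of Proposition~\ref{p:loc-glob}, \emph{is} globally defined, but since $\lambda_E=\lambda_{\text{can}}+df$ on $\mathcal{U}$ with $f$ generically non-constant along fibers, that flow does \emph{not} preserve $F_{q_j}$; so your assertion that ``$\widehat{\psi}_{-T}(\thmb_{x_j})$ meets $\mathcal{U}$ in $\thmb_{x_j}\cap\mathcal{U}_{\leq e^{-T}r_0}$'' fails for it. Your argument tacitly switches between these two flows, and neither one alone has both of the properties you use.

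One way to close the gap is to first replace $\lambda_E$ by $\lambda':=\lambda_E-dg$, where $g:E\to\mathbb{R}$ is any compactly supported extension of $f$ from $\mathcal{U}_{\leq r_0}$; by Remark~\ref{r:spec-choices}(\ref{i:spec-lambda}) this leaves all action filtrations unchanged, but now $\lambda'|_{\mathcal{U}_{\leq r_0}}=\lambda_{\text{can}}$, so the \emph{global} $\lambda'$-Liouville flow on the completion of $E$ restricts to the canonical one on $\mathcal{U}_{\leq r_0}$ and the entire inward collar $\mathcal{N}^{\lambda'}([-\delta,0])$ sits inside $\mathcal{U}_{\leq r_0}$ with $\thmb_{x_j}$ genuinely cylindrical there. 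With this adjustment the argument of Lemma~\ref{p:T_0-im-u} goes through: the monotonicity step uses a fixed annulus $\mathcal{U}_{[e^{-\delta}r_0,r_0]}$ in which $J=J^{\lambda'}$ and $\thmb_{x_j}=F_{q_j}$ are $T$-independent, giving a uniform lower energy bound $c_0>0$ for any strip that crosses it, and the maximum-principle-with-Legendrian-boundary step is now legitimate because the relevant collar lies in the region where the flow is canonical. The remaining points~(1)--(3) then follow exactly as you indicate.
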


We will omit the proof, as it is based on very similar ideas as the
proof of Proposition~\ref{p:loc-glob}.

\subsection{The extended Lefschetz fibration}
\label{sb:extended} In order to use the theory developed
in~\cite{Bi-Co:lefcob-pub} we consider yet another Lefschetz fibration
$\pi':E' \longrightarrow \mathbb{C}$, which we call the extended
fibration of $E$. The construction is taken
from~\cite{Bi-Co:lefcob-pub} and goes as follows.  Write the critical
values of $\pi$ as
$\textnormal{Critv}(\pi) = \{x_1, \ldots, x_k, z_1, \widebar{z}_1,
\ldots, z_l, \widebar{z}_l\}$, where $x_i \in \mathbb{R}$ are the real
critical values and $z_j, \widebar{z}_j$ are pairs of non-real complex
conjugate critical values of $\pi$. Let $p_i \in E_{x_i}$ be the
critical point corresponding to $x_i$. Let $\nu>0$ be large enough
such that $\nu > |\text{Im} \, z_j|$ for every $j$.
\begin{prop} \label{p:extended-1} There exists a Lefschetz fibration
  $\pi' : E' \longrightarrow \mathbb{C}$ with the following
  properties:
  \begin{enumerate}
  \item \label{i:E'} $(E', \pi', J_{E'}, \omega_{E'})$ coincides with
    $(E, \pi, J_E, \omega_E)$ over
    $\{z \in \mathbb{C} \mid -\nu < \text{Im} \, z\}$.  Moreover,
    $\textnormal{Critv}(\pi') = \{x_1, \ldots, x_k, x'_1, \ldots,
    x'_k, z_1, \widebar{z}_1, \ldots, z_l, \widebar{z}_l\}$, namely
    every real critical value $x_i$ has now a corresponding critical
    value $x'_i$ (which is not assumed to be real anymore). The new
    critical values $x'_i$ have $\text{Im}\, x'_i < -\nu$, and they
    are placed as depicted in Figure~\ref{f:ext-lef-1}.
  \item \label{i:match-S} Denote by $\gamma_i \subset \mathbb{C}$,
    $i=1, \ldots, k$, the paths connecting $x_i$ with $x'_i$, as in
    figure~\ref{f:ext-lef-1} and denote by $p'_i \in E'_{x'_i}$ the
    critical point corresponding to $x'_i$. The Lefschetz thimbles
    emanating from $p_i$ and from $p'_i$ along the two opposite ends
    of $\gamma_i$ form a matching sphere $S_i \subset E'$, lying over
    $\gamma_i$. (Put in different words, the vanishing cycles
    emanating from $p_i$ along $\gamma_i$ converge over the other end
    of $\gamma_i$ to the point $p'_i$ and their union forms a smooth
    Lagrangian sphere $S_i$.)
  \item \label{i:omega-E'-exact} The symplectic structure
    $\omega_{E'}$ is exact. Moreover, it admits a primitive
    $\lambda_{E'}$ which coincides with $\lambda_{E}$ over
    $E|_{-\nu < \text{Im} \, z}$.
  \item \label{i:E'-Stein} There exists an exhaustion function
    $\varphi': E' \longrightarrow \mathbb{R}$ and $R_0 \in \mathbb{R}$
    such that $(E', J_{E'}, \varphi', R_0, \omega_{E'})$ is a
    symplectic manifold which is Stein at infinity.
  \item \label{i:S-exact} The matching spheres $S_i$
    from~\eqref{i:match-S} are $\lambda_{E'}$-exact.
  \end{enumerate}
\end{prop}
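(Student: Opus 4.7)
The plan is to build $E'$ by modifying $E$ only below the horizontal line $\{\text{Im}\,z = -\nu\}$, so that properties~(\ref{i:E'}) and the ``above the line'' parts of~(\ref{i:omega-E'-exact})--(\ref{i:E'-Stein}) are automatic. Since $\pi$ has no critical values with $\text{Im}\,z \leq -\nu$, symplectic parallel transport gives a symplectic trivialization of $\pi^{-1}(\{\text{Im}\,z \leq -\nu\})$ as a product $\{\text{Im}\,z \leq -\nu\} \times F$ over a reference fiber $F$ sitting just below the line. For each real critical value $x_i$, I would splice into this product region a standard local Lefschetz model $\pi_i^{\mathrm{loc}}: E_i^{\mathrm{loc}} \to D_i$ over a small disk $D_i$ around the prospective $x'_i$, carrying a single critical point $p'_i$ whose vanishing cycle at a chosen boundary point of $D_i$ equals $V_i \subset F$, where $V_i$ is obtained by parallel-transporting the vanishing sphere at $p_i$ down the vertical segment from $x_i$ into $F$. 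Since $V_i$ is a Lagrangian sphere in $F$, such a local model is provided by the classical Lefschetz-handle attachment, and the symplectic neighborhood theorem lets me glue it to the product region along a neighborhood of $V_i \times \partial D_i$, yielding $\pi': E' \to \mathbb{C}$.

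Property~(\ref{i:match-S}) then follows tautologically from the construction: the thimble of $\pi'$ emanating from $p_i$ along the downward half of $\gamma_i$ and the thimble emanating from $p'_i$ along the upward half share the same vanishing cycle $V_i$ over the meeting point, so they close up to a smooth Lagrangian sphere $S_i$. For~(\ref{i:omega-E'-exact}) I would use $\lambda_E$ on $E$, extend it across the trivialized product region by the obvious primitive $\lambda_E|_F + (\text{standard $1$-form on the base})$, and then interpolate with the primitive coming from the standard model on each $E_i^{\mathrm{loc}}$; the discrepancy between the two primitives on the gluing overlap is closed and supported on a contractible subset, so it can be cancelled by adding an exact $1$-form. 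Item~(\ref{i:E'-Stein}) is established by patching $\varphi_E$ with the standard plurisubharmonic exhaustion on the local models, composing with a sufficiently convex increasing function outside a large compact subset so that the maximum (or an appropriate regularized maximum) remains plurisubharmonic at infinity. For~(\ref{i:S-exact}), when $n = \dim N \geq 2$ the sphere $S_i \cong S^n$ is simply connected, so $\lambda_{E'}|_{S_i}$ is automatically exact; in the case $n=1$ one has only the single period $\int_{S_i} \lambda_{E'}$ to kill, and this can be arranged by adjusting the free additive constant in the primitive on $E_i^{\mathrm{loc}}$.

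The step I expect to be the main obstacle is the simultaneous bookkeeping in item~(\ref{i:omega-E'-exact}): I need the spliced form $\omega_{E'}$ to be at once symplectic, exact with primitive extending $\lambda_E$ across the line $\{\text{Im}\,z = -\nu\}$, compatible with an almost complex structure $J_{E'}$ making $\pi'$ holomorphic near each $p'_i$, and compatible at infinity with the plurisubharmonic exhaustion needed for~(\ref{i:E'-Stein}). Each of these is routine in isolation, but combining them forces the gluing to be performed at the level of $1$-forms with a carefully chosen partition of unity rather than via a naive patching of symplectic forms, and the plurisubharmonicity near infinity must be tracked through this process. Since the construction is exactly the one from~\cite{Bi-Co:lefcob-pub}, my actual proof would consist of importing those gluing lemmas and verifying that the additional exactness refinements~(\ref{i:omega-E'-exact}) and~(\ref{i:S-exact}) survive.
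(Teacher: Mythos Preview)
Your overall strategy for building $E'$ (trivialize below $\{\text{Im}\,z=-\nu\}$ and splice in local Lefschetz models with prescribed vanishing cycles) is correct and is indeed what the construction in~\cite{Se:book-fukaya-categ,Bi-Co:lefcob-pub} does, so items~(\ref{i:E'}),~(\ref{i:match-S}),~(\ref{i:E'-Stein}) are fine.

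For~(\ref{i:omega-E'-exact}) your approach differs from the paper's. You propose to glue primitives by hand; the paper instead argues cohomologically, first showing $[\omega_{E'}]=0$ on $H_2(E';\mathbb{R})$ via an explicit isomorphism $H_2(E^+,\cup_i \partial T_{y_i})\cong H_2(E')$ built from parallel transport, and then showing that a primitive extending $\lambda_E$ exists by proving that the inclusion-induced map $H_1(E^+;\mathbb{R})\to H_1(E';\mathbb{R})$ is injective. Your direct-gluing approach can be made to work, but your claim that the gluing overlap is contractible is false: the overlap deformation retracts onto $V_i\times S^1\cong S^{n-1}\times S^1$, which has nontrivial $H^1$. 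So the discrepancy $1$-form need not be exact on the overlap for free, and you would still need a cohomological argument (essentially the same $H_1$-injectivity the paper uses) to absorb it.

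There is a genuine gap in your treatment of~(\ref{i:S-exact}) when $n=1$. Adjusting an ``additive constant'' in the primitive on $E_i^{\mathrm{loc}}$ amounts to adding an exact $1$-form $df$ supported there; but since the arc $S_i\cap E_i^{\mathrm{loc}}$ has both endpoints in the region where the primitive is already pinned to $\lambda_E$, the contribution $\int_{\text{arc}} df$ vanishes and the period $\int_{S_i}\lambda_{E'}$ is unchanged. The paper's fix is more delicate: one shows that no nontrivial linear combination of the classes $[S_i]\in H_1(E';\mathbb{R})$ lies in the image of $H_1(E^+;\mathbb{R})\to H_1(E';\mathbb{R})$. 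This lets you choose a \emph{closed} (not merely exact) $1$-form $\theta$ on $E'$ whose class vanishes on that image but has prescribed periods $\langle[\theta],[S_i]\rangle=\int_{S_i}\lambda_{E'}$; since $\theta|_{E^+}$ is then exact, you can cut it off and subtract to obtain a new primitive that still equals $\lambda_E$ on $E^+$ while making each $S_i$ exact.
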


\begin{rem}
  We do not require that the exact $1$-form $\lambda_{E'}$ from
  point~\eqref{i:omega-E'-exact} of the proposition coincides with
  $-d^{J_{E'}}\varphi'$ at infinity. While it seems that this can be
  arranged, we will not need such a statement in the following.
\end{rem}

\begin{figure}[htbp]
  \begin{center}
    \includegraphics[trim=0 0 120 0, scale=0.68]{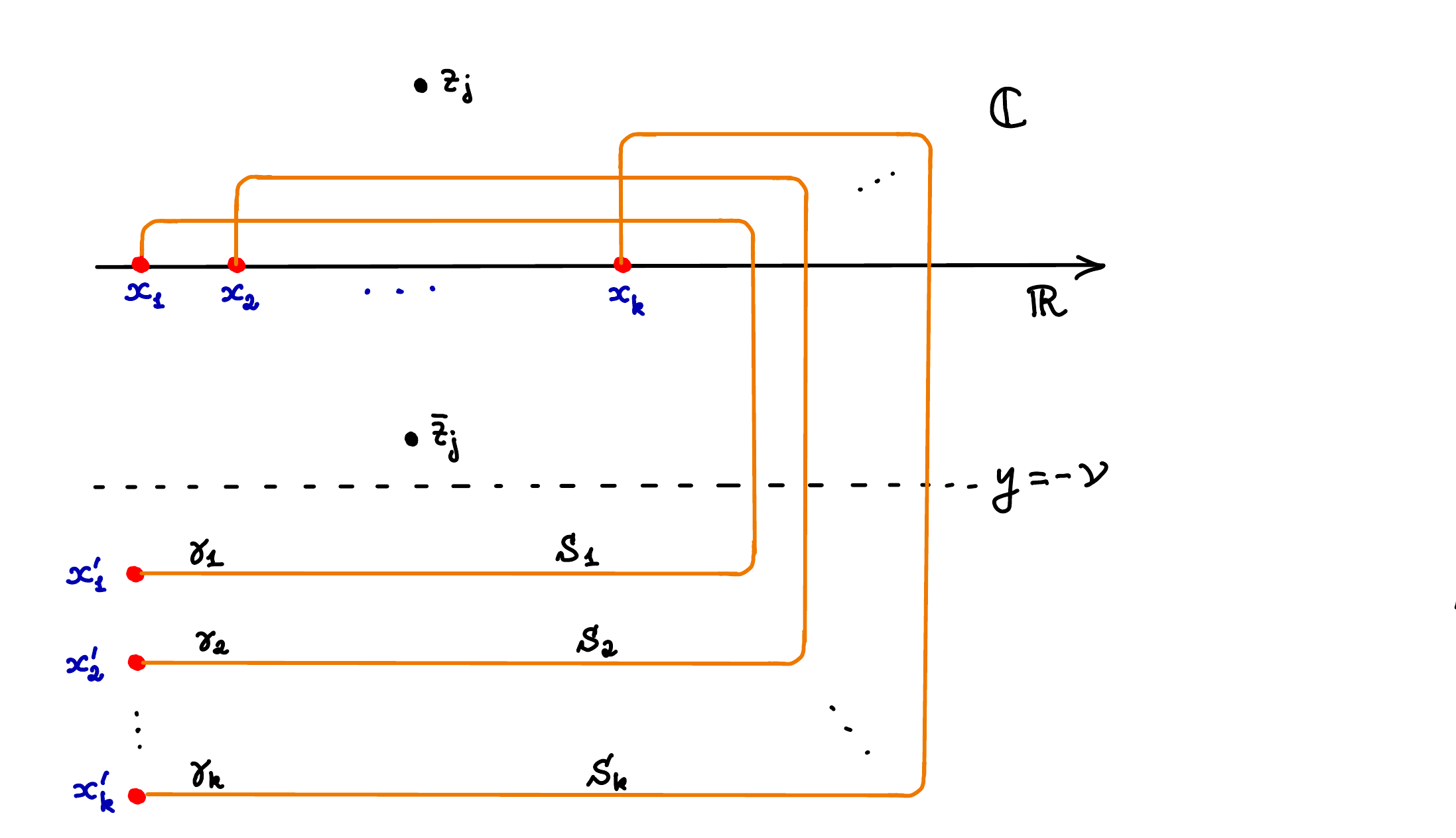}
  \end{center}
  \caption{The extended Lefschetz fibration $E'$ and the matching
    spheres $S_j$, projected to $\mathbb{C}$.}
  \label{f:ext-lef-1}
\end{figure}

\begin{proof}[Proof of Proposition~\ref{p:extended-1}]
  Statements~\eqref{i:E'},~\eqref{i:match-S} and~\eqref{i:E'-Stein}
  follow from the theory developed
  in~\cite[Sections~15d,~16e]{Se:book-fukaya-categ}.

  To prove~\eqref{i:omega-E'-exact} we begin by showing that
  $\omega_{E'}$ is exact. Denote
  $E^+ := E|_{\{-\nu < \text{Im} \, z\}}$.  Let
  $\gamma'_i \subset \mathbb{C}$ be the path obtained from $\gamma_i$
  by chopping a little neighborhood of its second end near $x'_i$,
  namely $\gamma'_i = \gamma_i \setminus D'_i$, where $D'_i$ is a
  little open disk around $x'_i$.
  Fix also another point $y_i \in \gamma_i \cap E^+$ which is
  different from $x_i$.

  Denote by $T_{x'_i} \subset E'$ the Lefschetz thimble emanating from
  $p_i$ along the path $\gamma'_i$ and by $T_{y_i} \subset T_{x'_i}$
  the part of that thimble lying over $\gamma'_i$, between $x_i$ and
  $y_i$. Denote by $\partial T_{x'_i}$ and $\partial T_{y_i}$ the
  boundaries of these ``partial'' thimbles. These are Lagrangian
  spheres in the fibers of $E'$ over $x'_i$ and $y_i$ respectively.

  By standard topological arguments there is a canonical isomorphism
  \begin{equation} \label{eq:iso-kappa}
    \kappa: H_2(E^+, \cup_{i=1}^k \partial T_{y_i})
    \longrightarrow H_2(E'),
  \end{equation}
  where the homologies are taken with any given coefficient
  group. This isomorphism is induced from the following chain-level
  map. Let $C$ be a relative cycle of
  $(E^+, \cup_{i=1}^k \partial T_{y_i})$. For $w \in \gamma_i$ denote
  by $\Pi_{\gamma_i}^{y_i, w} $ the parallel transport (with respect
  to the connection induced by $\omega_{E'}$) along $\gamma_i$ from
  $E'_{y_i}=E_{y_i}$ to $E'_{w}$. Take the part of $\partial C$ lying
  in $\partial T_{y_i}$ and consider its trail under this parallel
  transport from $y_i$ till $x'_i$, namely the union of
  $\Pi_{\gamma_i}^{y_i, w}(\partial C \cap \partial T_{y_i})$, where
  $w$ runs along $\gamma_i$ between $y_i$ and $x'_i$. Note that while
  $\Pi_{\gamma_i}^{y_i, w}$ is in general not defined for the end
  point $w=x'_i$, here we apply $\Pi_{\gamma_i}^{y_i, x'_i}$ to
  $\partial C \cap \partial T_{y_i}$ which yields the point $p'_i$.
  Therefore the trail of $\partial C \cap \partial T_{y_i}$ along
  $\gamma_i$ between $y_i$ and $x'_i$ is well defined and gives
  another relative cycle in $(E', \partial T_{y_i})$, which we denote
  by $\text{Tr}_{y_i, x'_i}(\partial C)$. Note that
  $\partial \text{Tr}_{y_i, x'_i}(\partial C) = -(\partial C \cap
  \partial T_{y_i})$.

  We can now cap the trails $\text{Tr}_{y_i, x'_i}(\partial C)$,
  $i=1, \ldots, k$, to $C$ along $\partial C \cap \partial T_{y_i}$,
  and obtain at the end an absolute cycle $C'$ in $E'$. The map
  $\kappa$ is induced by the chain level map $C \longmapsto C'$.

  In order to show that $\omega_{E'}$ is exact, we will use the
  isomorphism $\kappa$, with coefficients in $\mathbb{R}$. It is
  enough to prove that $\langle [\omega_{E'}], \kappa(A) \rangle = 0$
  for every
  $A \in H_2(E^+, \cup_{i=1}^k \partial T_{y_i}; \mathbb{R})$. For
  this end, note that $\omega_{E'}$ vanishes over each of the trails
  $\text{Tr}_{y_i, x'_i}(\partial C)$, hence
  $$\langle [\omega_{E'}], \kappa(A) \rangle = \langle [\omega_{E'}],
  A \rangle = \langle [\omega_{E}], A \rangle,$$ where the last
  equality holds because $\omega_{E'}|_{E^+} = \omega_{E}|_{E^+}$.
  Now $\omega_{E} = d\lambda_{E}$, hence
  \begin{equation} \label{eq:lambda-del-i} \langle [\omega_{E}], A
    \rangle = \sum_{i=1}^k \langle [\lambda_{E}|_{\partial T_{y_i}}],
    \partial_i A \rangle,
  \end{equation}
  where $\partial_i A$ is the component of $\partial A$ corresponding
  to $H_1(\partial T_{y_i}; \mathbb{R})$. But $T_{y_i}$ is clearly a
  $\lambda_E$-exact Lagrangian submanifold, thus the right-hand side
  of~\eqref{eq:lambda-del-i} vanishes. This completes the proof that
  $\omega_{E'}$ is exact.

  Next, we prove that $\omega_{E'}$ admits a primitive $\lambda_{E'}$
  that extends $\lambda_E|_{E^+}$. We claim that this would follow
  from the assertion that the map induced by inclusion
  $i_*: H_1(E^+; \mathbb{R}) \longrightarrow H_1(E'; \mathbb{R})$ is
  injective. Indeed, fix a small $\epsilon>0$ such that
  $\text{Im}\, x'_j < -(\nu+\epsilon)$ for all $j$, and write
  $E^+_{\epsilon} = E|_{-(\nu+\epsilon) < \text{Im} \, z}$. Denote by
  $i^{\epsilon}: E^+_{\epsilon} \longrightarrow E'$ the
  inclusion. Clearly $i_*$ is injective iff
  $i^{\epsilon}_*: H_1(E^+_{\epsilon}; \mathbb{R}) \longrightarrow
  H_1(E')$ is injective. Fix any primitive $\lambda'$ of $\omega_{E'}$
  and consider the $1$-form $\lambda_E|_{E^+} - \lambda'|_{E^+}$. This
  form is closed because $\omega_E|_{E^+} = \omega_{E'}|_{E^+}$. Since
  $i^{\epsilon}_*$ is injective, the restriction map
  $(i^{\epsilon})^*:H^1(E'; \mathbb{R}) \longrightarrow
  H^1(E_{\epsilon}^+; \mathbb{R})$ is surjective, hence there exists a
  closed $1$-form $\alpha'$ on $E'$ and a smooth function
  $f: E_{\epsilon}^+ \longrightarrow \mathbb{R}$ such that
  $\alpha'|_{E_{\epsilon}^+} = \lambda_E|_{E_{\epsilon}^+} -
  \lambda'|_{E_{\epsilon}^+} + df$. Now cut off the function $f$ in
  between $E^+$ and $E^+_{\epsilon}$ to obtain another function
  $f': E' \longrightarrow \mathbb{R}$ which coincides with $f$ on
  $E^+$ and vanishes outside of $E^+_{\epsilon}$. The desired $1$-form
  $\lambda_{E'}$ is then given by
  $$\lambda_{E'} := \alpha' + \lambda' - df'.$$

  To complete the proof it remains to show that
  \begin{equation} \label{eq:i-injective}
    i_*: H_1(E^+; \mathbb{R}) \longrightarrow H_1(E'; \mathbb{R})
  \end{equation}
  is injective. To this end, denote by $F = \pi^{-1}(w)$ the fiber
  of $\pi: E \longrightarrow \mathbb{R}$ over a regular value $w$ of
  $\pi$ with
  $w \in \{ z \in \mathbb{C} \mid \text{Im}\, z > -\nu \}$.
  
  Assume first that $\dim F>0$. By standard arguments, the
  inclusions $F \subset E^+$ and $F \subset E'$ induce isomorphisms
  $H_1(F) \cong H_1(E^+)$ and $H_1(F) \cong H_1(E')$, where the
  homologies are taken with arbitrary coefficients. Therefore
  $i_*: H_1(E^+) \longrightarrow H_1(E')$ is an isomorphism.

  Assume now that $\dim F = 0$. Choose a small $\epsilon>0$ such that
  all the critical values of $\pi$ are in
  $\{ \text{Im}\, z > -\nu + \epsilon\}$ and write
  $E'^{-} = E'|_{\text{Im}\, z < -\nu +\epsilon}$. Note that
  $E^+ \cap E'^{-}$ is homotopy equivalent to $F$ which is discrete,
  hence $H_1(E^+ \cap E'^{-}; \mathbb{R}) = 0$.  By the Mayer-Vietoris
  sequence for $E' = E^+ \cup E'^{-}$ it follows that
  $i_*: H_1(E^+; \mathbb{R}) \longrightarrow H_1(E'; \mathbb{R})$ is
  injective.

  This completes the proof of the injectivity of $i_*$
  in~\eqref{eq:i-injective} for all possible values of $\dim F$, hence
  also the proof of point~\eqref{i:omega-E'-exact} of the proposition.

  Point~\eqref{i:S-exact} is obvious if $\dim F > 0$ (since in that
  case $\dim(S_i) \geq 2$). Assume that $\dim F = 0$.  In this case
  $N \approx S^1$, and without loss of generality we may assume that
  the number of real critical values of $\pi$ is $k=2$. (This is not
  really essential for the rest of the proof, it just simplifies a bit
  the notation.) Let $\lambda_{E'}$ be a $1$-form from
  point~\eqref{i:omega-E'-exact}, whose existence we have just
  proved. In the course of the argument below we will need to alter
  this $1$-form, so we will denote it by $\lambda'$.

  Let $E^+_{\epsilon}$ be as earlier in the proof. Denote by
  $j_*^{\epsilon}: H_1(E^+_{\epsilon}; \mathbb{R}) \longrightarrow
  H_1(E, \partial T_{x'_1} \cup \partial T_{x'_2}; \mathbb{R})$,
  $i_*^{\epsilon}: H_1(E^+_{\epsilon}; \mathbb{R}) \longrightarrow
  H_1(E'; \mathbb{R})$ the maps induced by the inclusion
  $E^{+}_{\epsilon} \subset E'$. Similarly to the isomorphism
  from~\eqref{eq:iso-kappa} we have also an isomorphism
  $$\kappa: H_1(E, \partial T_{x'_1} \cup \partial T_{x'_2};
  \mathbb{R}) \longrightarrow H_1(E'; \mathbb{R})$$ which we continue
  denoting by $\kappa$ and which is defined by exactly the same means.

  Consider the homology classes $[S_1], [S_2] \in H_1(E'; \mathbb{R})$
  as well as the subspace
  $\textnormal{image\,} i^{\epsilon}_* \subset H_1(E'; \mathbb{R})$.
  We claim that no non-trivial linear combination of $[S_1], [S_2]$
  belongs to $\textnormal{image\,} i^{\epsilon}_*$. This can be easily
  seen by looking at the images of $\kappa^{-1} [S_1] = [T_{x'_1}]$,
  $\kappa^{-1} [S_2] = [T_{x'_2}]$ under the the connecting
  homomorphism
  $$\partial_*: H_1(E, \partial T_{x'_1} \cup \partial T_{x'_2};
  \mathbb{R}) \longrightarrow H_0(\partial T_{x'_1} \cup \partial
  T_{x'_2}; \mathbb{R}) = H_0(\partial T_{x'_1}; \mathbb{R}) \oplus
  H_0(\partial T_{x'_2}; \mathbb{R})$$ and noting that
  $\kappa^{-1}(\textnormal{image\,}i^{\epsilon}_*) =
  \textnormal{image\,} j^{\epsilon}_*$ is sent to $0$ by $\partial_*$.

  In view of the preceding claim we can find a closed $1$-form
  $\theta$ on $E'$ such that:
  \begin{enumerate}
  \item $[\theta] \in H^1(E'; \mathbb{R})$ vanishes on
    $\textnormal{image\,}i^{\epsilon}_*$.
  \item $\langle [\theta], [S_1] \rangle = \int_{S_1} \lambda'$ and
    $\langle [\theta], [S_2] \rangle = \int_{S_2} \lambda'$.
  \end{enumerate}
  By the property of $\theta$ we have
  $(i^{\epsilon})^* [\theta] = 0 \in H^1(E^+_{\epsilon}; \mathbb{R})$,
  hence there exists a smooth function
  $h: E^{+}_{\epsilon} \longrightarrow \mathbb{R}$ such that
  $\theta|_{E^+_{\epsilon}} = dh$. Now, cutoff $h$ near
  $\{ \text{Im}\, z = -\nu-\epsilon \}$ and extend the resulting
  function to a smooth function $h':E' \longrightarrow \mathbb{R}$
  which vanishes on $\{ \text{Im}\, z \leq -\nu - \epsilon\}$ and such
  that $h' = h$ on $E^+ = \{ \text{Im} z > -\nu \}$. Replacing the
  form $\lambda_{E'}$ provided by point~\eqref{i:omega-E'-exact} of
  the proposition by the form
  $$\lambda'' := \lambda' - \theta + dh'$$ we still obtain a primitive
  of $\omega_{E'}$ that coincides with $\lambda_{E}$ over $E^+$ and
  such that the matching spheres $S_1$, $S_2$ are
  $\lambda''$-exact. This completes the proof of
  point~\eqref{i:S-exact} of the proposition in case the fibers of
  $\pi: E \longrightarrow \mathbb{C}$ are $0$-dimensional.
\end{proof}


\section{Floer theory in $E$ versus $E'$} \label{s:Fl-E-E'}

Recall that the extended Lefschetz fibration
$\pi': E' \longrightarrow \mathbb{C}$ from~\S\ref{sb:extended} has
been constructed such that it coincides, together with its associated
structures, with the original Lefschetz fibration
$\pi: E \longrightarrow \mathbb{C}$ over
$\{ z\in \mathbb{C} \mid -\nu < \text{Im} z \}$.

Let $L_0, L_1 \subset E'$ be two marked exact Lagrangians and assume
that
$L_0, L_1 \subset E'|_{\{-\nu < \text{Im} z\}} = E|_{\{-\nu <
  \text{Im} z\}}$. By the arguments from~\cite{Bi-Co:lefcob-pub} the
Floer complexes of $(L_0, L_1)$ coincide, when viewed in $E$ and in
$E'$, provided we choose the right Floer data. More precisely, let $H$
be a Hamiltonian function compactly supported in
$E|_{\{-\nu < \text{Im} z\}}$. Then there exist regular Floer data
$\mathscr{D} = (H, J)$ in $E$ and $\mathscr{D}' = (H, J')$ in $E'$,
with the same Hamiltonian function $H$ such that all the Floer
trajectories for $(L_0,L_1)$ with respect to $\mathscr{D}$ coincide
with those for $\mathscr{D}'$ and they all lie inside
$E|_{\{-\nu < \text{Im} z\}}$. This easily follows from the open
mapping theorem for holomorphic functions, by choosing appropriate
compatible almost complex structures $J$ and $J'$ for which the
projections $\pi$ and $\pi'$ are holomorphic. Consequently we have a
chain isomorphism (induced by the identity map on $\mathcal{O}(H)$)
\begin{equation} \label{eq:CF-E-E'} CF(L_0, L_1; \mathscr{D}; E)
  \longrightarrow CF(L_0, L_1; \mathscr{D}'; E')
\end{equation}
which preserves the action filtration. The $E$ and $E'$ in the
notation of the Floer complexes in the preceding formula indicate the
ambient manifold in which the respective Floer complex is being
considered. Consequently~\eqref{eq:CF-E-E'} induces an action
preserving isomorphism of persistence modules
$$HF^{\leq \bullet}(L_0, L_1; E) \cong HF^{\leq \bullet}(L_0,L_1; E'),$$
hence the spectral invariants and boundary depths of $CF(L_0, L_1)$,
viewed either in $E$ or in $E'$, coincide.

The above can be generalized to the Fukaya categories of $E$ and
$E'$. More specifically, denote by $\fuk(E)$ and $\fuk(E')$ the Fukaya
categories of $E$ and $E'$, whose objects are the closed marked exact
Lagrangian submanifolds in $E$ and $E'$. Let
$\fuk(E; -\nu) \subset \fuk(E)$ be the full subcategory whose objects
are closed exact Lagrangians $L \subset E|_{\{-\nu < \text{Im}
  z\}}$. As explained in~\cite{Bi-Co:lefcob-pub} it is possible to
choose the auxiliary data required for the definitions of $\fuk(E)$
and $\fuk(E')$ in such a way that the inclusion of objects
$\textnormal{Ob}(\fuk(E; -\nu)) \subset \textnormal{Ob}(\fuk(E'))$
extends to a (homologically) full and faithful $A_{\infty}$-functor
$\text{Inc}: \fuk(E; -\nu) \longrightarrow \fuk(E')$. Moreover, if we
view $\fuk(E; -\nu)$ and $\fuk(E')$ as weakly filtered
$A_{\infty}$-categories, we can assume that the functor $\text{Inc}$
is a weakly filtered functor (see~\S\ref{sb:wf-f}
and~\S\ref{sb:wf-ai-mod} for a brief explanation of these concepts,
and~\cite[\S2]{Bi-Co-Sh:lshadows-long} for the precise definitions and
more details).

This has the following consequence for $A_{\infty}$-modules. Let
$L \subset E'$ be a marked exact Lagrangian and assume that
$L \subset E|_{\{-\nu < \text{Im} z\}}$. Denote by $\mathscr{L}^{E'}$
the Yoneda module of $L$, viewed as an $A_{\infty}$-module over
$\fuk(E')$ and by $\mathscr{L}^{E, -\nu}$ the Yoneda module of $L$
over $\fuk(E; -\nu)$. Both modules are weakly filtered in the sense
of~\cite{Bi-Co-Sh:lshadows-long} and with the right choices of
auxiliary data for $\fuk(E; -\nu)$, $\fuk(E')$ we have that
$$\text{Inc}^*(\mathscr{L}^{E'}) = \mathscr{L}^{E, -\nu}$$
as weakly filtered $\fuk(E; -\nu)$-modules.

Next, we compare the Floer theory of the matching spheres $S_j$ in
$E'$ with the Floer theory of the thimbles $\thmb_{x_j}$ in $E$,
defined on page~\pageref{p:thimble-x}. Fix a rectangle
$\mathcal{R} \subset \mathbb{C}$ of the type
\begin{equation} \label{eq:rectangle} \mathcal{R} = \{x + iy \in
  \mathbb{C} \mid x \in (a,b), -\nu < y < \epsilon \}
\end{equation}
such that
$S_j \cap \pi'^{-1}(\mathcal{R}) = \thmb_{x_j} \cap
\pi^{-1}(\mathcal{R})$. (See Figure~\ref{f:ext-lef-2}.)

\begin{figure}[htbp]
  \begin{center}
    \includegraphics[scale=0.68]{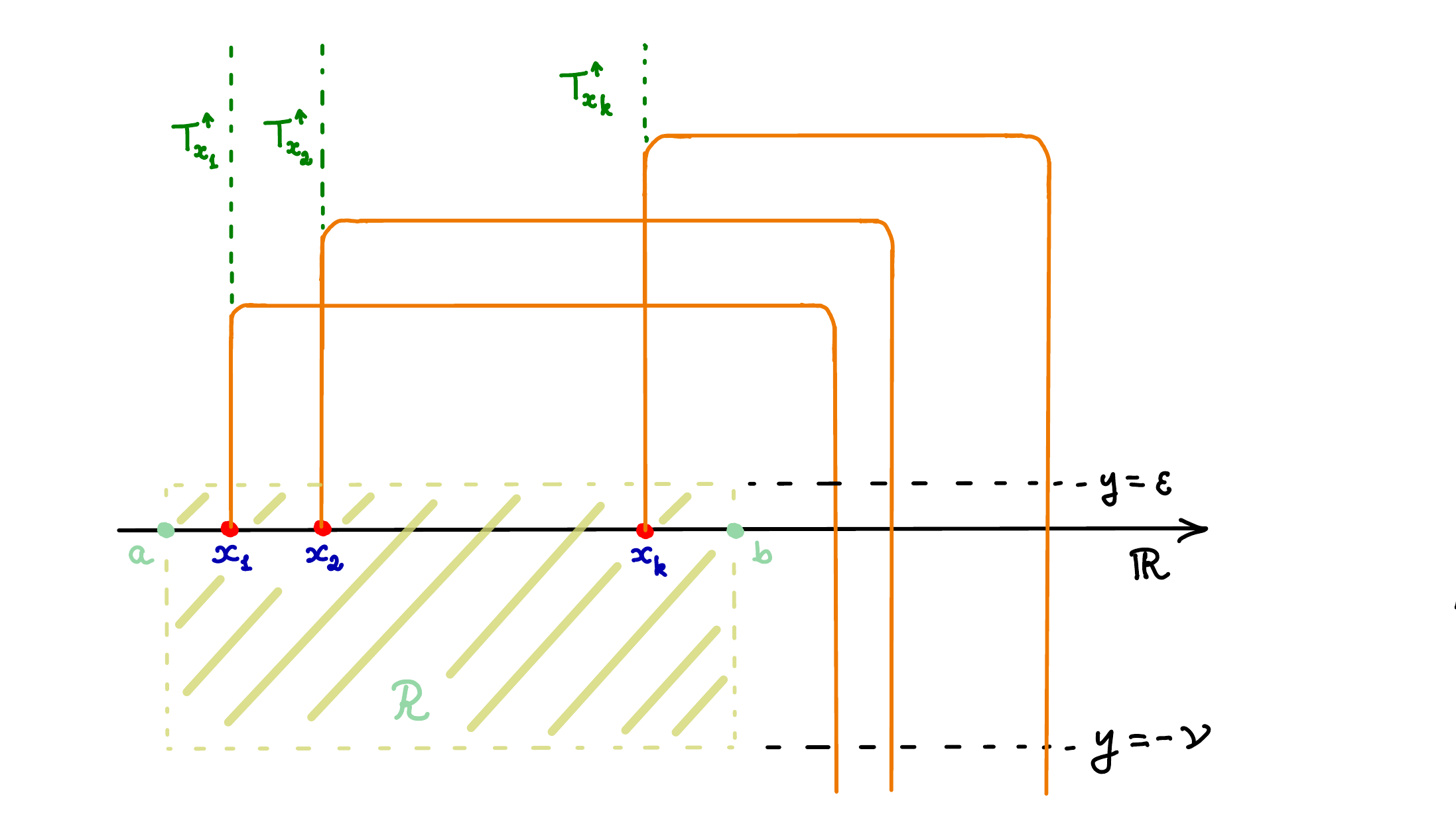}
  \end{center}
  \caption{The rectangle $\mathcal{R}$ and the projection to
    $\mathbb{C}$ of the thimbles $\thmb_{x_j}$.}
  \label{f:ext-lef-2}
\end{figure}

Let $L \subset E'$ be a marked exact Lagrangian and assume that
$\pi'(L) \subset \mathcal{R}$. Let $H$ be a Hamiltonian function
compactly supported in $\pi^{-1}(\mathcal{R})$. Then there exist
almost complex structures $J$ on $E$ and $J'$ on $E'$, compatible with
$\omega_E$ and $\omega_{E'}$ respectively, making the Floer data
$\mathscr{D} = (H,J)$ and $\mathscr{D}'=(H,J')$ regular and such that
the Floer trajectories for $(L,S_j;\mathscr{D}')$ in $E'$ and the
Floer trajectories of $(L, \thmb_{x_j}; \mathscr{D})$ in $E$ coincide
and moreover all these trajectories lie inside
$\pi^{-1}(\mathcal{R})$. This follows again from an open mapping
theorem argument as in~\cite{Bi-Co:lefcob-pub}.

It follows that the identity map on $\mathcal{O}(H)$ gives an action
preserving chain isomorphism
$$CF(L, S_j; \mathscr{D}'; E') \longrightarrow CF(L,
\thmb_{x_j}; \mathscr{D}; E).$$ Here we view $\thmb_{x_j} \subset E$
as a marked exact Lagrangian with primitive function adjusted such
that it coincides with the given primitive function of $S_j$ along
$S_j \cap \pi'^{-1}(\mathcal{R}) = \thmb_{x_j} \cap
\pi^{-1}(\mathcal{R})$.

Denote by $\fuk(E; \mathcal{R}) \subset \fuk(E')$ the full subcategory
whose objects are marked exact Lagrangians $L$ with
$\pi(L) \subset \mathcal{R}$. Similarly to $\text{Inc}$ we have weakly
filtered inclusion $A_{\infty}$-functors
$\text{Inc}_{\mathcal{R}, -\nu}: \fuk(E; \mathcal{R}) \longrightarrow
\fuk(E; -\nu)$ and
$\text{Inc}_{\mathcal{R}, E'}: \fuk(E; \mathcal{R}) \longrightarrow
\fuk(E')$ with
$\text{Inc}_{\mathcal{R}, E'} = \text{Inc} \circ
\text{Inc}_{\mathcal{R}, -\nu}$.

Putting all these constructions together we deduce:

\ocnote{
\begin{lem}\label{lem:inclusion-f}
Let $\mathcal{S}_j$  be the Yoneda module of $S_j$ and let $\tthmb_{x_j}$ be 
the Yoneda module of $\thmb_{x_j}$, the latter being
viewed as a module over $\fuk(E; -\nu)$. With the appropriate choice
of auxiliary data, we have
\begin{equation} \label{eq:S-T} \text{Inc}_{\mathcal{R},
    E'}^*(\mathcal{S}_j) = \text{Inc}_{\mathcal{R},
    -\nu}^*(\tthmb_{x_j})
\end{equation}
as weakly filtered $\fuk(E; \mathcal{R})$-modules.
\end{lem}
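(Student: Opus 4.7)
The strategy is to unpack both sides of~\eqref{eq:S-T} as weakly filtered $\fuk(E;\mathcal{R})$-modules and then show, component by component, that with an appropriate coherent choice of auxiliary data the underlying chain complexes and all module structure maps agree on the nose via the identity on generators. By definition, the pullback module $\text{Inc}_{\mathcal{R},E'}^*(\mathcal{S}_j)$ assigns to a tuple $(L_0,\ldots,L_d)$ of objects of $\fuk(E;\mathcal{R})$ the space $CF(L_d,S_j)$ (computed in $E'$), with module operations obtained from the $A_\infty$-operations $\mu^{E'}_{d+1}$ applied with inputs in $\hom_{\fuk(E')}(L_{i-1},L_i)$ and $\hom_{\fuk(E')}(L_d,S_j)$; the analogous description holds for $\text{Inc}_{\mathcal{R},-\nu}^*(\tthmb_{x_j})$, using $\mu^E_{d+1}$ and $\hom$-spaces in $\fuk(E;-\nu)$.

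The first step is to specify the auxiliary data. Choose Hamiltonian perturbations for all tuples $(L_0,\ldots,L_d,S_j)$ (resp.~$(L_0,\ldots,L_d,\thmb_{x_j})$) to be compactly supported in $\pi^{-1}(\mathcal{R})$, and identical in the two categories (this is possible since $\pi^{-1}(\mathcal{R}) \subset E \cap E'$ and the symplectic/Liouville data agree there). For the almost complex structures, pick a domain-dependent family making the projection $\pi'$ (resp.~$\pi$) holomorphic outside a compact region in $\pi^{-1}(\mathcal{R})$; by standard transversality one may arrange regularity while preserving this property. With these choices, the resulting identity map on generators gives a tautological bijection of the Floer data, hence an action-preserving bijection on the underlying $\mathbb{Z}_2$-vector spaces $CF(L_d,S_j)\cong CF(L_d,\thmb_{x_j})$, and similarly at the level of morphism spaces $\hom(L_{i-1},L_i)$ in the two categories (which coincide by the discussion preceding the lemma).

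The second, and main, step is to show that the moduli spaces defining the $\mu$-operations agree. Consider a Floer polygon $u:S\longrightarrow E'$ with boundary components on $L_0,\ldots,L_d,S_j$, whose $S_j$-input/output lies in $S_j\cap \pi'^{-1}(\mathcal{R})=\thmb_{x_j}\cap\pi^{-1}(\mathcal{R})$ and whose $L_i$-components land in the $L_i$, which project into $\mathcal{R}$. Since the perturbation data is supported in $\pi^{-1}(\mathcal{R})$ and $\pi'$ is holomorphic outside, the composition $\pi'\circ u$ satisfies an inhomogeneous Cauchy--Riemann equation with perturbation supported where $\pi'\circ u\in\mathcal{R}$; applying the open mapping principle, exactly as in the argument preceding the lemma and as in~\cite{Bi-Co:lefcob-pub}, forces $\text{image}(u)\subset\pi^{-1}(\mathcal{R})$. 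The same polygon is then also a solution of the Floer equation in $E$ (since the structures agree there), and conversely. Thus the moduli spaces defining $\mu^{E'}_{d+1}$ and $\mu^{E}_{d+1}$ for these tuples coincide, so the module operations of both sides of~\eqref{eq:S-T} are identified by the identity on generators. Action filtrations are preserved tautologically because the action functional is computed from $\lambda_{E'}=\lambda_E$ over $\pi^{-1}(\mathcal{R})$ and identical primitives for the Lagrangians, so the identification is one of weakly filtered modules.

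The main obstacle to carry out carefully is the coherence of the choices: one must arrange the perturbation data and almost complex structures simultaneously for all tuples in $\fuk(E;\mathcal{R})$ (in both $E$ and $E'$) so that regularity holds, the confinement property above is valid, and the weakly filtered $A_\infty$-structure equations are satisfied on both sides. This is standard (as in~\cite{Bi-Co:lefcob-pub} and~\cite[\S2]{Bi-Co-Sh:lshadows-long}): one proceeds by induction on $d$, at each stage extending the choices made for smaller strata to the universal family of perturbation data, using that the confinement condition is an open and cobordism-stable property. Once this is done, the identity on generators provides the required equality of weakly filtered $\fuk(E;\mathcal{R})$-modules.
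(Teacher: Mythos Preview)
Your proposal is correct and follows the same approach as the paper: confinement of Floer polygons to $\pi^{-1}(\mathcal{R})$ via the open mapping theorem for the holomorphic projections $\pi$, $\pi'$, combined with the fact that all structures (symplectic form, Liouville form, Lagrangians $S_j\cap\pi'^{-1}(\mathcal{R})=\thmb_{x_j}\cap\pi^{-1}(\mathcal{R})$, perturbation data) agree over $\pi^{-1}(\mathcal{R})$. In fact the paper does not give a separate proof of the lemma at all---it simply states that the lemma follows by ``putting all these constructions together'', referring to the discussion immediately preceding it in~\S\ref{s:Fl-E-E'}; your write-up is an explicit unpacking of precisely that discussion, including the coherence of auxiliary data needed for the $A_\infty$-module structure.
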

}


\section{Cone decompositions in Lefschetz
  fibrations} \label{s:cone-decomp} Recall
from~\cite{Bi-Co:lefcob-pub} that the Yoneda modules associated to
closed Lagrangian submanifolds (or more generally Lagrangian
cobordisms), satisfying appropriate exactness or monotonicity
conditions, in a Lefschetz fibration $E$ can be represented as
iterated cones of modules involving the matching spheres $S_j$ in the
extended Lefschetz fibration $E'$. We will apply these results below,
to the fibrations $E$ and $E'$ constructed
in~\S\ref{sb:rlef}~-~\S\ref{sb:extended} above, while also keeping
track of the action filtrations.

Let $\pi: E \longrightarrow \mathbb{R}$ be a real Lefschetz fibration
with critical values
$x_1, \ldots, x_k, z_1, \widebar{z}_1, \ldots, z_l, \widebar{z}_l$ and
let $\pi':E' \longrightarrow \mathbb{C}$ be the extended Lefschetz
fibration, as in~\S\ref{sb:extended}. Fix $\epsilon>0$ with
$\epsilon < | \text{Im}\, z_j|$ for every $j$. Let $K \subset E$ be a
closed $\lambda_E$-exact Lagrangian submanifold and assume that
$K \subset E|_{\{ |\text{Im}| \, z < \epsilon\}}$. Consider the
matching spheres $S_j \subset E'$ and denote by
$\tau_{S_j}: E' \longrightarrow E'$ the Dehn-twist around $S_j$,
supported in a small neighborhood of $S_j$. Note that $\tau_{S_j}$ is
well defined up to Hamiltonian isotopy (supported near $S_j$) since
the sphere $S_j$, being a matching sphere, has a canonical smooth
identification with $S^n$ ($2n = \dim_{\mathbb{R}}E$) up to smooth
isotopy.

Put $K^{(0)} := K$, $K^{(j)} := \tau_{S_j}(K^{(j-1)})$,
$j = 1, \ldots, k$. We view these Lagrangians as objects of the
$\lambda_{E'}$-exact Fukaya category $\fuk(E')$ of $E'$. Denote by
$\mathcal{K}^{(j)}$ the Yoneda modules associated to $K^{(j)}$,
$j=0, \ldots, k$. Write also $\mathcal{K} := \mathcal{K}^{(0)}$ for
the Yoneda module of $K$ and denote by $\mathcal{S}_j$,
$j=1, \ldots, k$, the Yoneda modules associated to the matching
spheres $S_j$.

By the results of~\cite{Bi-Co:lefcob-pub}, $\mathcal{K}$ is
quasi-isomorphic, in the $A_{\infty}$-category of modules over
$\fuk(E')$, to the following iterated cone of $\fuk(E')$-modules:
\begin{equation} \label{eq:L-cone} \mathcal{K} \cong [\mathcal{B}_1
  \longrightarrow \cdots \longrightarrow \mathcal{B}_k \longrightarrow
  \mathcal{K}^{(k)}],
\end{equation}
where each of the modules $\mathcal{B}_j$, $j=1, \ldots, k$, has
itself an iterated cone decomposition of the following type:
\begin{equation} \label{B_j-cone} \mathcal{B}_j = [\mathcal{S}_j
  \otimes CF(S_j,K) \longrightarrow \mathcal{B}_{j,1} \longrightarrow
  \mathcal{B}_{j,2} \longrightarrow \cdots \longrightarrow
  \mathcal{B}_{j, j-1}].
\end{equation}
In order to describe the modules $\mathcal{B}_{j,d}$,
$1 \leq d \leq j-1$, that appear in~\eqref{B_j-cone} we need a bit of
notation. Denote by $\mathcal{I}_{d, j-1}$ the set of all
multi-indices $\underline{i} = (i_1, \ldots, i_d)$ with
$1 \leq i_1 < i_2 < \cdots < i_d \leq j-1$. We order the elements of
$\mathcal{I}_{d, j-1}$ by the lexicographic order. For each
multi-index $\underline{i} \in \mathcal{I}_{d, j-1}$ put
\begin{equation} \label{eq:Cij}
  \mathcal{C}_{\underline{i}, j} := \mathcal{S}_j \otimes
  CF(S_j, S_{i_d}) \otimes CF(S_{i_d}, S_{i_{d-1}}) \otimes \cdots
  \otimes CF(S_{i_2}, S_{i_1}) \otimes CF(S_{i_1}, K).
\end{equation}
Let $m_{d, j-1} := \# \mathcal{I}_{d, j-1}$ and order the elements of
$\mathcal{I}_{d, j-1} = \{ \underline{i}^{(1)}, \ldots,
\underline{i}^{(m_{d, j-1})}\}$ in such a way that
$\underline{i}^{(1)} \precneqq \underline{i}^{(2)} \precneqq \cdots
\precneqq \underline{i}^{(m_{d, j-1})}$. Then
\begin{equation} \label{eq:Bjd}
  \mathcal{B}_{j, d} = [\mathcal{C}_{\underline{i}^{(1)}, j} \longrightarrow
  \mathcal{C}_{\underline{i}^{(2)}, j} \longrightarrow \cdots
  \longrightarrow \mathcal{C}_{\underline{i}^{(m_{d, j-1})}, j}].
\end{equation}

Having established a cone decomposition of the module $\mathcal{K}$
over the $A_{\infty}$-category $\fuk(E')$ we consider its pull-back to
Fukaya categories associated to $E$. Recall from~\S\ref{s:Fl-E-E'}
that we have the Fukaya categories $ \fuk(E; \mathcal{R})$ and
$\fuk(E; -\nu)$. We take the rectangle $\mathcal{R}$
from~\eqref{eq:rectangle} to be wide enough such that it contains
$\pi(K)$. Recall also the inclusion functor
$$\text{Inc}_{\mathcal{R}, E'}: \fuk(E; \mathcal{R}) \longrightarrow
\fuk(E')$$ that factors as the composition
$\text{Inc}_{\mathcal{R}, E'} = \text{Inc} \, \circ \,
\text{Inc}_{\mathcal{R}, -\nu}$ of the two functors
$$\text{Inc}_{\mathcal{R}, -\nu}: \fuk(E; \mathcal{R}) \longrightarrow
\fuk(E; -\nu), \quad \text{Inc}: \fuk(E; -\nu) \longrightarrow
\fuk(E').$$

By pulling back the cone decomposition~\eqref{eq:L-cone} via
$\text{Inc}_{\mathcal{R}, E'}^*$ we obtain a similar cone
decomposition for $\mathcal{K}$ (now viewed as a module over
$\fuk(E; \mathcal{R})$), where the modules $\mathcal{S}_j$
in~\eqref{B_j-cone} and~\eqref{eq:Cij} are replaced by
$\text{Inc}^*_{\mathcal{R}, -\nu}(\tthmb_{x_j})$,
see~\eqref{eq:S-T}. (Note that the terms involving the Floer complexes
of $S_j$ and and of $S_{i_l}$ remain unchanged.)

Finally, we claim that the pullback
$\text{Inc}_{\mathcal{R}, E'}^* \mathcal{K}^{(k)}$ of the the module
$\mathcal{K}^{(k)}$ which appears last in~\eqref{eq:L-cone} is
acyclic.

We will outline below in~\S\ref{sb:ex-t-Dehn} the proof of the cone
decomposition~\eqref{eq:L-cone}, the
expressions~\eqref{B_j-cone}~-~\eqref{eq:Bjd} as well as the
acyclicity of $\text{Inc}_{\mathcal{R}, E'}^* \mathcal{K}^{(k)}$.
Then in~\S\ref{sb:filt-cd-Lef} and~\S\ref{sb:prfs-filt-cd} we will
refine these results to take into account also the action filtrations.

Before we turn to these details, here is a concrete example showing
how the cone decomposition of $\mathcal{K}$ looks like in case the
number of real critical values of $\pi$ is $k=3$:
\begin{equation*}
  \begin{aligned}
    \mathcal{K} \cong [ & \mathcal{S}_1 \otimes CF(S_1, K)
    \longrightarrow \\
    & \mathcal{S}_2 \otimes CF(S_2, K) \longrightarrow \mathcal{S}_2
    \otimes
    CF(S_2,S_1) \otimes CF(S_1, K) \longrightarrow \\
    & \mathcal{S}_3 \otimes CF(S_3, K) \longrightarrow \mathcal{S}_3
    \otimes CF(S_3,S_2) \otimes CF(S_1,K) \longrightarrow
    \mathcal{S}_3 \otimes CF(S_3,S_2) \otimes CF(S_2,K)
    \longrightarrow \\
    & \mathcal{S}_3 \otimes CF(S_3,S_2) \otimes CF(S_2, S_1) \otimes
    CF(S_1, K) \longrightarrow \mathcal{K}^{(3)}]
  \end{aligned}
\end{equation*}

\subsection{Exact triangles associated to Dehn
  twists} \label{sb:ex-t-Dehn} Let $(X^{2n}, \omega = d\lambda)$ be a
Liouville domain and $S^n \xrightarrow{\; \approx \;} S \subset X$ a
parametrized Lagrangian sphere. In case $n=1$ we additionally assume
that $S$ is $\lambda$-exact. Let $\tau := \tau_S: X \longrightarrow X$
be a symplectomorphism, supported in $\textnormal{Int\,} X$, which
represents the symplectic mapping class of the Dehn twist around
$S$. Note that $\tau$ is an exact symplectomorphism, hence sends exact
Lagrangians to exact Lagrangians.

A well known result of Seidel~\cite{Se:long-exact,
  Se:book-fukaya-categ} says that for every exact Lagrangian
$Q \subset X$ there is the following distinguished triangle in the
derived Fukaya category $\fuk(X)$:

\begin{equation} \label{eq:ex-tr} \xymatrix{
    \mathcal{S} \otimes CF(S,Q) \ar[r] & \tau(\mathcal{Q}) \ar[d] \\
    & \mathcal{Q} \ar[ul] }
\end{equation}
Here $\mathcal{S}$, $\mathcal{Q}$ and $\tau(\mathcal{Q})$ stand for
the $A_{\infty}$-modules corresponding to $S$, $Q$ and $\tau(Q)$ under
the Yoneda embedding.

The above distinguished triangle implies that, up to a
quasi-isomorphism of modules, $\mathcal{Q}$ can be expressed as the
following mapping cone:
\begin{equation} \label{eq:Q-mc}
  \mathcal{Q} \cong [\mathcal{S} \otimes CF(S,Q) \longrightarrow
  \tau(\mathcal{Q})].
\end{equation}
By rotating~\eqref{eq:ex-tr} we obtain also the following
quasi-isomorphism:
\begin{equation} \label{eq:Q-mc-2} \tau(\mathcal{Q}) \cong
  [\mathcal{Q} \longrightarrow \mathcal{S} \otimes CF(S,Q)].
\end{equation}
Note that here and in what follows we work in an ungraded setting,
hence no grading shifts appear in any
of~\eqref{eq:ex-tr}~-~\eqref{eq:Q-mc-2}.

We now turn to the cone decomposition~\eqref{eq:L-cone}, and assume
that $(X, d\lambda) = (E', \lambda_{E'})$ as in~\S\ref{sb:extended}.
The decomposition~\eqref{eq:L-cone} follows by successively
applying~\eqref{eq:Q-mc} and~\eqref{eq:Q-mc-2}. Specifically, we begin
with $\mathcal{K}^{(1)} = \tau_{S_1}(\mathcal{K})$ and obtain
from~\eqref{eq:Q-mc}:
\begin{equation} \label{eq:K-1} \mathcal{K} \cong [\mathcal{S}_1
  \otimes CF(S_1, K) \longrightarrow \mathcal{K}^{(1)}].
\end{equation}
By the same argument we also have
$\mathcal{K}^{(1)} \cong [\mathcal{S}_2 \otimes CF(S_1, K^{(1)})
\longrightarrow \mathcal{K}^{(2)}]$, which together
with~\eqref{eq:K-1} gives:
\begin{equation} \label{eq:K-2} \mathcal{K} \cong [\mathcal{S}_1
  \otimes CF(S_1, K) \longrightarrow \mathcal{S}_2 \otimes CF(S_2,
  K^{(1)}) \longrightarrow \mathcal{K}^{(2)}].
\end{equation}
But by~\eqref{eq:Q-mc-2} we have
$\mathcal{K}^{(1)} \cong [\mathcal{K} \longrightarrow \mathcal{S}_1
\otimes CF(S_1, K)]$. Substituting this into~\eqref{eq:K-2} yields:
\begin{equation} \label{eq:K-3} \mathcal{K} \cong [\mathcal{S}_1
  \otimes CF(S_1, K) \longrightarrow \mathcal{S}_2 \otimes CF(S_2, K)
  \longrightarrow \mathcal{S}_2 \otimes CF(S_2, S_1) \otimes CF(S_1,K)
  \longrightarrow \mathcal{K}^{(2)}]. 
\end{equation}
Continuing in a similar vein, decomposing $\mathcal{K}^{(2)}$,
$\mathcal{K}^{(3)}$ etc.~ we obtain the cone
decomposition~\eqref{eq:L-cone} with items as described
in~\eqref{B_j-cone}~-~\eqref{eq:Bjd}.

It remains to address the acyclicity of the module
$\text{Inc}_{\mathcal{R}, E'}^* \mathcal{K}^{(k)}$. \label{pp:acyclic}
(Recall $K^{(k)} = \tau_{S_k} \cdots \tau_{S_1} (K)$). This follows
from~\cite[\S4.4]{Bi-Co:lefcob-pub}, where it is proved that there is
a Hamiltonian diffeomorphism $\phi: E' \longrightarrow E'$ such that
$\phi(K^{(k)}) \subset E'|_{\{\text{Im}\, z \leq -\nu\}}$. (See
also~\cite{Bi-Co:lcob-fuk-arxiv} for more details.) In particular, for
every Lagrangian submanifold $\vbl \subset \pi'^{-1}(\mathcal{R})$ we
have $CF(\vbl, \phi(K^{(k)})) = 0$.

\subsection{Taking filtrations into account} \label{sb:filt-cd-Lef} We
now go back to the cone decomposition~\eqref{eq:L-cone} and review it
from the perspective of action filtrations.

From now on we assume all the exact Lagrangian submanifolds to be
marked, unless otherwise stated. By a slight abuse of notation, we now
redefine the objects of the Fukaya categories $\fuk(E)$, $\fuk(E')$,
as well as $\fuk(E; \mathcal{R})$, $\fuk(E; -\nu)$, to be {\em marked}
exact Lagrangians, subject to the additional constraints in each of
these categories. These categories now become weakly filtered
$A_{\infty}$-categories, where the filtrations are induced by the
action functional. We refer the reader
to~\cite[\S2]{Bi-Co-Sh:lshadows-long} for the definitions and basic
theory of weakly filtered $A_{\infty}$-categories and weakly filtered
modules over such.

Below we will take the exact Lagrangian
$K \subset E|_{\{ |\text{Im}| \, z < \epsilon\}}$ to have an arbitrary
marking. This marking induces a marking on
$K^{(j)} = \tau_{S_j} \cdots \tau_{S_1} (K)$, $j=1, \ldots, k$,
see~\S\ref{sb:prfs-filt-cd}, page~\pageref{pp:mark-tau}. The
Lagrangian spheres $S_j$ are also assumed to be marked in advance.

Note that all the items in the cone decomposition~\eqref{eq:L-cone},
as detailed in~\eqref{B_j-cone}~-~\eqref{eq:Bjd} are weakly filtered
modules. This is so because the $\mathcal{S}_j$'s and
$\mathcal{K}^{(k)}$ are Yoneda modules over a weakly filtered
$A_{\infty}$-category, and the chain complexes
$CF(S_{i_l}, S_{i_{l-1}})$ and $CF(S_j, K)$ are filtered.

Next, we claim that all the maps in the iterated
cones~\eqref{eq:L-cone},~\eqref{B_j-cone} and~\eqref{eq:Bjd} are
weakly filtered maps. This means, in particular, that when evaluating
these iterated cones modules on a given exact Lagrangian $\vbl$, each
of these maps specializes to a filtered chain map that shifts
filtrations by an amount bounded from above {\em uniformly in $\vbl$}.
More specifically:

\ocnote{
\begin{prop}\label{prop:filtered-cone} In the iterated cone~\eqref{eq:Bjd} 
\begin{equation} \label{eq:Bjd-2} \mathcal{B}_{j, d} =
  [\mathcal{C}_{\underline{i}^{(1)}, j} \xrightarrow{\; \varphi_{1,j}
    \;} [\mathcal{C}_{\underline{i}^{(2)}, j} \xrightarrow{\;
    \varphi_{2,j}\;} [ \cdots \xrightarrow{\; \; \; \;}
  [\mathcal{C}_{\underline{i}^{(m_{d, j-1}-1)}, j} \xrightarrow{\;
    \varphi_{m_{d,j-1}-1, j}\;}
  \mathcal{C}_{\underline{i}^{(m_{d,j-1})}, j}] \cdots ]]],
\end{equation}
each of the module homomorphisms $\varphi_{l,j}$ is weakly filtered,
and shifts action by $\leq s_{\varphi_{l,j}}$, for some
$s_{\varphi_{l,j}} \geq 0$.
\end{prop}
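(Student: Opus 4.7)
The plan is to establish this by combining a weakly filtered enhancement of Seidel's exact triangle~\eqref{eq:ex-tr} for Dehn twists with the weakly filtered $A_\infty$-structure of $\fuk(E')$, and then iterating the unfolding argument of~\S\ref{sb:ex-t-Dehn} while tracking action shifts step by step. The starting point is that $\fuk(E')$, viewed as a weakly filtered $A_\infty$-category in the sense of~\cite[\S2]{Bi-Co-Sh:lshadows-long}, has all higher products $\mu^d$ shifting action by a constant depending only on $d$, and hence the same is true of any morphism built from finitely many applications of these products.

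The first step would be to verify that Seidel's exact triangle admits a chain-level realization in which both the evaluation map $\mathcal{S} \otimes CF(S,Q) \to \mathcal{Q}$ and the comparison maps relating $\mathcal{Q}$, $\tau(\mathcal{Q})$ and $\mathcal{S} \otimes CF(S,Q)$ are weakly filtered morphisms of modules. The evaluation map is dictated by the higher products $\mu^d$ and is manifestly weakly filtered. The other maps arise from Seidel's moduli spaces of pseudo-holomorphic sections near $S$, which are compact and localized in a fixed neighborhood of the sphere, so the chain-level morphisms they induce shift action by an amount depending only on $S$ and the chosen auxiliary data, not on the test Lagrangian $\vbl$. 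In parallel one must fix the markings on $K^{(j)} = \tau_{S_j}(K^{(j-1)})$ by pulling back along Hamiltonian isotopies generating $\tau_{S_j}$ that are supported in fixed neighborhoods of the $S_j$'s; this contributes additional action shifts bounded uniformly in $\vbl$ and in $K$, depending only on the chosen generating Hamiltonians.

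Having a weakly filtered version of~\eqref{eq:Q-mc} and~\eqref{eq:Q-mc-2} at hand, one would then iterate the unfolding exactly as in~\eqref{eq:K-1}--\eqref{eq:K-3} and beyond, using the general fact that the mapping cone of a weakly filtered module morphism is itself weakly filtered, with constants that add up under composition. Each $\mathcal{B}_{j,d}$ emerges at the end of this iteration as a finite composition of weakly filtered operations applied to tensor products of filtered Floer complexes with Yoneda modules. Consequently every connecting morphism $\varphi_{l,j}$ in~\eqref{eq:Bjd-2} is weakly filtered, with constant $s_{\varphi_{l,j}}$ bounded by a sum of finitely many contributions coming from the weakly filtered constants of the $\mu^d$'s (for $d$ bounded in terms of the number $k$ of real critical values of $\pi$), from the action shifts introduced by the Dehn twist markings, and from Seidel's chain-level comparisons near the $S_j$'s.

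The hard part will be the first step: establishing carefully that Seidel's triangle is weakly filtered at chain level, with action shift bounds that are uniform in the test object. This requires revisiting the moduli space constructions of~\cite{Se:long-exact, Se:book-fukaya-categ}, verifying their uniform energy estimates, and checking that the resulting chain-level morphisms inherit the weakly filtered property from the ambient Fukaya category. Once this geometric ingredient is in place, the remainder is essentially a bookkeeping exercise tracking filtration constants through the iteration of~\S\ref{sb:ex-t-Dehn}; the combinatorial complexity depends only on $k$ and $d$, so all the constants $s_{\varphi_{l,j}}$ come out bounded independently of the test Lagrangian.
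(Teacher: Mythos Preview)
Your overall strategy is correct and matches the paper's: the heart of the matter is a weakly filtered version of Seidel's Dehn-twist triangle, after which the iteration of~\S\ref{sb:ex-t-Dehn} together with Proposition~\ref{p:equiv-cones} and the remarks of~\S\ref{sb:icones} is indeed bookkeeping. Where you diverge is in the proof of the filtered triangle itself. You propose to revisit Seidel's original moduli spaces of sections from~\cite{Se:long-exact, Se:book-fukaya-categ} and extract uniform energy estimates directly. The paper instead uses the Mak-Wu Lagrangian cobordism $W \subset \mathbb{R}^2 \times X \times X^{-}$ from~\cite{Mak-Wu:Dehn-twist}: pulling back the Yoneda module $\mathcal{W}$ along the inclusion functors $\mathcal{I}_{\gamma,Q}$ and $\mathcal{I}_{\gamma',Q}$ for two isotopic curves $\gamma,\gamma' \subset \mathbb{R}^2$ yields the two sides of~\eqref{eq:Q-mc-3}, and the action shifts are read off by explicit calculations on intersection points of $(\gamma \times \vbl \times Q) \cap W$. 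The uniform bound on the shift of $\varphi$ then falls out of~\eqref{eq:s-vphi}, and the weight of the quasi-isomorphism is controlled by the shadow $S(W)$ of the cobordism via~\cite[\S4]{Bi-Co-Sh:lshadows-long}.

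The paper explicitly remarks (immediately after Lemma~\ref{lem:Dehn-filtered}) that your route via Seidel's construction, or the route via~\cite{Bi-Co:lefcob-pub}, would also yield a filtered triangle; the cobordism approach was chosen as the fastest in this context. The advantage of the cobordism method is that the action bounds become elementary computations with primitive functions (equations~\eqref{eq:A-P'}--\eqref{eq:A-R}) rather than energy estimates for perturbed holomorphic sections, and the $\vbl$- and $Q$-independence of the shift is transparent from the formulas. Your approach would work but would require you to carry out carefully the energy analysis you flag as ``the hard part'', whereas the paper sidesteps that analysis entirely.
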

}
This implies that the right-hand side of~\eqref{eq:Bjd-2} is
 filtered using the filtrations of the factors
$\mathcal{C}_{\underline{i}^{(l)}, j}$ and the
recipe~\eqref{eq:cone-filtration}.

In particular, for every exact Lagrangian $\vbl$, the module
homomorphism $\varphi_{l,j}$ specializes to an
$s_{\varphi_{l,j}}$-filtered chain map (still denoted by
$\varphi_{l,j}$):
$$\varphi_{l,j}: \mathcal{C}_{\underline{i}^{(l)}, j}(\vbl)
\longrightarrow [ \mathcal{C}_{\underline{i}^{(l+1)}, j}(\vbl)
\xrightarrow{\; \varphi_{l+1,j} \;} [ \cdots \xrightarrow{\; \; \; \;}
[\mathcal{C}_{\underline{i}^{(m_{d, j-1}-1)}, j}(\vbl) \xrightarrow{\;
  \varphi_{m_{d,j-1}-1,j}\;}
\mathcal{C}_{\underline{i}^{(m_{d,j-1})},j}(\vbl)] \cdots ]].$$ A
crucial point for us will be that the filtration-shifts
$s_{\varphi_{l,j}}$ are independent of $\vbl$.

Having filtered the modules $\mathcal{B}_{j,d}$, the preceding
statements apply also to the maps in the iterated cone
of~\eqref{B_j-cone}, and finally also to the right-hand side
of~\eqref{eq:L-cone}. We will prove Proposition \ref{prop:filtered-cone} in~\S\ref{sb:prfs-filt-cd} below.

Furthermore, we claim that the module quasi-isomorphism
at~\eqref{eq:L-cone} between $\mathcal{K}$ and the (now weakly
filtered) iterated cone on the right-hand side is filtered in the
following sense. 

\ocnote{
\begin{prop}\label{prop:quasi-shift} There exist $s_{\mathcal{K}} \geq 0$ and
weakly-filtered module homomorphisms
$$\varphi: \mathcal{K} \longrightarrow [\mathcal{B}_1 \longrightarrow
\cdots \longrightarrow \mathcal{B}_k \longrightarrow
\mathcal{K}^{(k)}], \quad \psi: [\mathcal{B}_1 \longrightarrow \cdots
\longrightarrow \mathcal{B}_k \longrightarrow \mathcal{K}^{(k)}]
\longrightarrow \mathcal{K}$$ that shift filtrations by
$\leq s_{\mathcal{K}}$ and such that
$$\varphi \circ \psi = \id + \mu_1^{\text{mod}}(h'), \quad 
\psi \circ \varphi = \id + \mu_1^{\text(mod)}(h'')$$ for weakly
filtered pre-module homomorphisms $h', h''$ that shift filtrations by
$\leq s_{\mathcal{K}}$. 
\end{prop}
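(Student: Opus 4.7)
The plan is to derive Proposition~\ref{prop:quasi-shift} from a filtered refinement of Seidel's exact triangle~\eqref{eq:ex-tr} associated to a single Dehn twist, and then to iterate. Concretely, I would first establish the following building block: for a marked Lagrangian sphere $S \subset E'$ and a marked exact Lagrangian $Q \subset E'$ (with the marking on $\tau_S(Q)$ inherited via the isotopy supporting $\tau_S$), there exists a constant $s_{S,Q} \geq 0$ and weakly filtered module homomorphisms
\begin{equation*}
\varphi_{S,Q}: \mathcal{Q} \longrightarrow [\mathcal{S}\otimes CF(S,Q) \longrightarrow \tau_S(\mathcal{Q})], \qquad \psi_{S,Q} \text{ in the reverse direction},
\end{equation*}
each shifting filtrations by $\leq s_{S,Q}$, together with filtered homotopies $h', h''$ (also shifting by $\leq s_{S,Q}$) realizing them as mutual inverses up to $\mu_1^{\tmod}$-boundary. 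Granting this, the cone decomposition~\eqref{eq:L-cone} is obtained exactly by the alternating substitution procedure described in~\S\ref{sb:ex-t-Dehn}, each step of which is a concatenation of such filtered quasi-isomorphisms with filtered cone manipulations (Proposition~\ref{prop:filtered-cone}). Since the process uses $k$ applications of the triangle, with the bookkeeping of markings on $K^{(j)}$ fixed as in~\S\ref{sb:prfs-filt-cd}, we may take $s_{\mathcal{K}}$ to be a bound depending only on $k$, on the $s_{S_j, K^{(j-1)}}$'s, and on the filtration shifts from Proposition~\ref{prop:filtered-cone}.

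The core work is the filtered Seidel triangle. My approach is to revisit Seidel's construction in the form used in~\cite{Bi-Co:lefcob-pub}, where the triangle~\eqref{eq:ex-tr} arises from a Lagrangian cobordism $V \subset E' \times \mathbb{C}$ with ends $(Q, S, \tau_S(Q))$ associated to the Dehn twist. The maps $\varphi_{S,Q}$ and $\psi_{S,Q}$ together with their homotopies are produced by counting pseudoholomorphic polygons with boundary on $V$ and its horizontal translates. Because $V$ is compact and $\lambda_{E'}$-exact with a fixed primitive, each such polygon admits an action–energy inequality of the form $\mathcal{A}(\mathrm{output}) \leq \mathcal{A}(\mathrm{input}) + C_V$, where $C_V$ is controlled by the oscillation of the primitive of $V$ and of the (compactly supported) Hamiltonian perturbations used to define the cobordism-level Fukaya-type operations. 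The crucial observation is that $C_V$ is independent of the test-Lagrangian $\vbl$ used to evaluate the Yoneda modules; it depends only on $S$, on the marking of $Q$, and on the auxiliary perturbation data. This gives the uniform-in-$\vbl$ filtration shift required for a weakly filtered module homomorphism.

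To assemble the iterated cone I would proceed by induction on $j$, carrying as inductive hypothesis the existence of filtered quasi-isomorphisms between $\mathcal{K}$ and the partial cone obtained after the first $j$ substitutions, with an explicit bound on the shift. At each inductive step I substitute the filtered triangle of the $(j{+}1)$-st Dehn twist $\tau_{S_{j+1}}$ into the appropriate slot, using the filtered cone-composition lemma (part of Proposition~\ref{prop:filtered-cone} and the material in \S\ref{s:fhomalg}) that says: gluing a filtered quasi-isomorphism into one vertex of a filtered iterated cone produces a new filtered quasi-isomorphism whose shift is bounded by the sum of the shifts plus a constant depending only on the combinatorics of the cone. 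Since both the number of Dehn twists ($k$) and the combinatorics of~\eqref{eq:Bjd} are fixed, the total shift $s_{\mathcal{K}}$ is finite. The homotopies $h'$ and $h''$ are assembled in the same manner from the homotopies of the single-twist triangle, and the relation $\varphi\circ\psi=\id+\mu_1^{\tmod}(h')$, $\psi\circ\varphi=\id+\mu_1^{\tmod}(h'')$ follows from the corresponding single-twist relations by standard $A_\infty$-algebra of cones, all performed in the weakly filtered category.

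The main obstacle I anticipate is the filtered behavior of the single-twist triangle, specifically verifying uniformity of the action shift in $\vbl$ and compatibility of markings under the Dehn twist isotopy. The markings on $K^{(j)} = \tau_{S_j}(K^{(j-1)})$ must be chosen so that the action functional transforms predictably; this is handled by the convention of~\S\ref{sb:prfs-filt-cd}. Once the markings are fixed, the action shift $C_V$ in the cobordism argument becomes a geometric constant of the Dehn-twist cobordism together with the perturbation data, and the rest is a formal filtered cone-manipulation argument.
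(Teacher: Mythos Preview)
Your proposal is correct and follows the same overall architecture as the paper: establish a filtered version of the Seidel exact triangle for a single Dehn twist via a Lagrangian cobordism, then iterate using the filtered cone manipulations of~\S\ref{sb:mcones}--\S\ref{sb:icones} (in particular Proposition~\ref{p:equiv-cones}). The inductive assembly you describe is exactly what the paper does in the last paragraph of~\S\ref{sb:prfs-filt-cd}.

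The one substantive difference is the cobordism you use. You invoke the cobordism $V \subset \mathbb{R}^2 \times E'$ with ends $(Q, S, \tau_S(Q))$ from~\cite{Bi-Co:lefcob-pub}, which depends on $Q$. The paper instead uses the Mak--Wu cobordism $W \subset \mathbb{R}^2 \times X \times X^{-}$ (ends $S\times S$, $\Gamma_{\tau^{-1}}$, $\Gamma_{\id}$), which is independent of $Q$; the Lagrangian $Q$ enters only through the product functor $\mathcal{I}_{\gamma,Q}$. The payoff of the paper's choice is that the action-shift bound~\eqref{eq:s-vphi} and the weight $S(W)+C(h_\gamma,h_{\gamma'})$ are visibly constants of $W$, $\gamma$, $\gamma'$ alone, making the independence from both $\vbl$ and $Q$ automatic. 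In your approach the shadow and primitive of $V$ depend on $Q$, so at each step $j$ the shift depends on $K^{(j-1)}$; this is still acceptable since $s_{\mathcal{K}}$ is allowed to depend on $K$, but the uniformity-in-$\vbl$ requires a separate (though standard) argument. The paper explicitly remarks that your route via~\cite{Bi-Co:lefcob-pub} would also work, and chose Mak--Wu only as ``the fastest approach in our context.'' One minor correction: your cobordism $V$ is not compact (it has cylindrical ends); what you need is that it has finite shadow and bounded primitive outside a compact set, which is what actually controls the action shift.
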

}

The proof of this statement is again postponed to
~\S\ref{sb:prfs-filt-cd}.
The constant $s_{\mathcal{K}}$ depends on $K$
(and its marking) as well as on the marking on the spheres
$S_1, \ldots, S_k$. 

In particular, the above implies that for every exact Lagrangian
$\vbl$ we have chain maps
\begin{equation} \label{eq:vphi-psi-v}
  \begin{aligned}
    & \varphi_{\vbl}: CF(\vbl,K) \longrightarrow [\mathcal{B}_1(\vbl)
    \longrightarrow \cdots \longrightarrow \mathcal{B}_k(\vbl)
    \longrightarrow CF(\vbl,
    K^{(k)})], \\
    & \psi_{\vbl}: [\mathcal{B}_1(\vbl) \longrightarrow \cdots
    \longrightarrow \mathcal{B}_k(\vbl) \longrightarrow CF(\vbl,
    K^{(k)})] \longrightarrow CF(\vbl,K),
  \end{aligned}
\end{equation}
which are $s_{\mathcal{K}}$-filtered and such that
$\varphi_{\vbl} \circ \psi_{\vbl}$ and
$\psi_{\vbl} \circ \varphi_{\vbl}$ are chain homotopic to the
identities via chain homotopies that shift filtrations by
$\leq s_{\mathcal{K}}$. Once again, it is important to stress that the
bound on the action shift $s_{\mathcal{K}}$ is independent of $\vbl$.

Phrased in the terminology of Definition~\ref{d:dit-mod}, the above
says that the module $\mathcal{K}$ (resp. filtered chain complex
$CF(\vbl,K)$) and the module on the right-hand side
of~\eqref{eq:L-cone} (resp. the filtered chain complex
$[\mathcal{B}_1(\vbl) \longrightarrow \cdots \longrightarrow
\mathcal{B}_k(\vbl) \longrightarrow CF(\vbl, K^{(k)})]$) are at
distance $\leq s_{\mathcal{K}}$ one from the other.

Finally, recall that the pullback module
$\text{Inc}_{\mathcal{R}, E'}^* \mathcal{K}^{(k)}$ is acyclic. We
claim that this acyclicity holds also in the filtered sense. Namely,
there exists a constant $s_{C} = s_{C}(K)$, which depends on $K$, and
a weakly filtered pre-module homomorphism
$h: \text{Inc}_{\mathcal{R}, E'}^* \mathcal{K}^{(k)} \longrightarrow
\text{Inc}_{\mathcal{R}, E'}^* \mathcal{K}^{(k)}$ that shifts action
by $\leq s_{C}$ such that in
$\hom_{\text{mod}_{\fuk(E; \mathcal{R})}}(\text{Inc}_{\mathcal{R},
  E'}^* \mathcal{K}^{(k)}, \text{Inc}_{\mathcal{R}, E'}^*
\mathcal{K}^{(k)})$ we have $id = \mu_1^{\text{mod}}(h)$. In
particular, for every exact Lagrangian
$\vbl \subset \pi^{-1}(\mathcal{R})$ we have:
\begin{equation} \label{eq:beta-sc} \beta(CF(\vbl,\mathcal{K}^{(k)}))
  \leq s_C.
\end{equation}
Here, $\beta(CF(\vbl,\mathcal{K}^{(k)}))$ is the boundary depth of the
acyclic filtered chain complex $CF(\vbl,\mathcal{K}^{(k)})$.

The inequality~\eqref{eq:beta-sc} follows from the last paragraph of~\S\ref{sb:ex-t-Dehn} on
page~\pageref{pp:acyclic}. Indeed, by standard Floer theory we can
take $s_C = 2 \rho(\id, \phi)$, where $\phi: E' \longrightarrow E'$ is
a Hamiltonian diffeomorphism that sends $K^{(k)}$ to
$E'|_{\{\text{Im}\, z \leq -\nu\}}$, and $\rho$ stands for the Hofer
metric on the group of Hamiltonian diffeomorphisms.

\begin{rem} \label{r:sC-constant} The constant $s_{C}$ appearing
  in~\eqref{eq:beta-sc} depends apriori on $K$ (though not on $L$). A
  more careful argument, based on~\cite[\S4.4]{Bi-Co:lefcob-pub},
  shows that the Hamiltonian diffeomorphisms $\phi$, mentioned above,
  can be taken to be at a uniformly bounded (in $K$) Hofer-distance
  from $\id$, as long as we restrict to Lagrangians
  $K \subset E|_{\{ |\text{Im}| \, z < \epsilon\}}$. Consequently the
  constant $s_C$ can be assumed to be independent of $K$.

  However, this additional information will not be used in the rest of
  the paper. The reason is that we will use the filtered cone
  decomposition~\eqref{eq:L-cone} only for one Lagrangian $K$, namely
  $K=N$ - the zero-section of $T^*(N)$ viewed as a Lagrangian in $E$.
\end{rem}

\subsection{Proof of the statements from~\S\ref{sb:filt-cd-Lef}}
\label{sb:prfs-filt-cd}
We continue to assume here all exact Lagrangian submanifolds (and
cobordisms) to be marked.

We begin with a brief digression on inclusion and product functors.
Let $(Y, d\lambda_{Y})$ be a Liouville manifold as
in~\S\ref{sbsb:HF}. Let
$\gamma : \mathbb{R} \longrightarrow \mathbb{R}^2$ be a smooth proper
embedding sending the ends of $\mathbb{R}$ to horizontal rays in
$\mathbb{R}^2$. By abuse of notation we denote by $\gamma$ also the
image of this embedding. By the results
of~\cite{Bi-Co:lcob-fuk,Bi-Co-Sh:lshadows-long} there is a weakly
filtered $A_{\infty}$-functor (called in~\cite{Bi-Co:lcob-fuk}
``inclusion functor'')
$\mathcal{I}_{\gamma}: \fuk(Y) \longrightarrow
\fuk_{\text{cob}}(\mathbb{R}^2 \times Y)$ which sends the object
$\vbl \subset Y$ to
$\mathcal{I}_{\gamma}(\vbl) = \gamma \times \vbl \subset \mathbb{R}^2
\times Y$. Here $\fuk(Y)$ stands for the Fukaya category of closed
$\lambda_Y$-exact Lagrangians in $Y$ and
$\fuk_{\text{cob}}(\mathbb{R}^2 \times Y)$ for the Fukaya category of
exact cobordisms in $\mathbb{R}^2 \times Y$, with respect to the
$1$-form $xdy \oplus \lambda_Y$.

Let $(X, \omega = d\lambda)$ be a Liouville manifold as
in~\S\ref{sbsb:HF}. We denote by $X^{-}$ the manifold $X$ endowed with
the symplectic structure $-\omega$. Take $Y = X \times X^{-}$, endowed
with the symplectic structure $\omega \oplus -\omega$ and Liouville
form $\widetilde{\lambda} := \lambda \oplus -\lambda$ (playing the
role of $\lambda_Y$). Fix
$\widetilde{\lambda}' := xdy \oplus \lambda \oplus -\lambda$ as the
primitive of $\omega_{\mathbb{R}^2} \oplus \omega \oplus -\omega$.

Fix an exact Lagrangians $Q \subset X$. A slight variation on the
inclusion functor $\mathcal{I}_{\gamma}$ is the $A_{\infty}$-functor
$\mathcal{I}_{\gamma, Q}: \fuk(X) \longrightarrow
\fuk_{\text{cob}}(\mathbb{R}^2 \times X \times X^{-})$ which sends an
exact Lagrangians $\vbl \subset X$ to
$\mathcal{I}_{\gamma, Q}(\vbl) := \gamma \times \vbl \times Q$. The
construction of this functor is very similar to the construction of
$\mathcal{I}_{\gamma}$ (for the case $Y = X \times X^{-}$), as
detailed in~\cite{Bi-Co:lcob-fuk}. In fact, $\mathcal{I}_{\gamma, Q}$
factors as
$\mathcal{I}_{\gamma, Q} := \mathcal{I}_{\gamma} \circ \mathcal{P}_Q$,
where $\mathcal{P}_Q: \fuk(X) \longrightarrow \fuk(X \times X^{-})$ is
the obvious functor that sends $\vbl \subset X$ to
$\vbl \times Q \subset X \times X^{-}$.

The main ingredient to show Propositions \ref{prop:filtered-cone} and \ref{prop:quasi-shift} is to
 establish a filtered version of the Seidel's Dehn-twist
triangle~\eqref{eq:ex-tr} (or more precisely~\eqref{eq:Q-mc}).  We pursue this now.

\ocnote{
\begin{lem}\label{lem:Dehn-filtered}
The mapping cone in equation (\ref{eq:Q-mc}) admits a filtered version.
\end{lem}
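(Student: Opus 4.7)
The plan is to realize the Seidel exact triangle geometrically via an exact Lagrangian cobordism and then invoke the weakly filtered cone decomposition theorem from \cite{Bi-Co-Sh:lshadows-long}. First, I would construct (or cite) an exact Lagrangian cobordism $V \subset \R^2 \times X$ with three cylindrical ends modeled on $Q$, $S$ and $\tau(Q)$, placed at prescribed horizontal heights in $\R^2$. Such a cobordism arises as the trace of the Lagrangian surgery associated to the Dehn twist around $S$, and is $\widetilde{\lambda}'$-exact for $\widetilde{\lambda}' := xdy \oplus \lambda$. Fix a primitive $h_V : V \to \R$ for $\widetilde{\lambda}'|_V$. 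Its restrictions to the three cylindrical ends induce markings on $Q$, $S$ and $\tau(Q)$; by adjusting by constants (which is absorbed into the choice of markings and affects spectral data only by a shift, cf.~Remark~\ref{r:spec-choices}(\ref{i:spec-lambda})) we arrange these to coincide with the a priori fixed markings of $Q$ and $S$ and to determine the marking on $\tau(Q)$.

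Second, I would apply the weakly filtered inclusion functor $\mathcal{I}_\gamma : \fuk(X) \to \fuk_{\text{cob}}(\R^2 \times X)$ (together with the product trick $\mathcal{I}_{\gamma,Q} = \mathcal{I}_\gamma \circ \mathcal{P}_Q$ when convenient). The Yoneda module $\mathcal{V}$ of $V$ over $\fuk_{\text{cob}}(\R^2 \times X)$, pulled back through $\mathcal{I}_\gamma$, admits an iterated cone decomposition whose terms are the Yoneda modules of the ends of $V$ tensored with the appropriate Floer complexes. In the unfiltered setting this is Seidel's triangle (\ref{eq:Q-mc}); the refinement we need is the weakly filtered cone decomposition theorem of \cite{Bi-Co-Sh:lshadows-long}, applied to the cobordism $V$. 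This directly produces weakly filtered module homomorphisms
\[
\varphi : \mathcal{Q} \longrightarrow [\mathcal{S} \otimes CF(S,Q) \longrightarrow \tau(\mathcal{Q})], \qquad \psi : [\mathcal{S} \otimes CF(S,Q) \longrightarrow \tau(\mathcal{Q})] \longrightarrow \mathcal{Q},
\]
together with pre-module homotopies $h', h''$ satisfying $\varphi \circ \psi = \id + \mu_1^{\text{mod}}(h'')$ and $\psi \circ \varphi = \id + \mu_1^{\text{mod}}(h')$, all with filtration shifts bounded by a single constant $s = s(V, h_V, \gamma)$ which depends only on the oscillation of $h_V$ on a compact core of $V$, the horizontal extent of the non-cylindrical part of $V$ in $\R^2$, and the path $\gamma$. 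An entirely analogous argument, applied to the obvious ``rotated'' cobordism, yields the filtered version of (\ref{eq:Q-mc-2}).

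The main obstacle will be the careful control of the primitive function $h_V$ on $V$: $V$ is only canonical up to Hamiltonian isotopy, and a poorly chosen model could cause $\max_V h_V - \min_V h_V$ to blow up. To handle this, I would fix once and for all a geometric model of the Dehn-twist surgery cobordism (e.g.~the standard Weinstein handle attached along $S$ with prescribed handle size), and estimate the oscillation of the resulting primitive directly in terms of the markings $h_Q$, $h_S$ and the fixed handle data. With this bound in hand, the weakly filtered cobordism formalism of \cite{Bi-Co-Sh:lshadows-long} produces the quasi-isomorphism of (\ref{eq:Q-mc}) in the weakly filtered category, completing the proof. This filtered Dehn-twist triangle is the local building block which will later be iterated, via the scheme of \S\ref{sb:ex-t-Dehn}, to deduce Propositions~\ref{prop:filtered-cone} and \ref{prop:quasi-shift}.
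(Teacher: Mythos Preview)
Your approach is correct and is in fact explicitly acknowledged by the paper (in the remark immediately following the proof) as one of the valid routes to a filtered Dehn-twist triangle. The paper, however, takes a different path: instead of the surgery cobordism $V \subset \R^2 \times X$ with ends $Q$, $S$, $\tau(Q)$, it uses the Mak--Wu cobordism $W \subset \R^2 \times X \times X^{-}$ whose ends are the \emph{correspondences} $S \times S$, $\Gamma_{\tau^{-1}}$, $\Gamma_{\id}$, and then pulls back via the functor $\mathcal{I}_{\gamma,Q}(L) = \gamma \times L \times Q$. The payoff is that $W$ is fixed once and for all, independent of $Q$; the paper then computes the action at each intersection point of $(\gamma \times L \times Q) \cap W$ explicitly (equations~\eqref{eq:A-P'}--\eqref{eq:A-R}) and reads off that both the shift $s_\varphi$ of the cone map and the weight of the quasi-isomorphism are bounded by constants involving only $W$ and the curves $\gamma,\gamma'$, hence independent of both $L$ \emph{and} $Q$. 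Your cobordism $V$ depends on $Q$, so you are forced into the extra step you identify --- controlling the oscillation of $h_V$ in terms of $h_Q$, $h_S$ and fixed handle data --- to get bounds that behave well under the iteration of~\S\ref{sb:ex-t-Dehn} (where $Q$ runs over $K^{(0)},\ldots,K^{(k-1)}$). This is doable, and since only finitely many $Q$'s occur for a fixed $K$, it suffices for the application; but the Mak--Wu route sidesteps it entirely and yields the $Q$-uniformity for free, which is why the paper calls it ``the fastest approach in our context.''
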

}
In the course of the proof we will indicate more precisely the relevant shifts involved and their 
dependence on the choices involved in the construction.

\begin{proof}

Let
$(X^{2n}, \omega=d\lambda)$ be a Liouville manifold as
in~\S\ref{sbsb:HF} and $S \subset X$,
$\tau = \tau_S: X \longrightarrow X$ be as at the beginning
of~\S\ref{sb:ex-t-Dehn}. It is possible to choose $\tau$ (a
representative of the Dehn-twist symplectic mapping class) such that
$\tau$ is supported near $S$ and moreover such that
$\tau^*\lambda = \lambda + d h_{\tau}$, where
$h_{\tau}: X \longrightarrow \mathbb{R}$ is a smooth function
compactly supported near $S$. (The latter easily follows from the fact
that given any neighborhood of the zero-section in $T^*(S^n)$, there
is a model Dehn-twist $T^*(S^n) \longrightarrow T^*(S^n)$ supported in
that neighborhood which is $\lambda_{\text{can}}$-exact, and the fact
that the sphere $S$ is $\lambda$-exact.) Note that we have:
$(\tau^{-1})^* \lambda = \lambda - d(h_{\tau} \circ \tau^{-1})$.

\label{pp:mark-tau}
Let $Q \subset X$ be a marked exact Lagrangian with primitive
$h_Q: Q \longrightarrow \mathbb{R}$ for $\lambda|_{Q}$. Then $\tau(Q)$
is also a marked exact Lagrangian. Indeed,
$h_{\tau(Q)}: \tau(Q) \longrightarrow \mathbb{R}$ defined by
$$h_{\tau(Q)}(x) := h_Q(\tau^{-1}(x)) + h_{\tau}(\tau^{-1}(x))$$ is a
primitive of $\lambda|_{\tau(Q)}$. We will use this function to mark
$\tau(Q)$.

We now get back to Dehn-twists, from the perspective of Lagrangian
cobordism. By a result of Mak-Wu~\cite{Mak-Wu:Dehn-twist} there exists
an exact Lagrangian cobordism
$W \subset \mathbb{R}^2 \times X \times X^{-}$ with two negative ends
and one positive end, as follows. The upper negative end is
$S \times S$ and the lower negative end is the graph
$\Gamma_{\tau^{-1}}$ of $\tau^{-1}$. The positive end is the graph of
the identity map (i.e.~the diagonal in $X \times X^{-}$). See
Figure~\ref{f:MW-cob}.

\begin{figure}[htbp]
  \begin{center}
    \includegraphics[scale=0.68]{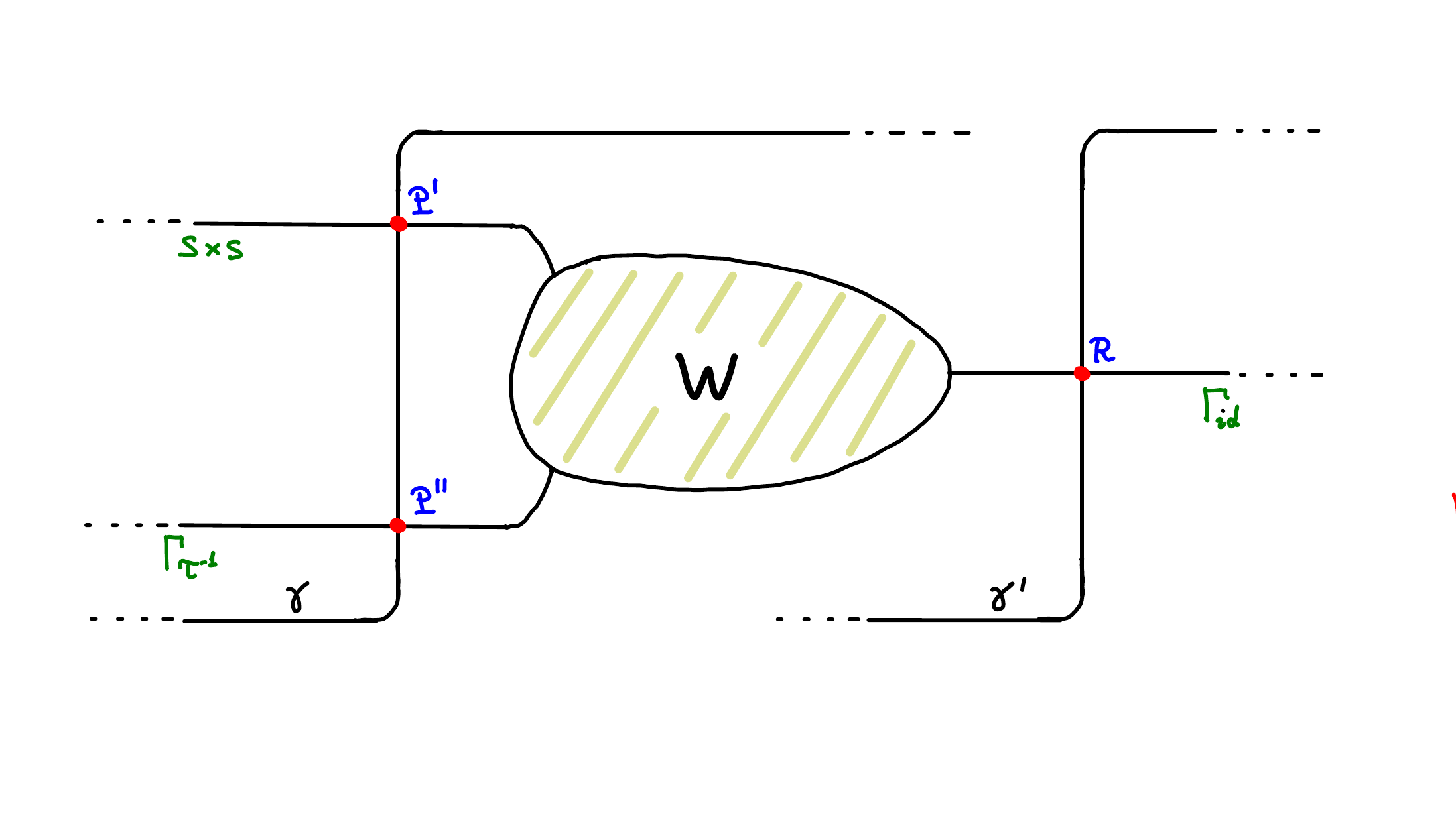}
  \end{center}
  \caption{Projection to $\mathbb{R}^2$ of the Mak-Wu cobordism
    $W \subset \mathbb{R}^2 \times X \times X^{-}$, and the curves
    $\gamma$, $\gamma'$.}
  \label{f:MW-cob}
\end{figure}

Let $\gamma \subset \mathbb{R}^2$ be the curve depicted in
Figure~\ref{f:MW-cob}, and denote by $\mathcal{W}$ the Yoneda module
corresponding to
$W \in \textnormal{Ob}(\fuk_{\text{cob}}(\mathbb{R}^2 \times X \times
X^{-})$. Denote also by $\mathcal{S} \times \mathcal{S}$,
$\tau(\mathcal{Q})$ the Yoneda modules (over $\fuk(X \times X^{-})$
corresponding to the Lagrangians $S \times S$ and $\tau(\mathcal{K})$,
respectively. Ignoring filtrations for the moment, a straightforward
calculation (based on the theory from~\cite{Bi-Co:lcob-fuk}) shows
that the pullback module $\mathcal{I}_{\gamma, Q}^*\mathcal{W}$
coincides with a mapping cone
\begin{equation} \label{eq:IW} \mathcal{I}_{\gamma, Q}^*\mathcal{W} =
  [\mathcal{S} \otimes CF(S, Q) \xrightarrow{\; \varphi \;}
  \tau(\mathcal{Q})]
\end{equation}
for some module homomorphism
$\varphi: \mathcal{S} \otimes CF(S, Q) \longrightarrow
\tau(\mathcal{Q})$.

Consider now the curve $\gamma' \subset \mathbb{R}^2$ from
Figure~\ref{f:MW-cob}. Ignoring filtrations again, it is easy to see
that $\mathcal{I}_{\gamma', Q}^*\mathcal{W} = \mathcal{Q}$, the Yoneda
module corresponding to $Q \subset X$.

The curves $\gamma$ and $\gamma'$ are isotopic via a Hamiltonian
isotopy which is horizontal at infinity. Therefore the modules
$\mathcal{I}_{\gamma, Q}^*\mathcal{W}$ and
$\mathcal{I}_{\gamma', Q}^*\mathcal{W}$ are quasi-isomorphic (in the
category $\text{mod}_{\fuk(X)}$). Thus we have a quasi-isomorphism
\begin{equation} \label{eq:Q-mc-3} \mathcal{Q} \cong [\mathcal{S}
  \otimes CF(S, Q) \xrightarrow{\; \varphi \;} \tau(\mathcal{Q})].
\end{equation}
Our goal now is to derive a coarse filtered version
of~\eqref{eq:Q-mc-3}. More specifically, we will have to address two
thing: explain why the module homomorphism $\varphi$ is filtered, and
then show that the quasi-isomorphism in~\eqref{eq:Q-mc-3} is weighted
in the sense of Definition~\ref{d:dit-mod}.

Note that $\widetilde{\lambda}'$ coincides with $\widetilde{\lambda}$
along each horizontal end of $W$ (because $xdy$ vanishes along
horizontal rays). We also have
$$\widetilde{\lambda}|_{\Gamma_{\id}} = 0, \quad
\widetilde{\lambda}|_{S \times S} = \lambda_S \oplus -\lambda_S, \quad
\widetilde{\lambda}|_{\Gamma_{\tau^{-1}}} = d(h_{\tau} \circ
\tau^{-1}),$$ where $\lambda_S := \lambda|_S$.  Let
$h_W: W \longrightarrow \mathbb{R}$ be a primitive of
$\widetilde{\lambda}'|_W$. By the above, $h_W$ restricts along each of
the ends of $W$ to a primitive function for the restriction of
$\widetilde{\lambda}$ to the Lagrangian corresponding to that end. We
will use these functions, denoted by $h_{W, \Gamma_{\id}}$,
$h_{W, S \times S}$ and $h_{W, \Gamma_{\tau^{-1}}}$, for primitives of
$\widetilde{\lambda}|_{\Gamma_{\id}}$,
$\widetilde{\lambda}|_{S \times S}$ and
$\widetilde{\lambda}|_{\Gamma_{\tau^{-1}}}$ respectively.  Note that
$h_{W, \Gamma_{\id}}$ is constant, and by subtracting this constant
from $h_W$ we may assume without loss of generality that
$h_{W, \Gamma_{\id}} \equiv 0$. (Note that the exact Lagrangian
cobordism $W$ does not come with a preferred marking, and we are free
to choose $h_W$ as we wish.)

Pick any marking on $S$, i.e.~a primitive function
$h_S: S \longrightarrow \mathbb{R}$ for $\lambda_S$. We have:
\begin{equation} \label{eq:h_W-SxS}
  \begin{aligned}
    & h_{W, S \times S} (x,y) = h_S(x) - h_S(y) + C_{W, S \times S},
    \;
    \forall \, (x, y) \in S \times S, \\
    & h_{W, \Gamma_{\tau^{-1}}}(x, \tau^{-1}(x)) =
    h_{\tau}(\tau^{-1}(x)) + C_{W, \Gamma_{\tau^{-1}}}, \; \forall x
    \in X,
  \end{aligned}
\end{equation}
for some constants $C_{W, S \times S}$, $C_{W, \Gamma_{\tau^{-1}}}$.
Fix a primitive $h_{\gamma}: \gamma \longrightarrow \mathbb{R}$ of
$(xdy)|_{\gamma}$. Note that $h_{\gamma}$ is constant along the
positive and negative ends of $\gamma$. Given any marked exact
Lagrangian $\vbl \subset X$, with a primitive function
$h_{\vbl}: \vbl \longrightarrow \mathbb{R}$ for $\lambda|_{\vbl}$, we
will use the function
$h_{\gamma \times \vbl \times Q} := h_\gamma + h_{\vbl} - h_Q$ as a
primitive for $\widetilde{\lambda}'|_{\gamma \times \vbl \times Q}$.

Consider the Floer complex $CF(\gamma \times \vbl \times Q, W)$ with
Floer data consisting of a zero Hamiltonian and any regular almost
complex structure. (We assume here without loss of generality that
$(\vbl \times Q) \pitchfork S \times S$ and
$\vbl \times Q \pitchfork \Gamma_{\tau^{-1}}$.)

Given two exact Lagrangians $L'$, $L''$ in a Liouville manifold
$(Y, d\lambda_Y)$, endowed with primitives
$h_{L'}: L' \longrightarrow \mathbb{R}$,
$h_{L''}: L'' \longrightarrow \mathbb{R}$ for $\lambda_{Y}|_{L'}$ and
$\lambda_Y|_{L''}$, and given a Floer datum for $(L',L'')$ we denote
by $\mathcal{A}(-; (L', L''))$ the action functional associated to the
given Floer datum and the choices of primitives $h_{L'}$, $h_{L''}$.
Here $-$ stands for a path connecting a point from $L'$ to a point in
$L''$.

We will now examine the action functional $\mathcal{A}$ for the pairs
$(\gamma \times \vbl \times Q, W)$, $(\vbl,S)$ and $(S,Q)$. As before,
we use here Floer data with zero Hamiltonian terms. We begin with
calculating $\mathcal{A}$ on the intersection points of
$(\gamma \times \vbl \times Q) \cap W$ (viewed as constant
paths). These intersection points fall into two types:
\begin{enumerate}
\item $(P', x_1, x_2)$, where $P' \in \mathbb{R}^2$ is as depicted in
  Figure~\ref{f:MW-cob} and $x_1, x_2 \in S$.
\item $(P'', x_1, x_2)$, where $P'' \in \mathbb{R}^2$ is as in
  Figure~\ref{f:MW-cob} and $x_1 \in \vbl \cap \tau(Q)$,
  $x_2 = \tau^{-1}(x_1)$.
\end{enumerate}

For the points of the 1'st type we have:
\begin{equation} \label{eq:A-P'}
  \begin{aligned}
    \mathcal{A}(P', x_1, x_2; & (\gamma \times \vbl \times Q, W)) \\
    & = h_S(x_1) - h_S(x_2) + C_{W, S \times S} - h_{\gamma}(p') -
    (h_{\vbl}(x_1)-h_Q(x_2)) \\
    & = (h_S(x_1) - h_{\vbl}(x_1)) + (h_Q(x_2) - h_S(x_2)) + (C_{W, S
      \times S} - h_{\gamma}(P')) \\
    & = \mathcal{A}(x_1; (\vbl,S)) + \mathcal{A}(x_2; (S,Q)) + (C_{W,
      S \times S} - h_{\gamma}(P')).
  \end{aligned}
\end{equation}
Note that the sum of the first two terms in the last equality is
precisely the action-level of the generator
$x_1 \otimes x_2 \in CF(\vbl,S) \otimes CF(S, Q)$.

Turning to the intersection points of the 2'nd type, we have:
\begin{equation} \label{eq:A-P''}
  \begin{aligned}
    \mathcal{A}(P'', x_1, x_2; & (\gamma \times \vbl \times Q) \\
    & = h_{\tau}(\tau^{-1}(x_1)) + C_{W, \Gamma_{\tau^{-1}}} -
    h_{\gamma}(P'') - h_{\vbl}(x_1) + h_Q(\tau^{-1}(x_1)) \\
    & = h_{\tau(Q)}(x_1) - h_{\vbl}(x_1) + C_{W, \Gamma_{\tau^{-1}}} -
    h_{\gamma}(P'') \\
    & = \mathcal{A}(x_1; (\vbl, \tau(Q))) + (C_{W, \Gamma_{\tau^{-1}}}
    - h_{\gamma}(P'')).
  \end{aligned}
\end{equation}

Now recall from~\eqref{eq:IW} that
$$CF(\gamma \times \vbl \times Q, W) = [CF(\vbl,S) \otimes CF(S, Q)
\xrightarrow{\; \varphi \;} \tau(Q)],$$ and that by the results
of~\cite{Bi-Co:lcob-fuk} counts Floer strips going from the
intersection points of type~1 to points of type~2.

From the standard action-energy identity we obtain the following: if
the generator $x \in \vbl \cap \tau(Q)$ of $CF(\vbl, \tau(Q))$
participates in $\varphi(x_1 \otimes x_2)$, then:

\begin{equation} \label{eq:A-vSQ-ineq} \mathcal{A}(x_1; (\vbl,S)) +
  \mathcal{A} (x_2; (S, Q)) + C_{W, S \times S} - h_\gamma(P') \geq
  \mathcal{A}(x; (\vbl, \tau(Q))) + C_{W, \Gamma_{\tau^{-1}}} -
  h_{\gamma}(P'').
\end{equation}


It follows that $\varphi$ shifts action by
\begin{equation} \label{eq:s-vphi} s_{\varphi} \leq h_{\gamma}(P'') -
  h_{\gamma}(P') + C_{W, S \times S} - C_{W, \Gamma_{\tau^{-1}}}.
\end{equation}
The latter quantity is a constant which is independent of $Q$ and
$\vbl$.

Next, consider the curve $\gamma'$ from Figure~\ref{f:MW-cob} and
$\mathcal{I}_{\gamma', Q}: \fuk(X) \longrightarrow
\fuk_{\text{cob}}(\mathbb{R}^2 \times X \times X^{-})$. Recall that up
to a filtration shift we have
$\mathcal{I}_{\gamma', Q}^* \mathcal{W} = \mathcal{Q}$, therefore
$CF(\vbl \times Q, \Gamma_{\id}) \cong CF(\vbl,Q)$, again up to a
filtration shift. We will now determine this shift. To this end,
recall first that $h_{W, \Gamma_{\id}} \equiv 0$. The intersection
points of $(\gamma' \times \vbl \times Q) \cap W$ are of the type
$(R, x,x)$, $x \in \vbl \cap Q$. Calculating the action on such points
we get:
\begin{equation} \label{eq:A-R} \mathcal{A}(R,x,x; (\gamma' \times
  \vbl \times Q, W)) = -h_{\gamma'}(R) - h_{\vbl}(x) + h_Q(x) =
  \mathcal{A}(x; (\vbl,Q)) - h_{\gamma'}(R).
\end{equation}
Therefore the identification
$\mathcal{I}_{\gamma', Q}^* \mathcal{W} = \mathcal{Q}$ holds up to an
action shift of the constant $h_{\gamma'}(R)$.

Finally, there exists a constant $S(W) \geq 0$ that depends only on
$W$ and another constant $C(h_{\gamma}, h_{\gamma'}) \geq 0$ that
depends only on the choices of the primitives $h_{\gamma}$,
$h_{\gamma'}$ such that the following holds. There exist weakly
filtered module homomorphisms
$\phi: \mathcal{I}_{\gamma, Q}^*\mathcal{W} \longrightarrow
\mathcal{I}_{\gamma', Q}^*\mathcal{W}$ and
$\phi': \mathcal{I}_{\gamma', Q}^*\mathcal{W} \longrightarrow
\mathcal{I}_{\gamma, Q}^*\mathcal{W}$ that shift filtrations by
$\leq S(W) + C(h_{\gamma'})$ such that
$\phi' \circ \phi = \id + \mu_1^{\text{mod}}(H)$,
$\phi \circ \phi' = \id + \mu_1^{\text{mod}}(H')$ for some weakly
filtered pre-module homomorphisms $H$, $H'$ that shift filtrations by
$\leq 2(S(W) + C(h_{\gamma'}))$. We refer the reader
to~\cite[\S4]{Bi-Co-Sh:lshadows-long} for more details on this. The
constant $S(W)$ is the {\em shadow} of the cobordism $W$ - namely the
area of the domain in $\mathbb{R}^2$ consisting of the projection of
$W$ to $\mathbb{R}^2$ together with all the bounded connected
components of the complement of this projection.

As a result, we obtain a weakly filtered quasi-isomorphism
\begin{equation} \label{eq:Q-cone} \mathcal{Q} \cong [\mathcal{S}
  \otimes CF(S,Q) \xrightarrow{\; (\varphi, s_{\varphi}) \;}
  \tau(\mathcal{Q})],
\end{equation}
of weight bounded from above by a constant that depends only on $W$
and $\gamma$, $\gamma'$. (See Definition~\ref{d:dit-mod}.)  As seen
above at~\eqref{eq:s-vphi} the amount of shift of $\varphi$ is bounded
from above by a constant $s_{\varphi}$ which does not depend on $Q$.
This concludes the construction of the filtered version of the Seidel
exact triangle. 
\end{proof}

\ocnote{
\begin{rem} There are a number of other ways to construct Seidel's exact triangle associated
to a Dehn twist. Certainly, Seidel's  original construction in \cite{Se:long-exact} and also
 the method in \cite{Bi-Co:lefcob-pub}. These methods  can also be used to deduce filtered 
 versions of the exact traingle. We used here the method
in \cite{Mak-Wu:Dehn-twist} as it appears to provide the fastest approach in our context.
\end{rem}
}

Propositions \ref{prop:filtered-cone} and \ref{prop:quasi-shift} now follow by applying the
procedure indicated at the end of~\S\ref{sb:ex-t-Dehn}, but now using
the filtered version of~\eqref{eq:Q-mc} and~\eqref{eq:Q-mc-2}, as in Lemma \ref{lem:Dehn-filtered}, in
conjunction with the algebraic remarks contained in Proposition~\ref{p:equiv-cones} and the statement
from the beginning of~\S\ref{sb:icones}.

\begin{remnonum}
  The weight of the quasi-isomorphism at~\eqref{eq:Q-cone} as well as
  $s_{\varphi}$ do depend (also) on $W$ (hence on the specific choice
  of the representative $\tau$ of the symplectic mapping class of the
  Dehn-twist), however these choices are made in advance, once and for
  all. The dependencies of this weight and of $s_{\varphi}$ on
  $\gamma$, $\gamma'$ and $h_{\gamma'}$, $h_{\gamma'}$ can in fact be
  eliminated by estimating more sharply the shifts in $\phi$, $\phi'$,
  $H$, $H'$ above. However this is not needed for our purposes.
\end{remnonum}


\section{Proof of the main theorem} \label{s:prf-main}

This section contains two parts. The first, and main part, provides the proof of Theorem \ref{t:main-a}.
The second is concerned with the converse of the statement, as indicated in Remark \ref{r:main-thm} (1).

\subsection{The spectral norm bound in equation (\ref{eq:bound-main-thm})}\label{subsec:proof-main}
For the proof of the main theorem we will need the following Lemma.
Fix a tubular neighborhood $\mathcal{V} = T^*_{\leq r_0}(N)$ of the
zero-section. For $q \in N$ denote by
$F_q = T^*_q(N) \cap \mathcal{V}$ the part of the cotangent fiber over
$q$ that lies inside $\mathcal{V}$. We endow the exact Lagrangians
$F_q$ with the $0$ function as a primitive of
$\lambda_{\text{can}}$. Note that for every marked exact Lagrangian
$L \subset \mathcal{V}$ and every $q \in N$ we have
$HF(L, F_q) \cong \mathbb{Z}_2$, hence
$\sigma_+(CF(L,F_q)) = \sigma_-(CF(L, F_q))$. We denote this number by
$\sigma(CF(L,F_q))$.

\begin{lem} \label{l:diff-fibers} There exist constants
  $C = C(\mathcal{V})>0$ and $C' = C'(\mathcal{V})>0$, that depend
  only on $\mathcal{V}$, such that for every marked exact Lagrangian
  $L \subset \textnormal{Int\,}(\mathcal{V})$ and every
  $q', q'' \in N$ we have
  $$|\sigma(CF(L,F_{q'})) - \sigma(CF(L,F_{q''}))| \leq C, \quad
  |\beta(CF(L,F_{q'})) - \beta(CF(L,F_{q''}))| \leq C'.$$
\end{lem}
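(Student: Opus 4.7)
The plan is to reduce both inequalities to the Lipschitz continuity of spectral invariants and boundary depth under Hamiltonian perturbations, applied to a uniformly bounded family of Hamiltonian diffeomorphisms sending one fiber to another. Concretely, I would first construct a family of Hamiltonians $H_{q',q''}: [0,1] \times T^*(N) \longrightarrow \mathbb{R}$, compactly supported in a fixed neighborhood of $\overline{\mathcal{V}}$, whose time-$1$ flow sends $F_{q'}$ to $F_{q''}$. Fixing a Riemannian metric on $N$, pick a smooth family of paths $\gamma_{q',q''}(t)$ joining $q'$ to $q''$ (e.g., geodesics), let $X_t$ be the associated time-dependent vector field on $N$, and set
\[
H_{q',q''}(t,q,p) \;=\; \chi(|p|)\, p\bigl(X_t(q)\bigr),
\]
where $\chi$ is a cutoff equal to $1$ on $\mathcal{V}$ and vanishing outside a slightly larger neighborhood. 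The fiber-preserving flow then carries $F_{q'}$ to $F_{q''}$. By compactness of $N \times N$ (and smoothness of the family), $\|H_{q',q''}\| \leq C_0$ for a constant $C_0 = C_0(\mathcal{V})$.

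Next, I would compare $CF(L,F_{q'})$ with $CF(L,F_{q''})$ via the naturality map of~\S\ref{sbsb:pss-nat}: the time-$1$ flow of $H_{q',q''}$ induces a chain isomorphism
\[
\mathcal{N}_{H_{q',q''}}:\; CF(L,F_{q'};\mathscr{D}) \;\xrightarrow{\;\cong\;}\; CF\bigl(L,F_{q''};(\phi^{H_{q',q''}})_*\mathscr{D}\bigr),
\]
which, combined with a continuation map back to zero Hamiltonian perturbation, provides an isomorphism between the persistence modules $HF^{\leq\bullet}(L,F_{q'})$ and $HF^{\leq\bullet}(L,F_{q''})$ whose action-filtration shift is bounded by $2\|H_{q',q''}\| + K$, where the additive constant $K$ accounts for the discrepancy between the primitive on $\phi^{H_{q',q''}}(F_{q'})$ induced by the isotopy and the prescribed primitive $0$ on $F_{q''}$. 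This latter constant is the Hamiltonian-action of the path $t\mapsto \phi^{H_{q',q''}}_t(F_{q'})$ and is itself controlled by $\|H_{q',q''}\|$ together with the geometry of $\mathcal{V}$, so can be absorbed into a uniform bound.

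The final step is to invoke the standard stability statements for filtered chain complexes recalled in~\S\ref{s:fhomalg}: if two filtered complexes are related by a pair of chain maps (inverse up to chain-homotopy) each of which shifts filtration by at most $s$, then the spectral invariants of corresponding classes differ by at most $s$, and the boundary depths differ by at most $2s$. Applied to the isomorphism constructed above, this yields $|\sigma(CF(L,F_{q'})) - \sigma(CF(L,F_{q''}))| \leq C$ and $|\beta(CF(L,F_{q'})) - \beta(CF(L,F_{q''}))| \leq C'$ for constants $C,C'$ depending only on $C_0(\mathcal{V})$, hence only on $\mathcal{V}$.

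The main obstacle lies in Step 1: ensuring that the Hamiltonians $H_{q',q''}$ can be chosen with uniformly bounded Hofer norm while remaining compatible with the Liouville/cotangent-bundle structure (so that the resulting Floer-theoretic comparison is clean). The key point making this tractable is the compactness of $N$, which bounds the length of the connecting paths $\gamma_{q',q''}$ and hence the vertical size of the needed lift, together with the fact that the cutoff $\chi$ can be fixed once and for all in terms of $\mathcal{V}$. The rest of the argument is a direct application of the general machinery of naturality, continuation and filtered stability.
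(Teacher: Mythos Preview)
Your approach is essentially the same as the paper's: construct a family of Hamiltonians of uniformly bounded Hofer norm carrying one fiber to another, then use naturality plus continuation together with the filtered stability estimates of \S\ref{s:fhomalg} (specifically Lemma~\ref{lem:quasieq-inv}). The paper works locally (fixing $q_0$ and varying $q$ in a small ball, then globalizing by compactness of $N$), whereas you attempt a direct global construction via geodesics; both strategies are fine.

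One technical point your sketch glosses over and the paper handles explicitly: the cotangent-lift flow generated by your $H_{q',q''}$ sends the \emph{full} fiber $T^*_{q'}(N)$ to $T^*_{q''}(N)$, but it does \emph{not} in general send the truncated fiber $F_{q'} = T^*_{q'}(N)\cap\mathcal{V}$ to $F_{q''}$, nor does it preserve $\mathcal{V}$, since the cotangent lift distorts the fiberwise norm unless the base flow is an isometry. The paper deals with this in two ways: (i) it requires the Hamiltonians to preserve $\mathcal{V}$ setwise and to carry truncated fiber to truncated fiber (asserting this is ``straightforward'' without giving an explicit formula), and (ii) it runs the Floer comparison in $T^*(N)$ between the full fiber $T^*_{q_0}(N)$ and its Hamiltonian image, and then invokes a maximum-principle / local-vs-global argument (as in Proposition~\ref{p:loc-glob}) to conclude that all relevant Floer trajectories lie in $\mathcal{V}$, so that the estimate descends to $CF(L,F_q;\mathcal{V})$. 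Your sketch would need one of these two steps made explicit for the argument to close; the ``main obstacle'' you flag in your last paragraph is precisely this, and it is resolved by the local-vs-global machinery rather than by the specific shape of the Hamiltonian.
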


\begin{proof}
  The proof is based on standard arguments, hence we will only outline
  it.

  The statements in the Lemma follows from the following somewhat
  stronger statement: {\sl All the $\fuk(\mathcal{V})$-modules
    corresponding to $F_{q}$, $q \in N$, are at a bounded distance one
    from the other in the sense of Definition~\ref{d:dit-mod}.}

  Here is an outline of the proof of the stronger statement.  Since
  $N$ is compact, it is enough to prove the statement locally for
  $q \in N$. Fix $q_0 \in N$ and let $B' \subset N$ be a ball chart
  around $q_0$ and $\overline{B} \subset B'$ a smaller closed ball
  around $q_0$.

  We claim that there exists $r'_0 > r_0$, a compact subset
  $K \subset B'$ and a family of Hamiltonian functions
  $H^{(q)}: [0,1] \times T^*(N) \longrightarrow \mathbb{R}$,
  parametrized by $q \in \overline{B}$, such that the following holds:
  \begin{enumerate}
  \item All the functions $H^{(q)}$, $q \in \overline{B}$, are
    compactly supported in
    $\mathcal{V}' := T^*_{< r'_0}(N) \cap \pi^{-1}(K)$, where
    $\pi: T^*(N) \longrightarrow N$ is the projection.
  \item The family $H^{(q)}$ depends smoothly on $q \in
    \overline{B}$. In particular, the Hofer norm of the elements of
    the family is uniformly bounded in $q$:
    $\sup_{q \in \overline{B}} \int_0^1 \| H^{(q)}_t \|_{\text{osc}}
    \, dt < \infty$. Here $H_t^{(q)}(x) := H^{(q)}(t,x)$ and for a
    compactly supported $H:T^*(N) \longrightarrow \mathbb{R}$,
    $\|H\|_{\text{osc}}$ stands for its $L^{\infty}$-oscillation norm
    $\|H\|_{\text{osc}} := \max H - \min H$.
  \item $\phi_t^{H^{(q)}} (\mathcal{V}) = \mathcal{V}$ for every
    $t \in [0,1]$, $q \in \overline{B}$.
  \item $\phi_1^{(q)}(F_{q_0}) = F_{q}$ for every
    $q \in \overline{B}$.
  \end{enumerate}
  The existence of a family $H^{(q)}$ with the above properties is
  straightforward.

  Let $q \in \overline{B}$ and
  $L \subset \textnormal{Int\,}\mathcal{V}$ a marked exact
  Lagrangian. Without loss of generality assume that
  $L \pitchfork F_{q_0}$ and $L \pitchfork F_q$.  Pick a regular
  almost complex structure $J$ as in~\S\ref{sbsb:HF}.  For a domain
  $\mathcal{U} \subset T^*(N)$ and two transverse marked exact
  Lagrangians $L', L'' \subset \mathcal{U}$ we denote by
  $CF(L',L''; (0,J); \mathcal{U})$ the Floer complex of $(L', L'')$
  with Floer data $(H \equiv 0,J)$ inside the domain $\mathcal{U}$,
  whenever well defined.

  By standard arguments in Floer theory there is a quasi-isomorphism
  \begin{equation} \label{eq:L-Fq} \varphi_{(q)}: CF(L, T^*_{q_0};
    (0,J); T^*(N)) \longrightarrow CF(L,
    \phi_1^{H^{(q)}}(T^*_{q_0}(N)); (0,J); T^*(N)),
  \end{equation}
  of weight $\leq 2C_1(q) + C_2(q)$, where
  $C_1(q) = \int_0^1 \| H^{(q)}_t \|_{\text{osc}} \, dt$ and $C_2(q)$
  is a constant that depends only on $\mathcal{V}'$ and on the
  $C^2$-size of $H^{(q)}$ in a continuous way. See
  Definition~\ref{d:dit-mod} (and the discussion after it) for
  weighted quasi-isomorphisms. Here we endow the exact Lagrangian
  $\phi_1^{H^{(q)}}(T^*_{q_0}(N))$ with a primitive function that is
  $0$ along $\phi_1^{H^{(q)}}(T^*_{q_0}(N)) \cap \mathcal{V} = F_q$.

  The quasi-isomorphisms $\varphi_q$ and its homotopy inverse can be
  constructed either by counting solutions of the Floer equation with
  moving boundary conditions, or alternatively, by applying the
  standard continuation map (comparing the $0$-Hamiltonian with
  $H^{(q)}$) followed by a naturality map as in~\eqref{eq:nat-G}. (The
  generalization in terms of $A_{\infty}$-modules corresponding to
  $T^*_{q_0}(N)$ and $\phi_1^{H^{(q)}}(T^*_{q_0}(N))$ can be
  established by similar methods.)  The bound on the weight of
  $\varphi_q$ follows from standard action-energy estimates in Floer
  theory.

  An important point about the previous weight is that it does not
  depend on $L$ and moreover that
  $\sup_{q \in \overline{B}} (2C_1(q) + C_2(q)) < \infty$.

  By choosing $J$ appropriately near the boundary of $\mathcal{V}$
  (and along $T^*(N) \setminus \mathcal{V}$) an argument based on the
  maximum principle (or alternatively, arguing as in the proof of
  Proposition~\ref{p:loc-glob}) shows that all the Floer trajectories
  contributing to any of the chain complexes
  $CF(L, T^*_{q_0}; (0,J); T^*(N))$ and
  $CF(L, \phi_1^{H^{(q)}}(T^*_{q_0}(N)); (0,J); T^*(N))$ must be
  entirely contained inside $\mathcal{V}$. (An analogous statement
  holds also for Floer polygons contributing to the higher order
  operations of the modules corresponding to $T^*_{q_0}(N)$ and
  $\phi_1^{H^{(q)}}(T^*_{q_0}(N))$ as long as we view them as modules
  over the Fukaya category of $\mathcal{V}$.)

  Since $\phi_1^{H^{(q)}}(T^*_{q_0}(N)) \cap \mathcal{V} = F_{q}$ and
  $T^*_{q_0}(N) \cap \mathcal{V} = F_{q_0}$, the statement we wanted to
  prove follows.
\end{proof}

We are now ready to prove the main theorem.
\begin{proof}[Proof of Theorem~\ref{t:main-a}]
  Fix a small $r_0>0$ and tubular neighborhood
  $\mathcal{V} = T^*_{\leq r_0}(N)$ of $N$. Recall
  from~\S\ref{sb:bcbundle-rlef} the symplectic embedding
  $\kappa: \mathcal{V} \longrightarrow E$ and its image
  $\mathcal{U}:= \kappa(\mathcal{V}) \subset E$.

  We now appeal to the cone decomposition~\eqref{eq:L-cone}
  from~\S\ref{s:cone-decomp} of Yoneda modules over $\fuk(E')$.  We
  apply this to the Lagrangian $K = N$ (i.e. the zero section) and its
  Yoneda module $\mathcal{N}$. Let $L \subset \mathcal{U}$ be any
  exact Lagrangian. \ocnote{The filtered cone decomposition of $\mathcal{N}$, as described 
  in Propositions \ref{prop:filtered-cone} and \ref{prop:quasi-shift}}, 
   gives a filtered cone decomposition of the chain complex
  $CF(L,N; E')$, which by the
  formulas~\eqref{eq:L-cone},~\eqref{B_j-cone},~\eqref{eq:Bjd}
  involves the following types of filtered chain complexes as well as their
  tensor products:
  \begin{enumerate}
  \item $CF(L, S_i;E')$, $CF(S_i, N;E')$, $i=1, \ldots, k$.
  \item $CF(S_{j''}, S_{j'}; E')$, $1\leq j' < j'' \leq
    k$. \label{i:Sj}
  \item $CF(L, N^{(k)}; E')$.
  \end{enumerate}
 The chain complexes in~(\ref{i:Sj}) do not depend on $L$.
  In particular their spectral invariants and boundary depths are
  independent of $L$.
  
  Formulas~\eqref{eq:L-cone}-\eqref{eq:Bjd} together with
  Proposition~\ref{p:sig-beta-tensor} and Lemma~\ref{l:rho-iterated}
  imply that there are constants $A_1, B_1, C_1 > 0$, that do not
  depend on $L$, such that:
  $$\rho(CF(L,N; E')) \leq A_1 
  \widetilde{\rho}(CF(L,S_1; E'), \ldots, CF(L,S_k; E')) +
  B_1\sum_{i=1}^k \beta(CF(L_,S_i); E') + C_1.$$

  Passing from $E'$ to $E$, as described in~\S\ref{s:Fl-E-E'}, we have
  action preserving chain isomorphisms $CF(L,N;E) \cong CF(L,N; E')$
  and $CF(L, \thmb_{x_i}; E) \cong CF(L,S_i; E')$ for every
  $1 \leq i \leq k$. Consequently, the spectral invariants and
  boundary depths of the chain complexes in $E$ coincide with the
  corresponding ones in $E'$.

  Next we appeal to Proposition~\ref{p:loc-glob} (with
  $W_0 = \mathcal{U}$, $V = E$ and $L_0=L$, $L_1 = N$) and to
  Proposition~\ref{p:loc-glob-t} and deduce that
  $$\rho(CF(L,N;\mathcal{U})) \leq A_1 
  \widetilde{\rho}(CF(L,F_{q_1};\mathcal{U}), \ldots, CF(L,F_{q_k};
  \mathcal{U})) + B_1\sum_{i=1}^k \beta(CF(L_,F_{q_i}); E') + C_1,$$
  where $q_i = \kappa^{-1}(x_i) \in N$.

  Put $q := q_1$. By Lemma~\ref{l:diff-fibers} we have that both
  $|\sigma(CF(L, F_q)) - \sigma(CF(L, F_{q_i}))|$ as well as
  $|\beta(CF(L, F_q)) - \beta(CF(L, F_{q_i}))|$ are uniformly bounded
  (with respect to $L$ and $i$), hence there exist constants
  $A_2, B_2> 0$, that do not depend on $L$, such that
  $$\rho(CF(L,N;\mathcal{U})) \leq A_2 + B_2\beta(CF(L, F_q)).$$
  Now, $\gamma(L, N) \leq \rho(CF(L,N;\mathcal{U}))$, hence
  $$\gamma{(L,N) }\leq A_2 + B_2\beta(CF(L, F_q))$$ for all exact
  Lagrangians $L \subset \mathcal{U}$. The last inequality together
  with the triangle inequality for $\gamma$ imply
  inequality~\eqref{eq:bound-main-thm} and conclude the proof of
  Theorem~\ref{t:main-a}.
\end{proof}

\subsection{Boundedness of the spectral metric implies boundedness of
  $\beta(CF(L,F_q)$} \label{sb:specm-betta-bound} Here we outline an
argument showing the statement at point~(\ref{i:vit-beta}) of
Remark~\ref{r:main-thm}. Namely, if the function
$$\mathcal{L}_{\text{ex}, N}(U) \ni L \longmapsto \gamma(N,L)$$ is
bounded, then
$$\mathcal{L}_{\text{ex}, N}(U) \ni L \longmapsto \beta(CF(L, F_q))$$
is bounded too. In other words the conjecture of Viterbo from
page~\pageref{p:vit-conj} implies the boundedness of the boundary
depths $CF(-, F_q)$ over the collection of exact Lagrangians
$L \subset U$ that are exact isotopic to the zero-section $N$.

Here is an outline of the proof. Let
$L \in \mathcal{L}_{\text{ex}, N}(U)$ and assume without loss of
generality that $L \pitchfork N$, $L \pitchfork F_q$. Fix an arbitrary
marking for $L$ and mark $N$ and $F_q$ by taking their primitive
functions to be identically $0$. Put
$$\alpha_+ = c([N]; N,L), \quad \alpha_{-} = c([N]; L,N).$$
We have $\alpha_+ + \alpha_{-} = \gamma(N,L)$. Note that $\alpha_+$
and $\alpha_{-}$ depend on the marking of $L$ but their sum
$\alpha_{+}+\alpha_{-}$ does not. Also note that $\beta(CF(N,L))$ is
independent of the marking of $L$.

We will now need to carry out a chain-level calculation with Floer
complexes. To this end we take the Floer complexes $CF(N,L)$,
$CF(L,N)$, $CF(N,F_q)$ and $CF(L,F_q)$ with Floer data having $0$
Hamiltonian terms. We also fix a Floer datum for $(L,L)$ whose
Hamiltonian term is induced from a $C^2$-small Morse function
$L \longrightarrow \mathbb{R}$ with a unique critical point of top
index, so that the unity $e_L \in HF(L,L)$ has a unique representing
cycle in $CF(L,L)$.

Fix $\epsilon>0$. Choose perturbation data for each of the tuples
$(N,L,N)$, $(L,N,L)$, $(N,L,F_q)$ and $(L,N, F_q)$ which are
compatible with the previous choices of Floer data and such that the
associated $\mu_2$-operations shift action by $\leq \epsilon$.

Let $a \in CF^{\leq \alpha_+}(N,L)$,
$b \in CF^{\leq \alpha_{-}}(L,N)$, be cycles representing the Floer
homology classes $\mathcal{N}^{N}_{L,N}([N])$ and
$\mathcal{N}_{N}^{N,L}([N])$ (see~\S\ref{sbsb:pss-nat}). Consider the
following two filtered chain maps:
\begin{equation} \label{eq:phi-CFLN}
  \begin{aligned}
    & \varphi: CF(L,F_q) \longrightarrow CF(N,F_q), \quad \varphi(x) :=
    \mu_2(a,x), \\
    & \phi: CF(N,F_q) \longrightarrow CF(L,F_q), \quad \phi(y) := \mu_2(b,y).
  \end{aligned}
\end{equation}
By our choices of data, $\varphi$ shifts action by $\leq \alpha_+$ and
$\phi$ by $\leq \alpha_{-}$. Note that $CF(N,F_q) = \mathbb{Z}_2 q$
and it is easy to see that $\varphi \circ \phi = \id$. We claim that
$\phi \circ \varphi$ is chain homotopic to the identity via a chain
homotopy $H$ that shifts action by $\leq \gamma(N,L) + \epsilon$,
where $\epsilon > 0$ can be taken to be arbitrarily small.

Before proving the last claim, let us see how it implies the main
statement we want to prove. For this purpose we would like to use
Lemma~\ref{lem:quasieq-inv} which compares the boundary depths of two
chain complexes that are chain homotopy equivalent (specifically in
our case, $CF(L,F_q)$ and $CF(N,F_q)$). However in order to employ
Lemma~\ref{lem:quasieq-inv} we need the shifts of each of $\varphi$
and $\phi$ to be non-negative and we also need to relate each of these
shifts to the shift of the chain homotopy $H$ which is claimed to be
$\gamma(N,L) + \epsilon$. The ``problem'' is that $\varphi$ and $\phi$
have shifts of $\leq \alpha_{+}$ and $\leq \alpha_{-}$ respectively
and we do not have information on the size of each of them alone - we
only know that $\alpha_{+} + \alpha_{-} = \gamma(N,L)$.

To go about this technical problem we proceed as follows. We shift the
marking of $L$ by a constant such that $\alpha_{-}=0$. Consequently
$\alpha_+$ will now become equal to $\gamma(N,L)$. We thus assume from
now on that $\alpha_{-}=0$ and $\alpha_{+} = \gamma(N,L)$. Under these
circumstances we can now apply Lemma~\ref{lem:quasieq-inv} and obtain
that
$|\beta(CF(L,F_q)) - \beta(CF(N,F_q))| \leq 2 \gamma(N,L) +
2\epsilon$. Since $\beta(CF(N,F_q))=0$ and by assumption $\gamma(N,-)$
is bounded, the main statement follows.

It remains to show the existence of the required chain homotopy $H$
between $\phi \circ \varphi$ and the $\id$. Consider the tuple of
Lagrangians $(L,N,L,F_q)$. Choose Floer perturbation data for this
tuple, which is compatible with the previous choices of Floer data,
and such that the following holds:
\begin{enumerate}
\item $\mu_2(\mu_2(b,a), x) = x$ for every $x \in CF(L, F_q)$. (Note
  that by our choices of Floer data, $\mu_2(b,a) \in CF(L,L)$ is the
  unique cycle representing the unity $e_L \in HF(L,L)$.)
\item The operation
  $\mu_3: CF(L,N) \otimes CF(N,L) \otimes CF(L, F_q) \longrightarrow
  CF(L,F_q)$ shifts action by $\leq \epsilon$.
\end{enumerate}

By standard $A_{\infty}$-identities (applied with
$\mathbb{Z}_2$-coefficients) we have for every $x \in CF(L,F_q)$:
\begin{equation}
  \begin{aligned}
    \phi \circ \varphi(x) = & \mu_2(b, \mu_2(a,x)) = \\
    & \mu_2(\mu_2(b,a), x) + \mu_3(b,a, \mu_1(x)) + \mu_1 \mu_3(b,a,x) = \\
    & x + \mu_3(b,a, \mu_1(x)) + \mu_1 \mu_3(b,a,x).
  \end{aligned}
\end{equation}
The required homotopy $H: CF(L,F_q) \longrightarrow CF(L,F_q)$ is then
$H(x) := \mu_3(b,a,x)$. And it clearly shifts action by
$\leq \gamma(N,L) + \epsilon$. \Qed

\ocnote{
\begin{rem}\label{rem:bounds-arg} A similar argument appears, for a different purpose, in
\cite{Kis-Sh}. At a conceptual level these arguments are a reflection of a Yoneda type lemma
 in the filtered setting that allows 
translation of relations among morphisms  of Yoneda modules  (over the  the $A_{\infty}$ Fukaya category)  in terms of $\mu^{k}$ operations. Such a result, called there the $\lambda$-lemma, appears in \cite{Bi-Co-Sh:lshadows-long}.
\end{rem}
}


\section{Filtered homological algebra} \label{s:fhomalg}

The purpose of this section is to establish a number of algebraic
results that allow control of the spectral range and boundary depth of
filtered complexes through cone-attachments.

\subsection{Background on filtered
  complexes} \label{subs:filtered-complexes}

We consider here filtered modules $C$ over a ring $R$. We assume the
filtration to be indexed by the reals and increasing, namely for every
$\alpha \in \mathbb{R}$ we have a submodule
$C^{\leq \alpha} \subset C$ and
$C^{\leq \alpha}\subset C^{\leq \alpha '}$ for $\alpha\leq \alpha'$.
For simplicity we will always assume that the filtration is
exhaustive, i.e.~$\cup_{\alpha \in \mathbb{R}} C^{\leq \alpha} = C$.

The shift of order $s \in \mathbb{R}$ of a filtered module $C$ is the
filtered module $C[s]$ defined by
$(C[s])^{\leq \alpha}=C^{\leq \alpha+s}$. (Despite the similarity in
notation, this has nothing to do with grading-shifts. In fact in this
paper we work in an ungraded setting.) An $R$-linear map
$f: C \longrightarrow C'$ between two filtered modules is called
$s$-{\em filtered} if $f(C^{\alpha}) \subset (C')^{\leq \alpha+s}$ for
all $\alpha \in \R$. We will refer to such a number $s$ as an
admissible shift for the map $f$. We will also say that $f$ shifts
action by $\leq s$, or sometimes that $f$ is filtered of shift
$s$. Notice that if $f$ is $s$-filtered then it is also $s'$-filtered
for all $s' \geq s$. An $R$-linear map $f:C \longrightarrow C'$ is
called {\em filtered} if it is $s$-filtered for some $s\geq 0$. For
reasons of convenience we will consider only shifts $s$ that are
non-negative. There is no loss of generality in doing that as any map
that shifts action by a negative amount can be viewed as
$0$-filtered. A slight drawback of this convention is that some of the
estimates on invariants of filtered chain complexes developed below
will be less sharp. Since our applications are concerned with coarse
estimates this will not play an important role in our considerations.

Let $C$ be a filtered chain complex or $R$-modules. This means that
$C$ is a filtered module and the differential $d$ of $C$ preserves the
filtration, i.e.~$d(C^{\leq \alpha}) \subset C^{\leq \alpha}$ for every
$\alpha$. To such a chain complex we can associate a persistence
module $H^{\leq \bullet}(C)$ consisting of the homologies of the
subcomplexes of $C$ prescribed by the filtration:
$$H^{\leq \alpha}(C)= H (C^{\leq \alpha}), \quad i^{\beta, \alpha}:
H^{\leq \alpha}(C) \longrightarrow H^{\leq\beta}(C), \ \alpha\leq
\beta, $$ where the maps $i^{\beta, \alpha}$ are induced by the
inclusions $C^{\leq \alpha} \subset C^{\leq \beta}$. We also have the
maps $i^{\alpha}: H^{\leq\alpha}(C)\to H(C)$ induced by the inclusions
$C^{\leq \alpha} \subset C$.

The boundary depth of the filtered complex $C$ is defined as:
$$\beta(C)=\inf \{b\in [0,\infty) \mid
\forall \alpha\in \R,\ \ker (i^{\alpha})=\ker (i^{\alpha+b,
  \alpha})\}.$$ For every $a \in H(C)$ we define the spectral
invariant $\sigma(a)$ by
$$\sigma(a) =
\inf\{\alpha \mid  a \in \textnormal{image\,} i^{\alpha}\}.$$
We also define
\begin{equation*}
  \begin{aligned}
    & \sigma_{+}(C)=\inf \{r\in \R \mid t \geq r \Rightarrow\
    \mathrm{Coker\/}(i^{t})=0\}, \\
    & \sigma_{-}(C)= \sup \{s\in \R \mid t\leq s \Rightarrow \ i^{t}=0
    \}, \\
    & \rho(C)= \sigma_{+} (C)- \sigma_{-}(C).
  \end{aligned}
\end{equation*}
As the notation suggests $\sigma_+(C)$ is the top (or supremal)
spectral invariant of $C$ and $\sigma_{-}(C)$ is the bottom (or
infemal) one. We call $\rho(C)$ the {\em spectral range} of $C$.

\begin{rem}\label{rem:bars}
  The notions above can easily be reformulated in terms of the modern
  terminology of {\em barcodes}~\cite{PRSZ:top-pers}.  For instance
  $\beta (C)$ is the length of the longest finite bar of $C$.
  Further, if the bar code associated to $H^{\leq \bullet}(C)$ is the
  collection $\{[i_{k},j_{k})\}$, then $\sigma_{+}(C)$ is the minimal
  $i_{k}$ among all bars with $j_{k}=\infty$ and $\sigma_{-}(C)$ is
  the maximal $i_{k}$ among the same (infinite) bars.
\end{rem}

We now describe the behavior of $\sigma$ and $\beta$ with respect to
some operations with filtered chain complexes. We begin with the
simple remark that if $f:C \longrightarrow C'$ is a quasi-isomorphism
and is $s$-filtered then we have:
\begin{equation}
  \label{eq:qi1}
  \sigma_{-}(C)\geq \sigma_{-}(C')-s, \quad 
  \sigma_{+}(C)\geq \sigma_{+}(C')-s~.~
\end{equation}
In particular, if $f$ admits an $s$-filtered homological inverse, we
deduce
$$|\sigma_{\pm}(C)-\sigma_{\pm}(C')|\leq s\ , \
|\rho(C)-\rho(C')|\leq 2s~.~$$

In order to relate the boundary depth of two quasi-isomorphic chain
complexes we will need the notion of boundary depth of a map. Let
$f:C \longrightarrow C'$ be a filtered chain map and let $s \geq 0$ be
an admissible shift for $f$. The map $f$ induces a map of persistence
modules
$f^{\bullet}_*:H^{\leq \bullet}(C) \longrightarrow H^{\leq
  \bullet}(C')[s]$, $f_*^{\bullet} = \{f_*^{\alpha}\}$ with
$f_*^{\alpha}: H^{\leq \alpha}(C) \longrightarrow H^{\leq
  \alpha+s}(C')$ induced by $f$.  We define the boundary depth of $f$,
viewed as an $s$-filtered map, by:
$$\beta_s(f)=\inf\{ b \in [0,\infty) \mid
\forall \alpha \in \R,\ \mathrm{Image \/}(f_*^{\alpha})\cap \ker
(i^{\alpha+s})\subset \ker (i^{\alpha+s+b, \alpha+s})\}~.~$$ Clearly
$\beta(C)=\beta_0(id_{C})$, $\beta_s(f)\leq \beta(C')$ and for
$s\leq s'$, $\beta_{s'}(f)=\max\{0,\beta_s(f)-s'+s\}$.

Assume now that $f:C \longrightarrow C'$, $g:C' \longrightarrow C$ are
$s$-filtered chain maps with $g_* \circ f_* =id$ in homology. We have
the inequality:
\begin{equation}\label{eq:htpyeq1}
  \beta(C) \leq \max\{\beta (C')+2s,
  \beta_{2s}(g\circ f - id_{C})\} ~.~
\end{equation}

The simplest way to control the boundary depth of maps as above is by
using filtered homotopies.  Let $f,f':C \longrightarrow C'$ be two
$s$-filtered maps that are homotopic with a homotopy $h:f\simeq f'$
which is $s'$-filtered, then:
\begin{equation}\label{eq:htpy-filt}
  \beta_s(f-f')\leq \min\{0, s'-s\} ~.~
\end{equation}
Assume now that $f:C \longrightarrow C'$, $g:C' \longrightarrow C$ are
$s$-filtered chain maps such that there is an $s$-filtered homotopy
$h:g\circ f\simeq id_{C}$.  In this case,
$\beta_{2s}(g\circ f-id_{C})=0$ and we deduce that
$$\beta(C)\leq \beta(C')+2s~.~$$
Summing up:

\begin{lem}\label{lem:quasieq-inv}
  If $f:C \longrightarrow C'$ and $g:C' \longrightarrow C$ are
  $s$-filtered and there are $s$-filtered chain homotopies
  $h:g\circ f\simeq id_{C}$ and $h':f \circ g \simeq id_{C'}$, then we
  have:
  \begin{equation}\label{eq:he}
    |\beta(C)-\beta(C')|\leq 2s \ , \
    |\sigma_{\pm}(C)-\sigma_{\pm}(C')|\leq s\ , \
    |\rho(C)-\rho(C')|\leq 2s~.~
  \end{equation}
\end{lem}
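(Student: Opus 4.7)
The plan is to deduce the lemma directly from the inequalities already displayed in~\S\ref{subs:filtered-complexes}, namely~\eqref{eq:qi1}, \eqref{eq:htpyeq1}, and~\eqref{eq:htpy-filt}. No new homological input is needed; the argument is purely an unpacking of these three estimates once we observe that the hypotheses make $f_*$ and $g_*$ mutually inverse on total homology and that $g\circ f$ is $2s$-filtered.

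For the spectral invariants, the first step is to note that since $h$ and $h'$ exist as chain homotopies (ignoring filtrations), $f$ is a chain-homotopy equivalence and in particular $g_*\circ f_*=\id$ and $f_*\circ g_*=\id$ on $H(C)$ and $H(C')$. Hence $g$ is a homological inverse of $f$ and both are $s$-filtered. Applying~\eqref{eq:qi1} to $f$ gives $\sigma_\pm(C)\geq \sigma_\pm(C')-s$, and applying it to $g$ gives $\sigma_\pm(C')\geq \sigma_\pm(C)-s$. Combining these yields $|\sigma_\pm(C)-\sigma_\pm(C')|\leq s$, and then $|\rho(C)-\rho(C')|\leq 2s$ follows by the triangle inequality applied to $\rho=\sigma_+-\sigma_-$.

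For the boundary depth, I first observe that the composition $g\circ f$ is $2s$-filtered, and the homotopy $h\colon g\circ f\simeq \id_C$ is $s$-filtered, hence also $2s$-filtered. Treating $g\circ f - \id_C$ as a $2s$-filtered chain map and applying~\eqref{eq:htpy-filt} with $s$ replaced by $2s$ and $s'$ equal to (at most) $2s$, one gets $\beta_{2s}(g\circ f-\id_C)=0$. Plugging this into~\eqref{eq:htpyeq1} produces
\[
\beta(C)\leq \max\{\beta(C')+2s,\,0\}=\beta(C')+2s.
\]
The symmetric argument, swapping the roles of $(C,f,h)$ and $(C',g,h')$ and using the $s$-filtered homotopy $h'\colon f\circ g\simeq \id_{C'}$, gives $\beta(C')\leq \beta(C)+2s$. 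Together these bounds prove $|\beta(C)-\beta(C')|\leq 2s$.

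Since both halves of the argument consist of invoking a previously displayed inequality and then using the symmetric roles of $f$ and $g$, there is no genuine technical obstacle. The only care to be taken is a bookkeeping point: to apply~\eqref{eq:htpyeq1} cleanly one must work with the common shift $2s$ (rather than $s$) for both $g\circ f$ and the homotopy $h$, which is legitimate because any $s$-filtered map is automatically $s'$-filtered for $s'\geq s$.
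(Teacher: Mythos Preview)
Your proposal is correct and follows essentially the same route as the paper: the lemma is stated as a summary of the discussion immediately preceding it, which derives the spectral estimates from~\eqref{eq:qi1} applied symmetrically to $f$ and $g$, and the boundary-depth estimate by combining~\eqref{eq:htpyeq1} with the observation (via~\eqref{eq:htpy-filt}) that $\beta_{2s}(g\circ f-\id_C)=0$, then swapping roles. Your added remark about passing to the common shift $2s$ is the same bookkeeping the paper alludes to just before stating the lemma.
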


\subsection{Mapping cones} \label{sb:mcones}

Let $A$, $B$ be filtered chain complexes and $f: A \longrightarrow B$
an $s$-filtered chain map. The filtered mapping cone
$[A \xrightarrow{\; (f,s) \;} B]$ of $f$ is the mapping cone of $f$
endowed with the following filtration:
\begin{equation} \label{eq:cone-filtration} [A \xrightarrow{\; (f,s)
    \;} B]^{\leq \alpha}=A^{\leq \alpha-s}\oplus B^{\leq \alpha}.
\end{equation}
Of course, this choice of filtration is somewhat ad-hoc and there are
other possibilities. Firstly, once can shift the above filtration by
any real number. The reason for the specific choice
in~\eqref{eq:cone-filtration} is to make the inclusion
$B \longrightarrow [A \xrightarrow{\; (f,s) \;} B]$ action
preserving. Secondly, the filtration in~\eqref{eq:cone-filtration}
depends on $s$ (and therefore this parameter appears in the notation).

We will now estimate the boundary depth and spectral range of the
mapping cone in terms of the invariants of its factors.

\begin{lem} \label{lem:spectral-cones} Let
  $C := [A \xrightarrow{\; (f,s) \;} B]$. We have the following
  inequalities:
  \begin{equation}\label{eq:delta-cone}
    \sigma_{-}(C)\geq\min\{\sigma_{-}(B)-\beta(A),\sigma_{-}(A) +s\}, 
  \end{equation}
  
  \begin{equation} \label{eq:delta-cone2} \sigma_{+}(C)\leq
    \max\{\sigma_{+}(B), \sigma_{+}(A)+\beta(B)+s\}
  \end{equation}
  and
  \begin{equation}\label{eq:bdryDepthCone}
    \beta(C)\leq \beta(A)+\beta(B) + 
    \max\{0, \sigma_{+}(A)-\sigma_{-}(B) +s\}~.~
  \end{equation}
\end{lem}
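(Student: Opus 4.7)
My plan is to prove all three inequalities by direct cycle–and–primitive manipulations inside the mapping cone, where $C=A\oplus B$ carries the differential $d_C(a,b)=(d_Aa,\,f(a)+d_Bb)$ (mod~$2$, as in the rest of the paper) and filtration $C^{\leq\alpha}=A^{\leq\alpha-s}\oplus B^{\leq\alpha}$. The one structural fact needed throughout is the strict short exact sequence of filtered complexes $0\to B\xrightarrow{i}C\xrightarrow{p}A\to 0$, $i(b)=(0,b)$, $p(a,b)=a$, whose associated exact triangle in homology reads $H(A)\xrightarrow{f_*}H(B)\xrightarrow{i_*}H(C)\xrightarrow{p_*}H(A)$. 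The boundary depths $\beta(A),\beta(B)$ enter only via their defining property: every $d_X$-boundary at filtration $\mu$ admits a primitive at filtration $\mu+\beta(X)$, for $X\in\{A,B\}$ (up to an arbitrarily small $\epsilon$, which I suppress). For~\eqref{eq:delta-cone}, pick $t<\min\{\sigma_-(B)-\beta(A),\,\sigma_-(A)+s\}$ and a cycle $(a,b)\in C^{\leq t}$. Since $t-s<\sigma_-(A)$, the cycle $a$ is a $d_A$-boundary, so by $\beta(A)$ there is $a'\in A^{\leq t-s+\beta(A)}$ with $d_Aa'=a$. Then $b+f(a')\in B^{\leq t+\beta(A)}$ is a cycle, and $t+\beta(A)<\sigma_-(B)$ makes it a $d_B$-boundary $d_Bb'$; so $d_C(a',b')=(a,b)$, the class of $(a,b)$ vanishes in $H(C)$, and $i_C^{t}=0$.

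The proof of~\eqref{eq:delta-cone2} is the dual. Given $[c]\in H(C)$ and $\epsilon>0$, set $[a]:=p_*[c]\in\ker f_*$ and choose a representative $\tilde a\in A^{\leq\sigma_+(A)+\epsilon}$ of $[a]$. Since $f(\tilde a)$ is a $d_B$-boundary at filtration $\leq\sigma_+(A)+s+\epsilon$, $\beta(B)$ provides $\tilde b\in B^{\leq\sigma_+(A)+s+\beta(B)+\epsilon}$ with $d_B\tilde b=f(\tilde a)$. Then $(\tilde a,\tilde b)$ is a cycle in $C$ with $p_*[(\tilde a,\tilde b)]=[a]$, so $[c]-[(\tilde a,\tilde b)]\in\mathrm{image}(i_*)$; the definition of $\sigma_+(B)$ produces a representative $(0,b'_0)$ of this difference with $b'_0\in B^{\leq\sigma_+(B)+\epsilon}$. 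Summing yields a representative of $[c]$ at $C$-filtration $\leq\max\{\sigma_+(A)+s+\beta(B),\,\sigma_+(B)\}+\epsilon$, and $\epsilon\to 0$ gives the inequality.

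The real work is~\eqref{eq:bdryDepthCone}. Let $(a,b)\in C^{\leq\alpha}$ be a $C$-boundary; I seek a primitive at filtration $\leq\alpha+\beta(A)+\beta(B)+\max\{0,\sigma_+(A)-\sigma_-(B)+s\}$. Choose $a'\in A^{\leq\alpha-s+\beta(A)}$ with $d_Aa'=a$, producing a cycle $b+f(a')\in B^{\leq\alpha+\beta(A)}$. When $\alpha+\beta(A)\leq\sigma_-(B)$ this cycle is automatically a boundary and $\beta(B)$ gives $b'\in B^{\leq\alpha+\beta(A)+\beta(B)}$ with $d_C(a',b')=(a,b)$. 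Otherwise, the identity $(a,b)=d_C(a',0)+(0,b+f(a'))$ combined with $(a,b)$ bounding in $C$ forces $i_*[b+f(a')]=0$, so by exactness $[b+f(a')]=f_*[\tilde a]$ for some $[\tilde a]\in H(A)$, which by $\sigma_+(A)$ admits a representative in $A^{\leq\sigma_+(A)+\epsilon}$. Replacing $a'$ by $a'+\tilde a$ preserves $d_A(a'+\tilde a)=a$ and turns $b+f(a'+\tilde a)$ into an honest $B$-boundary at filtration $\leq\max\{\alpha+\beta(A),\,\sigma_+(A)+s+\epsilon\}$; applying $\beta(B)$ yields $b'$ with $d_C(a'+\tilde a,b')=(a,b)$ at $C$-filtration $\leq\max\{\alpha+\beta(A),\,\sigma_+(A)+s+\epsilon\}+\beta(B)$. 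A short case split, using $\alpha+\beta(A)>\sigma_-(B)$, collapses this maximum to $\alpha+\beta(A)+\beta(B)+\max\{0,\sigma_+(A)-\sigma_-(B)+s\}$; letting $\epsilon\to 0$ closes the argument.

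The main obstacle is precisely the second case of~\eqref{eq:bdryDepthCone}: the correction term $\max\{0,\sigma_+(A)-\sigma_-(B)+s\}$ is invisible to any naive per-factor estimate, and appears only after recognizing that the obstruction to $b+f(a')$ bounding in $B$ sits inside $\mathrm{image}(f_*)\subset H(B)$ and can be corrected by a cycle in $A$ whose representative may be chosen at level $\sigma_+(A)$. The same exact triangle reasoning underlies~\eqref{eq:delta-cone2}, but there it enters more transparently since one starts from a class in $H(C)$ rather than from a boundary; this is why in~\eqref{eq:delta-cone2} the analogous $\max\{0,\cdot\}$ does not appear and one directly gets the clean max in the bound.
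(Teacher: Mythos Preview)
Your proof is correct and follows essentially the same strategy as the paper's: both use the short exact sequence $0\to B\to C\to A\to 0$, first apply $\beta(A)$ to kill the $A$-component, then split into cases according to whether the resulting $B$-cycle lies below $\sigma_-(B)$, and in the hard case correct by a class in $\mathrm{image}(f_*)$ represented at level $\sigma_+(A)$ before applying $\beta(B)$. The only difference is packaging: the paper argues with the filtered long exact sequences $H^{\leq\alpha-s}(A)\to H^{\leq\alpha}(B)\to H^{\leq\alpha}(C)\to H^{\leq\alpha-s}(A)$ and the comparison maps $i^{\beta,\alpha}$, whereas you construct explicit primitives at the chain level; the case analysis and the role of each invariant are identical.
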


Note that the estimates in the lemma do not depend on the chain map
$f$ (though they do depend on the amount of shift $s$ of $f$).

\begin{proof}[Proof of Lemma~\ref{lem:spectral-cones}]
  The basic ingredient in the proof is provided by the long exact
  sequences:
  $$\cdots \longrightarrow H^{\leq \alpha-s}(A)
  \stackrel{f}{\longrightarrow} H^{\leq \alpha}(B)
  \stackrel{h}{\longrightarrow}H^{\leq
    \alpha}(C)\stackrel{p}{\longrightarrow}H^{\leq\alpha-s}(A)\longrightarrow
  \cdots,$$ where $h$ is induced by inclusion and $p$ by the
  projection. The maps $i^{\alpha}$, $i^{\beta, \alpha}$ relate
  functorially these exact sequences.

  To see~\eqref{eq:delta-cone} let $c\in H^{\leq \alpha}(C)$,
  $\alpha< \min\{\sigma_{-}(B)-\beta(A),\sigma_{-}(A)+s\}$.  Then
  $p(c)\in H^{\leq \alpha-s}(A)$ and as $\alpha-s <\sigma_{-}(A)$,
  then $i^{\alpha -s}(p(c))=0$. This implies that
  $i^{\alpha + b-s, \alpha-s}(p(c))=0$ for all $b>\beta(A)$. We take
  $b$ sufficiently small such that $\alpha < \sigma_{-}(B)-b$.  Thus,
  there is $c'\in H^{\leq \alpha +b}(B)$ such that
  $h(c')=i^{\alpha +b, \alpha}(c)$.  But we also have that
  $\alpha + b< \sigma_{-}(B)$ so that $i^{\alpha + b}(c')=0$.
  Therefore $i^{\alpha}(c)=0$ which shows the the first inequality.

  The proof of~\eqref{eq:delta-cone2} is similar.  Indeed, if
  $\alpha>\max\{\sigma_{+}(B), \sigma_{+}(A)+\beta(B)+s\}$ and
  $c\in H(C)$, then fix $b>\beta(B)$ very close to $\beta(B)$ such
  that $\alpha> \sigma_{+}(A)+b+s$. There exists
  $c'\in H^{\leq \alpha - s-b}(A)$ such that
  $i^{\alpha - s-b}(c')=p(c)$ and moreover
  $f(i^{\alpha-s, \alpha - s-b}(c'))=0$. Let
  $c''=i^{\alpha-s, \alpha - s-b}(c')$. There is
  $c'''\in H^{\leq \alpha}(C)$ such that $p(c''')=c''$. Now
  $p(i^{\alpha}(c''')-c)=0$ therefore there is $\tilde{c}\in H(B)$
  such that $h(\tilde{c})=i^{\alpha}(c''')-c$. But
  $\alpha > \sigma_{+}(B)$ hence there is
  $\tilde{c}'\in H^{\leq \alpha}(B)$ such that
  $i^{\alpha}(\tilde{c}')=\tilde{c}$. It follows that
  $i^{\alpha}(c'''-h(\tilde{c}'))=c$ and thus
  $i^{\alpha}: H^{\leq \alpha}(C) \longrightarrow H(C)$ is surjective.

  Finally, to show~\eqref{eq:bdryDepthCone}, assume
  $r>\beta(A)+\beta(B)+\max\{0, \sigma_{+}(A)-\sigma_{-}(B)\}$ and let
  $c\in H^{\leq \alpha}(C)$ such that $i^{\alpha}(c)=0$. We want to
  show that $i^{\alpha +r, \alpha}(c)=0$. Note that
  $i^{\alpha +b-s, \alpha-s}(p(c))=0$ for $b>\beta(A)$.  Let
  $c'=i^{\alpha +b, \alpha}(c)$. Therefore, there is
  $c''\in H^{\leq \alpha +b}(B)$ with $h(c'')=c'$. In case
  $\alpha+b< \sigma_{-}(B)$, then $i^{\alpha+b}(c'')=0$ and thus for
  $b'>\beta(B)$ we have $i^{\alpha+b+b', \alpha+b}(c'')=0$. This
  implies that $i^{\alpha +b+b', \alpha}(c)=0$ and, by taking $b,b'$
  small enough, this shows $i^{\alpha+r, \alpha}(c)=0$. The other
  possibility to consider is when $\alpha +b\geq \sigma_{-}(B)$. In
  this case let $\hat{c}=i^{\alpha+b}(c'')$. As $h(\hat{c})=0$ there
  is $\hat{c}'\in H(A)$ such that $f(\hat{c}')=\hat{c}$.  Now consider
  $k > \max\{ 0,\sigma_{+}(A)+s-\sigma_{-}(B)\}$.  There exists
  $\hat{c}''\in H^{\leq \alpha+b-s +k}(A)$ such that
  $i^{\alpha+b-s +k}(\hat{c}'')=\hat{c'}$. Now
  $f(\hat{c}'')\in H^{\leq \alpha+b+k}(B)$ and
  $i^{\alpha+b+k}(i^{\alpha+b+k, \alpha+b}(c'')-f(\hat{c}''))=0$.
  Thus
  $i^{\alpha+b+b'+k,\alpha+b+k}(i^{\alpha+b+k,\alpha+b}(c'')-f(\hat{c}''))=0$
  which combined with $h(f(\hat{c}''))=0$ implies that
  $i^{\alpha+b+b'+k, \alpha}(c)=0$ which shows our claim by taking
  $b,b',k$ small enough.
\end{proof}

From inequalities~\eqref{eq:delta-cone},~\eqref{eq:delta-cone2}) we
deduce a simpler (but rougher) estimate for the spectral range of
$C = [A \xrightarrow{\; (f,s) \;} B]$:
\begin{equation}\label{eq:est-sprCone}
  \rho(C)\leq\max\{\sigma_{+}(A),\sigma_{+}(B)\} -
  \min\{\sigma_{-}(A),\sigma_{-}(B)\}+\beta(A)+\beta(B)+s
\end{equation}

It is important to note that one can not, in general, eliminate the
boundary depth from estimates such
as~\eqref{eq:delta-cone},~\eqref{eq:delta-cone2})
or~\eqref{eq:est-sprCone}) nor can one eliminate the spectral values
$\sigma_{+},\sigma_{-}$ from an estimate
like~\eqref{eq:bdryDepthCone}.

\begin{rems} \label{r:s-shift}
  \begin{enumerate}
  \item Above we have considered $s$-morphisms with $s \geq
    0$. Occasionally it makes sense to consider also the case $s<0$
    (such maps not only preserve filtrations but in fact shift them
    downwards by $(-s)$). The
    estimates~\eqref{eq:qi1}~-~\eqref{eq:est-sprCone} can be easily
    adjusted to the case $s<0$. However, for the applications needed in
    this paper it is enough to assume $s \geq 0$.
  \item \label{i:C'-C} Let $A$, $B$ be two filtered chain complexes
    and $f: A \longrightarrow B$ an $s$-filtered chain map, where we
    allow here any $s \in \mathbb{R}$ (also $s<0$). Let $s'\geq
    s$. Then $f$ is also an $s'$-filtered chain map. We can now endow
    the mapping cone of $f$ with two different filtrations,
    following~\eqref{eq:cone-filtration}, once using the shift $s$ and
    once the shift $s'$. Denote the corresponding filtered mapping
    cones by $C := [A \xrightarrow{\; (f,s) \;} B]$ and
    $C' := [A \xrightarrow{\; (f,s') \;} B]$. It easily follows
    (e.g.~from Lemma~\ref{lem:quasieq-inv}) that
    \begin{equation} \label{eq:cone-C-C'} |\sigma_{\pm}(C') -
      \sigma_{\pm}(C)| \leq s'-s, \quad |\rho(C') - \rho(C)|, \;
      |\beta(C') - \beta(C)| \leq 2(s'-s).
    \end{equation}
  \end{enumerate}
\end{rems}

Next we analyze equivalences of mapping cones, taking into account
filtrations. Consider the following diagram:
\begin{equation} \label{eq:d-square}
  \begin{gathered}
    \xymatrix{A' \ar[r]^{(f', s_{f'})} \ar[d]_{(\psi', s_{\psi'})} &
      B' \ar[d]^{(\phi', s_{\phi'})} \\
      A'' \ar[r]_{(f'',s_{f''})} & B''}
  \end{gathered}
\end{equation}
where $A'$, $B'$, $A''$, $B''$ be filtered chain complexes and the
notation on the edges of the square are pairs consisting of a filtered
chain map and an admissible shift. (E.g.~$(f',s_{f'})$ means that
$f': A' \longrightarrow B'$ is an $s_{f'}$-filtered chain map etc.)

We assume that~\eqref{eq:d-square} commutes up to an $s_{h'}$-filtered
chain homotopy $h': A' \longrightarrow B'$
(i.e.~$\phi' \circ f' - f'' \circ \psi' = dh + hd$) for some
$s_{h'} \geq s_{f'}, s_{\phi'}, s_{\psi'}, s_{f''}$. Further, assume
that $\psi'$ and $\phi'$ have filtered homotopy inverses, i.e.~there
exist an $s_{\psi''}$-filtered chain map
$\psi'': A'' \longrightarrow A'$ and an $s_{\phi''}$-filtered chain
map $\phi'': B'' \longrightarrow B'$ with
\begin{equation} \label{eq:psi''-phi''}
  \begin{aligned}
    & \psi'' \circ \psi' = dk'+ k'd, \quad & \psi' \circ \psi'' = dk''
    + k''d, \\
    & \phi'' \circ \phi' = dr'+ r'd, \quad & \phi' \circ \phi'' = dr''
    + r''d, \\
  \end{aligned}
\end{equation}
where $k': A' \longrightarrow A'$, $k'': A'' \longrightarrow A''$,
$r': B' \longrightarrow B'$, $r'': B'' \longrightarrow B''$ are
filtered linear maps. We denote by $s_{k'}$, $s_{k''}$, $s_{r'}$,
$s_{r''}$ admissible shifts for these maps.

Denote by $C(f',s_{f'}) := [A' \xrightarrow{\; (f',s_{f'}) \;} B']$ and
by $C(f'',s_{f''}) := [A'' \xrightarrow{\; (f'',s_{f''}) \;} B'']$ the
filtered mapping cones of $(f', s_{f'})$ and $(f'', s_{f''})$
respectively.

\begin{prop} \label{p:equiv-cones} There exist filtered chain maps
  $\varphi': C(f',s_{f'}) \longrightarrow C(f'',s_{f''})$ and
  $\varphi'': C(f'',s_{f''}) \longrightarrow C(f',s_{f'})$ that fit
  into the following diagrams:
  \begin{equation} \label{eq:d-cones}
  \begin{gathered}
    \xymatrix{A' \ar[r]^{f'} \ar[d]_{\psi'} &
      B' \ar[d]^{\phi'} \ar[r] & C(f', s_{f'}) \ar[d]^{\varphi'}
      \ar[r] & A' \ar[d]_{\psi'} \\
      A'' \ar[r]_{f''} & B \ar[r] & C(f'', s_{f''}) \ar[r] & A''}
  \end{gathered}
  \quad \quad 
  \begin{gathered}
    \xymatrix{A' \ar[r]^{f'} &
      B' \ar[r] & C(f', s_{f'})  \ar[r] & A' \\
      A'' \ar[u]^{\psi''} \ar[r]_{f''} & B \ar[u]_{\phi''} \ar[r] &
      C(f'', s_{f''}) \ar[u]_{\varphi''} \ar[r] & A'' \ar[u]^{\psi''}}
  \end{gathered}
\end{equation}
where the unmarked horizontal maps in both diagrams are the canonical
chain maps associated to cones. These maps are filtered. The left-hand
square in the 2'nd diagram commutes up to a filtered chain homotopy
$h''$.  The 2'nd and 3'rd squares, in each diagram, commute. The
compositions $\varphi'' \circ \varphi'$ and $\varphi' \circ \varphi''$
are chain homotopic to the identities via filtered chain homotopies
$H'$ and $H''$. Moreover, there exist admissible shifts
$s_{\varphi'}$, $s_{\varphi''}$, $s_{H'}$, $s_{H''}, s_{h''}$ for
$\varphi'$, $\varphi''$, $H'$, $H''$, $h''$, and a universal constant
$C$ (that depends neither on the initial diagram nor on any of the
other maps mentioned above) such that
\begin{equation} \label{eq:shifts-diag} s_{\varphi'}, s_{\varphi''},
  s_{H'}, s_{H''}, s_{h''} \leq C (s_{f'} + s_{f''} + s_{\phi'} +
  s_{\phi''} + s_{\psi'} + s_{\psi''} + s_{h'} + s_{k'} + s_{k''} +
  s_{r'} + s_{r''}).
  \end{equation}
\end{prop}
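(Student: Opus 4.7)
The plan is to write down explicit formulas for $\varphi'$, $\varphi''$, the homotopy $h''$, and the homotopies $H'$, $H''$ in terms of the given chain maps and homotopies, and then track the filtration shift of each term, verifying that all resulting bounds are linear combinations of the shifts in the hypothesis.

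First I would define $\varphi': C(f', s_{f'}) \to C(f'', s_{f''})$ by the standard recipe for morphisms of mapping cones in the presence of a homotopy-commutative square: for $(a', b') \in A' \oplus B' = C(f', s_{f'})$, set
\[
\varphi'(a', b') := \bigl(\psi'(a'),\; \phi'(b') + h'(a')\bigr).
\]
The chain-map property will follow from $\phi' \circ f' - f'' \circ \psi' = dh' + h'd$. Since an element of filtration level $\leq \alpha$ in $C(f', s_{f'})$ is a pair with $a' \in A'^{\leq \alpha - s_{f'}}$ and $b' \in B'^{\leq \alpha}$, a direct component-wise check will show that $\varphi'$ is $s_{\varphi'}$-filtered for $s_{\varphi'} := \max\{s_{\psi'} + s_{f''} - s_{f'},\, s_{\phi'},\, s_{h'} - s_{f'}\}$, which is bounded by the universal linear combination demanded by~\eqref{eq:shifts-diag}.

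Next I would construct the homotopy $h''$ making the left-hand square in the second diagram of~\eqref{eq:d-cones} commute up to filtered homotopy. Starting from $\phi' \circ f' - f'' \circ \psi' = dh' + h'd$, pre-composing with $\psi''$ and post-composing with $\phi''$, and then using the quasi-inverse relations $\phi' \phi'' - \id = dr'' + r''d$ and $\psi' \psi'' - \id = dk'' + k''d$ together with the fact that $f', f''$ are chain maps, one obtains an explicit expression
\[
\phi'' \circ f'' - f' \circ \psi'' = dh'' + h''d
\]
in which $h''$ is a linear combination of composites of the form $\phi'' \circ h' \circ \psi''$, $r'' \circ f'$, $f' \circ k''$, and $\phi'' \circ f'' \circ k''$, $r'' \circ f'' \circ \psi''$. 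Each composite shifts filtrations by at most a sum of finitely many of the hypothesized $s_{\bullet}$'s, so $s_{h''}$ is bounded by a universal linear combination of them. The map $\varphi''$ is then defined by the analogous formula $(a'', b'') \mapsto (\psi''(a''), \phi''(b'') + h''(a''))$, and its filtration shift is controlled exactly as for $\varphi'$.

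To finish, I would verify $\varphi'' \circ \varphi' \simeq \id$ (and symmetrically $\varphi' \circ \varphi'' \simeq \id$) by direct substitution. The diagonal pieces differ from the identity on $A' \oplus B'$ by $\psi'' \psi' - \id = dk' + k'd$ on the $A'$-factor and $\phi'' \phi' - \id = dr' + r'd$ on the $B'$-factor, while the off-diagonal contribution $\phi'' \circ h'(a') + h''(\psi'(a'))$ can be packaged into an auxiliary map whose exactness follows from the $A_\infty$-style identities relating $h', h'', k', r'$. Collecting these into a single map $H'(a', b') := (k'(a'), r'(b') + E'(a'))$, where $E'$ is the explicit linear combination just mentioned, gives a filtered chain homotopy of shift bounded by a linear combination of the input data; the symmetric construction of $H''$ is identical. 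The main obstacle will be the bookkeeping: each of $h''$, $H'$, $H''$ is a sum of several composites, and one must verify that the \emph{worst} shift appearing among them is still linearly bounded in the hypothesized shifts. Since the formulas are built by a fixed, finite sequence of substitutions and each substitution adds at most a bounded number of $s_{\bullet}$-terms, a universal constant $C$ (depending only on the combinatorial complexity of these formulas, not on the data) will suffice to yield~\eqref{eq:shifts-diag}.
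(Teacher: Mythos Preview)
Your proposal is correct and follows essentially the same approach as the paper: the paper's proof simply declares the construction to be ``standard homological algebra,'' gives the same formula $\varphi'(a',b') = (\psi'(a'), \phi'(b') + h'(a'))$ that you wrote down, and asserts that $h''$, $\varphi''$, $H'$, $H''$ are obtained by explicit formulas involving $k'$, $k''$, $r'$, $r''$, from which the shift bound~\eqref{eq:shifts-diag} follows by inspection. Your write-up is in fact more detailed than the paper's own proof; the only mild caution is that the precise form of $E'$ in your homotopy $H'$ requires a couple of additional correction terms (involving $f' \circ k'$ and $r' \circ f'$) beyond $\phi'' h' + h'' \psi'$, but as you note this is pure bookkeeping and does not affect the universal linear bound.
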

\begin{proof}
  The existence of $\varphi'$, $\varphi''$, $H'$, $H''$ is standard
  homological algebra. In fact, it is straightforward to write down
  explicit formulae for these maps. For example, $\varphi'$ can be
  taken to be $\varphi'(a', b') = (\psi(a'), \phi'(b') + h'(a'))$. One
  then uses the chain homotopies $k'$, $k''$, $r'$, $r''$ to describe
  explicitly $h''$, $\varphi''$ and $H'$, $H''$.

  The only possibly non-standard ingredients are the statements
  concerning the actions shifts and inequality~\eqref{eq:shifts-diag}.
  These can be easily derived from the formulae for $\varphi'$,
  $\varphi''$, $h''$, $H'$, $H''$.
\end{proof}

\begin{rem} \label{r:shift-cones} By deriving explicit formulae for
  $\varphi'$, $\varphi''$, $H'$, $H''$, $h''$ it is possible obtain
  sharper estimates for each of
  $s_{\varphi'}, s_{\varphi''}, s_{H'}, s_{H''}, s_{h''}$ than the
  uniform bound~\eqref{eq:shifts-diag}. In the following we will be
  interested only in coarse estimates on these shifts, hence we will
  not need such sharp estimates.
\end{rem}

\subsection{Iterated cones} \label{sb:icones}
Let $E$, $F$, $G$ be filtered chain complexes and
$f: F \longrightarrow G$ an $s_f$-filtered chain map. Let
$g: E \longrightarrow [F \xrightarrow{\; (f, s_f) \;} G]$ be an
$s_g$-filtered chain map and define
$$C = [E \xrightarrow{\; (g, s_g) \;}
[ F \xrightarrow{\; (f, s_f) \;} G]].$$ There exist a module
homomorphism $g': E \longrightarrow F$ that shifts action by
$\leq s_{g'} := \max\{0, s_g - s_f\}$, and another module homomorphism
$f': [E \xrightarrow{\; (g', s_{g'}) \;} F] \longrightarrow G$ that
shifts action by $\leq s_{f'} := s_f$, such that the module
$$C' = [[ E \xrightarrow{\; (g', s_{g'}) \;} F]
\xrightarrow{\; (f', s_{f'}) \;} G]$$ is isomorphic to $C$ by the map
$C \longrightarrow C'$ induced from the underlying identity
map. Moreover, if $s_g < s_f$ (i.e.~$s_{g'} = 0$) then this map shifts
action by $\leq (s_f-s_g)$ and if $s_g\geq s_f$ (i.e.~$s_{g'}\geq 0$)
it shifts action by $\leq 0$.

\begin{rem}
  The asymmetry in the action shifts comes from our convention to
  consider only non-negative action shifts, i.e.~to regard a map that
  shifts action by a negative amount as shifting action by $\leq
  0$. If we would have allowed for negative action-shifts then we
  could take $s_{g'} = s_g - s_f$ and the identity map
  $C \longrightarrow C'$ would become action preserving. But as
  remarked at the beginning of~\S\ref{subs:filtered-complexes} we will
  stick to the convention that shifts in action are always
  non-negative.
\end{rem}
  
It follows from the above that
\begin{equation} \label{eq:2-cones} |\sigma_{\pm}(C') -
  \sigma_{\pm}(C)| \leq |s_f - s_g|, \quad |\beta(C') - \beta(C)| \leq
  2|s_f-s_g|.
\end{equation}

In the following we will be interested in coarse bounds on spectral
invariants and boundary depths of {\em iterated} cones. Therefore, by
abuse of notation we will often write them as
$K = [ A_r \longrightarrow A_{r-1} \longrightarrow \cdots
\longrightarrow A_1 \longrightarrow A_0]$, whenever the maps are clear
from the context and their action shifts are fixed up to a bounded
change. The spectral invariants and boundary depths of $K$ will then
be determined up to a bounded error.

\subsection{Estimating the spectral range of iterated
  cones} \label{sb:range-icone}

Let $A_{(0)}, \ldots, A_{(k)}$ be a finite collection of filtered
chain complexes of $R$-modules. Assume that each of the $A_{(i)}$'s
has finite spectral range. Define the following values:
\begin{equation} \label{eq:sig-rho-tilde}
  \begin{aligned}
    \widetilde{\sigma}_+(A_{(k)}, \ldots, A_{(0)}) & := \max \{
    \sigma_+(A_{(k)}), \ldots, \sigma_+(A_{(0)}) \}, \\
    \widetilde{\sigma}_-(A_{(k)}, \ldots, A_{(0)}) & := \min \{
    \sigma_-(A_{(k)}), \ldots, \sigma_-(A_{(0)}) \}, \\
    \widetilde{\rho}(A_{(k)}, \ldots, A_{(0)}) & :=
    \widetilde{\sigma}_+(A_{(k)}, \ldots, A_{(0)}) -
    \widetilde{\sigma}_-(A_{(k)}, \ldots, A_{(0)}).
  \end{aligned}
\end{equation}

From inequalities~\eqref{eq:delta-cone}~-~\eqref{eq:est-sprCone}, and
using the notation~\eqref{eq:sig-rho-tilde}, we obtain the following
inequalities for the mapping cone
$C = [A \xrightarrow{\; (f,s) \;} B]$ of an $s$-filtered chain map
$f: A \longrightarrow B$:
\begin{equation} \label{eq:sig-pm-beta}
  \begin{aligned}
    \sigma_+(C) & \leq \phantom{-} \widetilde{\sigma}_+(A,B) +
    \beta(B) + s, \\
    -\sigma_{-}(C) & \leq -\widetilde{\sigma}_{-}(A,B) + \beta(A), \\
    \beta(C) & \leq \beta(A) + \beta(B) + \widetilde{\sigma}_+(A,B) -
    \widetilde{\sigma}_{-}(A,B) + s.
  \end{aligned}
\end{equation}

It follows that both $\rho(C)$ as well as $\beta(C)$ can be bounded
from above by the same expression:
\begin{equation} \label{eq:sig-rho-cone} \rho(C), \beta(C) \leq
  \widetilde{\rho}(A,B) + \beta(A) + \beta(B) + s.
\end{equation}

Turning to the case of iterated cones, let $A_0, \ldots, A_r$ be
filtered chain complexes. Put $C_0 := A_0$. Let
$\varphi_1: A_1 \longrightarrow C_0$ be an $s_1$-filtered chain map
for some $s_1 \geq 0$. Define
$C_1:= [A_1 \xrightarrow{\; (\varphi_1,s_1) \;} C_0]$, filtered as
described in~\eqref{eq:cone-filtration}. Continuing inductively,
assume that we have constructed already the filtered chain complex
$C_i$ for some $1 \leq i \leq r-1$ and let
$\varphi: A_{i+1} \longrightarrow C_i$ be an $s_{i+1}$-filtered chain
map for some $s_{i+1} \geq 0$. Define
$C_{i+1} = [A_{i+1} \xrightarrow{\; (\varphi_{i+1}, s_{i+1}) \;}
C_i]$. We call the final chain complex $C_r$ an iterated cone with
attachments $A_0, \ldots, A_r$ and sometime denote it by
$$C_r = [A_r \longrightarrow [A_{r-1} \longrightarrow \cdots
\longrightarrow [A_2 \longrightarrow [A_1 \longrightarrow A_0]] \cdots
]],$$ omitting references to the chain maps $\varphi_i$ and the
action-shifts $s_i$.

The following Lemma follows easily from~\eqref{eq:sig-pm-beta}.
\begin{lem} \label{l:rho-iterated} There exists (universal) constants
  $a_r, b_r, e_r > 0$, depending only on $r$, such that for every
  iterated cone $C_r$ as above we have:
  \begin{equation} \label{eq:rho-iterated} \rho(C_r) \leq a_r
    \widetilde{\rho}(A_r, \ldots, A_0) + b_r \sum_{j=0}^r \beta(A_j) +
    e_r \sum_{j=1}^r s_j.
  \end{equation}
\end{lem}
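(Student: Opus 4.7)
The plan is to prove~\eqref{eq:rho-iterated} by induction on $r$, using the one-cone estimates~\eqref{eq:sig-pm-beta} at each step. A naive induction relying only on the combined bound~\eqref{eq:sig-rho-cone} does not close, because the quantity $\widetilde{\rho}(A_{i+1}, C_i)$ appearing in the next cone-step is sensitive to the individual values $\sigma_+(C_i)$ and $\sigma_-(C_i)$ rather than just their difference $\rho(C_i)$. I therefore propose to track all three invariants $\sigma_+(C_i)$, $\sigma_-(C_i)$ and $\beta(C_i)$ simultaneously by means of the excess variables
\[
P_i := \max\bigl\{0,\ \sigma_+(C_i) - \widetilde{\sigma}_+(A_0,\ldots,A_i)\bigr\}, \quad Q_i := \max\bigl\{0,\ \widetilde{\sigma}_-(A_0,\ldots,A_i) - \sigma_-(C_i)\bigr\}, \quad R_i := \beta(C_i),
\]
with base case $P_0 = Q_0 = 0$, $R_0 = \beta(A_0)$.

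Applying the three inequalities of~\eqref{eq:sig-pm-beta} to the one-step cone $C_{i+1} = [A_{i+1} \xrightarrow{(\varphi_{i+1}, s_{i+1})} C_i]$, together with the elementary estimate $\widetilde{\rho}(A_{i+1}, C_i) \leq \widetilde{\rho}(A_0,\ldots,A_{i+1}) + P_i + Q_i$ (obtained by bounding $\max\{\sigma_+(A_{i+1}), \sigma_+(C_i)\}$ from above and $\min\{\sigma_-(A_{i+1}), \sigma_-(C_i)\}$ from below via the inductive definitions of $P_i, Q_i$), one reads off the coupled recursion
\begin{align*}
P_{i+1} & \leq P_i + R_i + s_{i+1},\\
Q_{i+1} & \leq Q_i + \beta(A_{i+1}),\\
R_{i+1} & \leq R_i + \beta(A_{i+1}) + \widetilde{\rho}(A_0,\ldots,A_{i+1}) + P_i + Q_i + s_{i+1}.
\end{align*}
Writing $T := \widetilde{\rho}(A_0,\ldots,A_r)$, $B := \sum_{j=0}^r \beta(A_j)$ and $S := \sum_{j=1}^r s_j$, the $Q$-recursion telescopes to $Q_i \leq B$ immediately. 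Setting $U_i := P_i + Q_i + R_i$ and adding the three recursions gives $U_{i+1} \leq 2U_i + T + 2B + 2S$, which solves to $U_i \leq 2^{i+1}(T + B + S)$.

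The conclusion at $i = r$ is then
\[
\rho(C_r) \leq \bigl(\widetilde{\sigma}_+(A_0,\ldots,A_r) + P_r\bigr) - \bigl(\widetilde{\sigma}_-(A_0,\ldots,A_r) - Q_r\bigr) = T + P_r + Q_r \leq T + U_r \leq (1 + 2^{r+1})(T + B + S),
\]
which yields~\eqref{eq:rho-iterated} with, for instance, $a_r = b_r = e_r = 1 + 2^{r+1}$. The only real obstacle is the bookkeeping step of solving the coupled linear recursion above; no further geometric input beyond the single-cone estimates already collected in~\eqref{eq:sig-pm-beta} is required, which justifies the author's remark that the lemma ``follows easily'' from those inequalities.
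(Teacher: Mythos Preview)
Your argument is correct and is exactly the induction from the single-cone estimates~\eqref{eq:sig-pm-beta} that the paper has in mind (the paper gives no further details beyond ``follows easily from~\eqref{eq:sig-pm-beta}''). One small arithmetic slip: the recursion $U_{i+1}\leq 2U_i + T + 2B + 2S$ with $U_0\leq B$ does not quite give $U_i\leq 2^{i+1}(T+B+S)$ --- the inhomogeneous term does not get absorbed --- but it does give, say, $U_i\leq 3\cdot 2^{i}(T+B+S)$ via the substitution $V_i=U_i+(T+2B+2S)$, so the existence of constants $a_r,b_r,e_r$ depending only on $r$ is unaffected.
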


\subsection{Weakly filtered $A_{\infty}$-categories and
  modules} \label{sb:wf-ai-mod} Recall that a weakly filtered
$A_{\infty}$-category $\mathscr{C}$ is an $A_{\infty}$-category such
that for every two objects $X,Y \in \textnormal{Ob}(\mathscr{C})$ the
chain complex $(\hom_{\mathscr{C}}(X,Y), \mu_1^{\mathscr{C}})$ is
filtered and additionally each of the higher order operations
$\mu_d^{\mathscr{C}}$, $d \geq 2$, preserves filtrations up to a
(uniform) bounded error. Similarly, filtered modules $\mathcal{M}$
over such categories are $\mathscr{C}$-modules such that for every
object $X \in \textnormal{Ob}(\mathscr{C})$ the chain complex
$(\mathcal{M}(X), \mu_1^{\mathcal{M}})$ is filtered, and the higher
order operations $\mu_d^{\mathcal{M}}$, $d \geq 2$, preserve
filtrations up to (uniform) bounded errors (one for each $d$). One can
define weakly filtered pre-module (resp. module) homomorphisms
$f: \mathcal{M} \longrightarrow \mathcal{N}$ between weakly filtered
modules, by analogy to filtered maps (resp. chain maps). The 1'st
order component $f_1: \mathcal{M}(X) \longrightarrow \mathcal{N}(X)$,
$X \in \textnormal{Ob}(\mathscr{C})$, of such a map is a filtered
linear map (resp. chain map) that shifts filtrations by $\leq s_f$,
where $s_f$ is a constant that does not depend on $X$. An analogous
condition is imposed on the higher order $f_d$ components of $f$.
(Sometimes, by abuse of notation we will omit the the subscript in
$f_1$ and denote the 1'st order component also by $f$.)  Finally,
there is also the notion of weakly filtered $A_{\infty}$-functors
between weakly filtered $A_{\infty}$-categories (in contrast to module
homomorphisms which are allowed to shift filtrations, such functors
are assumed to preserve filtrations, up to bounded errors). We refer
the reader to~\cite{Bi-Co-Sh:lshadows-long} for the basic theory and
formalism of weakly filtered $A_{\infty}$-categories.

\begin{rem} \label{r:wf} A word of caution about terminology
  differences is in order. The notion ``weakly filtered'' appears in
  the literature with two different meanings. In the formalism
  of~\cite{FO3:book-vol1, FO3:book-vol2} ``weakly filtered map''
  stands for a map between filtered chain complexes (or
  $A_{\infty}$-algebras) that preserves filtrations up to a shift,
  whereas in our terminology such maps are called ``filtered'' or
  $s$-filtered if we specify the amount of shift $s$. Our notion of
  ``weakly filtered'' means something else. For example, in the case
  of weakly filtered categories, the 1'st order operations (i.e.~the
  differentials of the $\hom$'s) preserve filtrations, but the higher
  order operations preserve filtrations only up to uniform errors
  (which we call in~\cite{Bi-Co-Sh:lshadows-long} discrepancies), and
  the wording ``weakly'' refers to that. Thus, without these
  discrepancies we would have called such categories ``filtered
  categories''. In a similar vein we have weakly filtered functors,
  modules and (pre)-module homomorphisms.
\end{rem}

The contents of the entire section above
(\S\ref{subs:filtered-complexes}~-~\S\ref{sb:range-icone}) applies
with minor modifications also to the framework of weakly filtered
$A_{\infty}$-modules over a weakly filtered $A_{\infty}$-category
$\mathscr{C}$ rather than just chain complexes. For example, if one
replaces the filtered chain complexes $A$, $B$ by weakly filtered
$\mathscr{C}$-modules $\mathcal{A}$, $\mathcal{B}$ and
$f: A \longrightarrow B$ by a module homomorphism, then one can define
an $A_{\infty}$-mapping cone module
$\mathcal{C} = [\mathcal{A} \xrightarrow{\; f \;} \mathcal{B}]$ which
is weakly filtered in a similar way as
in~\eqref{eq:cone-filtration}. (See~\cite[\S2.4]{Bi-Co-Sh:lshadows-long}
for more details.) The inequalities from~\eqref{eq:cone-C-C'} then
continue to hold with $C'$ and $C$ replaced by $\mathcal{C}'(X)$ and
$\mathcal{C}(X)$ respectively, for every object $X$ in the underlying
$A_{\infty}$-category $\mathscr{C}$. Similar modifications apply
to~\eqref{eq:2-cones} as well as to~\eqref{eq:he}.

It is important to note that in the case of $A_{\infty}$-modules the
preceding inequalities hold uniformly for all objects $X$, since the
shift parameters ($s_f$, $s_g$ etc.) depend only on the modules and
the homomorphisms between them, and not on the choice of a particular
object in the $A_{\infty}$-category.

\begin{rem} \label{r:ch-vs-ai} Through this paper we appeal several
  times to the notions of weakly filtered $A_{\infty}$-categories,
  functors and modules. However, from a purely formal viewpoint this
  is not really necessary. Indeed, we will never use any of the higher
  operations associated to $A_{\infty}$-structures or to special
  features that distinguish such structures from filtered chain
  complexes. Thus in principle one can ``downgrade'' the entire
  algebraic formalism in this paper to filtered chain complexes and
  their persistent homology. The reason we opted for using a bit of
  $A_{\infty}$ formalism is the following. A considerable part of the
  algebra in this paper is devoted to establishing bounds on
  invariants of filtered Floer chain complexes, e.g.~of the type
  $CF(-,-)$, which are uniform in the ``variables'' $(-,-)$, or at
  least one of them. These variables are Lagrangian submanifolds,
  hence are objects of a Fukaya category (which is weakly
  filtered). As explained at several points above, the uniformity of
  various quantities related to action filtration can be more
  concisely expressed using the language of $A_{\infty}$-modules.
\end{rem}

We end this section with a useful definition.
\begin{dfn} \label{d:dit-mod} Let $\mathcal{M}$, $\mathcal{N}$ be two
  weakly filtered $A_{\infty}$-modules. Let
  $f: \mathcal{M} \longrightarrow \mathcal{N}$ be a weakly filtered
  module homomorphism and $w \geq 0$. We say that $f$ is a
  quasi-isomorphism of weight $\leq w$ if the following holds:
  \begin{enumerate}
  \item $f$ shifts filtration by $\leq w$.
  \item There exists a weakly filtered module homomorphism
    $g: \mathcal{N} \longrightarrow \mathcal{M}$ that shifts
    filtration by $\leq w$ and two weakly filtered pre-module
    homomorphisms $h: \mathcal{M} \longrightarrow \mathcal{M}$,
    $k: \mathcal{N} \longrightarrow \mathcal{N}$ that shift
    filtrations by $\leq w$, such that:
    \begin{equation} \label{eq:fg-w} g \circ f = \id +
      \mu_1^{\text{mod}}(h), \quad f \circ g = \id +
      \mu_1^{\text{mod}}(k).
    \end{equation}
  \end{enumerate}
  We say that two weakly filtered modules $\mathcal{M}$ and
  $\mathcal{N}$ are at distance $w$ one from the other if there exists
  a quasi-isomorphism $f: \mathcal{M} \longrightarrow \mathcal{N}$ of
  weight $\leq w$.
\end{dfn}

\ocnote{
\begin{rem}\label{rem:interleaving-dist} Similar notions appear in relation to the so-called 
bottleneck distance in persistance module theory, for instance in \cite{Usher-Zhang:perh}, 
as well as in a somewhat different context in \cite{Bi-Co-Sh:lshadows-long}.
\end{rem}
}

The same definition can be easily adapted to the case when
$\mathcal{M}$ and $\mathcal{N}$ are just filtered chain complexes and
$f: \mathcal{M} \longrightarrow \mathcal{N}$ is a $w$-filtered chain
map. In this case, the analogue of condition~\eqref{eq:fg-w} simply
means that $f\circ g$ and $g \circ f$ are chain homotopic to the
respective identities via $w$-filtered chain homotopies. Note that
despite being called only a ``quasi-isomorphism'', $f$ satisfies a
stronger condition - it is implicitly assumed to have a homotopy
inverse.

\subsection{Spectral range and boundary depth of tensor products}
\label{sb:tensor-prod}

Let $A$, $B$ be finite dimensional filtered chain complexes over a
field $R$. The tensor product (over $R$) chain complex $A \otimes B$
inherits a filtration from $A$ and $B$, where
$(A \otimes B)^{\leq \alpha} \subset A \otimes B$ is generated by the
collection of subspaces $A^{\leq \alpha - s} \otimes B^{\leq s}$,
$s \in \mathbb{R}$.

\begin{prop} \label{p:sig-beta-tensor} For the tensor product chain
  complex $A \otimes B$ we have:
  \begin{equation*}
    \begin{aligned}
      & \sigma_{\pm}(A \otimes B) = \sigma_{\pm}(A) + \sigma_{\pm}(B),
      \quad \rho(A \otimes B) = \rho(A) + \rho(B), \\
      & \beta(A \otimes B) \leq \max \{ \beta(A), \beta(B) \}.
    \end{aligned}
  \end{equation*}
\end{prop}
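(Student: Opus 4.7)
The plan is to reduce everything to the barcode (normal-form) decomposition of finite-dimensional filtered chain complexes over a field. Since $R$ is a field, both $A$ and $B$ decompose canonically as filtered chain complexes into direct sums of elementary interval summands of two types: \emph{infinite} summands $I_a = R\langle e \rangle$ concentrated at filtration level $a$ with $de = 0$, contributing a bar $[a,\infty)$; and \emph{finite} summands $F_{a,L} = R\langle b,c \rangle$ with $b$ at level $a$, $c$ at level $a+L$, and $dc = b$, contributing a bar $[a,a+L)$ of length $L$. In this language, $\sigma_+(A)$ (resp.~$\sigma_-(A)$) is the maximum (resp.~minimum) starting level of an infinite summand of $A$, and $\beta(A)$ is the maximum length of a finite summand.

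First, I would observe that the tensor filtration is additive over direct sums, so writing $A = \bigoplus_i A_i$ and $B = \bigoplus_j B_j$ into elementary summands yields
\begin{equation*}
A \otimes B \;=\; \bigoplus_{i,j} A_i \otimes B_j
\end{equation*}
as filtered chain complexes. Next I would compute the barcode of each elementary tensor in three cases:
\textbf{(a)} $I_a \otimes I_b$ is the one-dimensional cycle $I_{a+b}$.
\textbf{(b)} $I_a \otimes F_{b,L}$ has generators $e \otimes b, e \otimes c$ at levels $a+b, a+b+L$ with $d(e \otimes c) = e \otimes b$, hence equals $F_{a+b, L}$.
\textbf{(c)} $F_{a,M} \otimes F_{b,L}$ is a $4$-dimensional acyclic complex with generators at levels $a+b$, $a+b+L$, $a+b+M$, $a+b+L+M$ (assume WLOG $L \leq M$). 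A direct ``reduced barcode'' computation, tracking which cycles become boundaries as the filtration grows, shows that it decomposes as $F_{a+b,\,L} \oplus F_{a+b+M,\,L}$; both finite bars have length $\min(L,M)$.

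From this decomposition the three assertions follow immediately. The infinite summands of $A \otimes B$ come only from case (a), bijectively from pairs of infinite summands of $A$ and $B$, with starting levels summing; thus
\begin{equation*}
\sigma_+(A \otimes B) = \sigma_+(A) + \sigma_+(B), \qquad \sigma_-(A \otimes B) = \sigma_-(A) + \sigma_-(B),
\end{equation*}
whence $\rho(A \otimes B) = \rho(A) + \rho(B)$. The finite summands come from cases (b) and (c): case (b) contributes bars of length $\leq \beta(B)$ (or $\leq \beta(A)$ if the roles are swapped), while case (c) contributes bars of length $\min(L,M) \leq \max\{L,M\} \leq \max\{\beta(A),\beta(B)\}$. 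Hence $\beta(A \otimes B) \leq \max\{\beta(A),\beta(B)\}$.

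The main technical point is case (c): one must verify that the tensor of two finite bars splits off into two finite bars of length $\min(L,M)$ rather than, say, a bar of length $L+M$. This is a straightforward basis change, but it is the step where the bound $\beta(A \otimes B) \leq \max\{\beta(A),\beta(B)\}$ (with ``$\max$'' rather than ``$\beta(A)+\beta(B)$'') really uses that we are working over a field and in the barcode normal form; without this cancellation the naive estimate would be much weaker.
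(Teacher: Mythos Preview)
Your argument is correct and is essentially the same approach as the paper's: the paper simply cites the K\"{u}nneth formula for persistence modules from~\cite{Po-Sh-St:pers-op}, whose content is precisely the elementary-summand computation you carry out by hand in cases (a)--(c). In particular your case~(c) computation, that $F_{a,M}\otimes F_{b,L}$ has barcode $[a+b,\,a+b+\min(L,M))\cup[a+b+\max(L,M),\,a+b+L+M)$, is exactly what that K\"{u}nneth theorem gives, so nothing is missing.
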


\begin{proof}
  This follows by direct calculation of the barcode of the persistence
  module $H_*(A \otimes B)$, using the K\"{u}nneth formula for
  persistence modules from~\cite{Po-Sh-St:pers-op}.
\end{proof}


\bibliography{bibliography}

%
%
%

\end{document}